\theoremstyle{plain}
\newtheorem{thrm}{Theorem}[section]
\newtheorem{definition}{Definition}
\newtheorem{lemma}[thrm]{Lemma}
\newtheorem{prop}[thrm]{Proposition}
\newtheorem{rmrk}[thrm]{Remark}
\begin{document}
\newcommand{\Q}{\mathbb Q}
\newcommand{\B}{\mathbb B}
\newcommand{\sn}{\mathbb{S}^{n-1}}
\newcommand{\SL}{\mathcal L^{1,p}( D)}
\newcommand{\Lp}{L^p( Dega)}
\newcommand{\La}{\mathscr{L}_a}
\newcommand{\CO}{C^\infty_0( \Omega)}
\newcommand{\Rn}{\mathbb R^n}
\newcommand{\Rm}{\mathbb R^m}
\newcommand{\R}{\mathbb R}
\newcommand{\Om}{\Omega}
\newcommand{\Hn}{\mathbb H^n}
\newcommand{\aB}{\alpha B}
\newcommand{\eps}{\ve}
\newcommand{\BVX}{BV_X(\Omega)}
\newcommand{\p}{\partial}
\newcommand{\IO}{\int_\Omega}
\newcommand{\bG}{\boldsymbol{G}}
\newcommand{\bg}{\mathfrak g}
\newcommand{\bz}{\mathfrak z}
\newcommand{\bv}{\mathfrak v}
\newcommand{\Bux}{\mbox{Box}}
\newcommand{\e}{\ve}
\newcommand{\X}{\mathcal X}
\newcommand{\Y}{\mathcal Y}
\newcommand{\W}{\mathcal W}
\newcommand{\la}{\lambda}
\newcommand{\vf}{\varphi}
\newcommand{\rhh}{|\nabla_H \rho|}
\newcommand{\Ba}{\mathcal{B}_\beta}
\newcommand{\Za}{Z_\beta}
\newcommand{\ra}{\rho_\beta}
\newcommand{\n}{\nabla}
\newcommand{\vt}{\vartheta}
\newcommand{\its}{\int_{\{y=0\}}}
\newcommand{\Div}{\operatorname{div}}

\numberwithin{equation}{section}

\newcommand{\RN} {\mathbb{R}^N}
\newcommand{\Sob}{S^{1,p}(\Omega)}
\newcommand{\Dxk}{\frac{\partial}{\partial x_k}}
\newcommand{\Co}{C^\infty_0(\Omega)}
\newcommand{\py}{\text{lim}_{y \to 0} y^a \partial_ y}
\newcommand{\Je}{J_\ve}
\newcommand{\beq}{\begin{equation}}
\newcommand{\bea}[1]{\begin{array}{#1} }
\newcommand{\eeq}{ \end{equation}}
\newcommand{\ea}{ \end{array}}
\newcommand{\eh}{\ve h}
\newcommand{\Dxi}{\frac{\partial}{\partial x_{i}}}
\newcommand{\Dyi}{\frac{\partial}{\partial y_{i}}}
\newcommand{\Dt}{\frac{\partial}{\partial t}}
\newcommand{\aBa}{(\alpha+1)B}
\newcommand{\GF}{\psi^{1+\frac{1}{2\alpha}}}
\newcommand{\GS}{\psi^{\frac12}}
\newcommand{\HFF}{\frac{\psi}{\rho}}
\newcommand{\HSS}{\frac{\psi}{\rho}}
\newcommand{\HFS}{\rho\psi^{\frac12-\frac{1}{2\alpha}}}
\newcommand{\HSF}{\frac{\psi^{\frac32+\frac{1}{2\alpha}}}{\rho}}
\newcommand{\AF}{\rho}
\newcommand{\AR}{\rho{\psi}^{\frac{1}{2}+\frac{1}{2\alpha}}}
\newcommand{\PF}{\alpha\frac{\psi}{|x|}}
\newcommand{\PS}{\alpha\frac{\psi}{\rho}}
\newcommand{\ds}{\displaystyle}
\newcommand{\Zt}{{\mathcal Z}^{t}}
\newcommand{\XPSI}{2\alpha\psi \begin{pmatrix} \frac{x}{\left< x \right>^2}\\ 0 \end{pmatrix} - 2\alpha\frac{{\psi}^2}{\rho^2}\begin{pmatrix} x \\ (\alpha +1)|x|^{-\alpha}y \end{pmatrix}}
\newcommand{\Z}{ \begin{pmatrix} x \\ (\alpha + 1)|x|^{-\alpha}y \end{pmatrix} }
\newcommand{\ZZ}{ \begin{pmatrix} xx^{t} & (\alpha + 1)|x|^{-\alpha}x y^{t}\\
     (\alpha + 1)|x|^{-\alpha}x^{t} y &   (\alpha + 1)^2  |x|^{-2\alpha}yy^{t}\end{pmatrix}}
\newcommand{\norm}[1]{\lVert#1 \rVert}
\newcommand{\ve}{\varepsilon}
\newcommand{\D}{\operatorname{div}}
\newcommand{\G}{\mathscr{G}}

\newcommand{\tn}{\textnormal}
\newcommand{\mc}{\mathcal}
\newcommand{\mb}{\mathbb}
\newcommand{\ld}{\lambda}
\newcommand{\dd}{\partial}
\newcommand{\f}{\frac}
\newcommand{\nn}{\notag}
\newcommand{\sg}{\sigma}
\newcommand{\qd}{\quad}
\newcommand{\iy}{\infty}
\def\bu{\mathbf{u}}
\def\D{\tn{d}}
\def\bU{\mathbf{U}}
\def\Rb{\mb{R}}
\def\Lb{\mb{L}}
\def\Tbu{\tilde{\bu}}
\def\DTbu{\tn{div}\Tbu}
\def\bW{\mathbf{W}}
\def\Ib{\mathbb{I}}
\def\bP{\mathbf{P}}

\title[]{Extension problem for the  Fractional parabolic  Lam\'e operator and unique continuation}

\author{Agnid Banerjee}
\address{Tata Institute of Fundamental Research\\
Centre For Applicable Mathematics \\ Bangalore-560065, India}\email[Agnid Banerjee]{agnidban@gmail.com}

\author{Soumen Senapati}
\address{Tata Institute of Fundamental Research\\
Centre For Applicable Mathematics \\ Bangalore-560065, India}\email[Soumen Senapati]{soumen@tifrbng.res.in}

\thanks{A.B is supported in part by  Department of Atomic Energy,  Government of India, under
project no.  12-R \& D-TFR-5.01-0520.}

%
%
%
\keywords{}
\subjclass{35A02, 35B60, 35K05}

\maketitle
\begin{abstract}
In this paper, we introduce and analyse an explicit formulation of fractional powers of the parabolic Lam\'e operator $\mb{H}$ ( see \eqref{lameop1} below) and we then study the extension problem associated to such non-local operators. We also study the  various regularity properties of solutions to such an extension problem via   a  transformation   as in \cite{Gur, AITY, EINT, Alessandrini_Morassi_2001}, which reduces  the extension problem for the parabolic Lam\'e operator to another system that resembles  the extension problem   for the fractional heat operator. Finally in the case when $s \geq 1/2$,  by proving a  conditional doubling property for solutions to the corresponding reduced system  followed by a blowup argument,  we establish a  space-like strong unique continuation result  for $\mb{H}^s \bu=V\bu$.
\end{abstract}

\tableofcontents

\section{Introduction and statement of the main results} 

Let $\mu, \ld$ be two constants satisfying the following non-degeneracy condition for some $\delta_0>0$

\begin{align}\label{lgo}
    \mu \ge   \delta_0 \tn{ and}\ 2\mu+\ld \ge  \delta_0.
\end{align}

Corresponding to such constants,  we consider the isotropic Lam\'e operator in $\Rb^n,  n\ge 1$, defined by 
\begin{align}\label{Lame op}
    \mb{L}\bu =  \mu\Delta \bu + (\mu+\ld)\nabla \tn{div}\bu.
\end{align}
where $\bu:\mathbb R^n \to \R^n$ and is say  twice continuously differentiable.

Here $\mu$ and $\lambda$ are referred to as Lam\'e parameters. The Lam\'e operator appears quite prominently in the theory of elasticity ( see \cite{Val}). Quite interestingly, this operator appears  quite distinctly in the  Signorini problem of elasticity ( see \cite{Sig1, Sig2}). We refer to the  works \cite{An, Sch, RS}  for various regularity results for the  Signorini problem of elasticity  and other references can be found therein. 

In this work, for the parabolic Lam\'e operator \begin{equation}\label{lameop1}\mb{H} := \dd_t - \mb{L},\end{equation} we are interested in studying the fractional powers  $\mb{H}^s$ for $s \in (0,1)$. The parabolic Lam\'e operator has been  investigated for instance  in  \cite{Sh} where the  Dirichlet problem has been studied in Lipschitz domains.  Our goal is to give  a meaning framework of  an appropriate  extension problem for $\mb{H}^s$  in the spirit of \cite{Caffarelli_Sylvestre_2007, NS, ST}. We refer to  the recent work \cite{Sc} where the extension problem for the  time-independent fractional operator $(-\mb{L})^s$ has been extensively studied. We  also refer to the introduction in \cite{Sc} where various possible applications of such nonlocal systems in the theory  of continuum mechanics has been discussed.

Our main result  concerning the extension problem for $\mb{H}^s$ is as follows. See Section \ref{s:n} for the relevant notations.

\subsection{Main results}
\begin{thrm}\label{energy1}
Let $\bu\in H^{s}$ and $\Tbu$ be defined by
\begin{align}\label{defn of soln}
    \Tbu(x,y,t) :=  & \int_0^\iy \int_{\Rb^n} \bP^{(a)}_y(x,z,\tau) \bu(z,t-\tau) \D z\D \tau,
\end{align}
where $\bP^{(a)}_y$ is as in \eqref{Poisson kernel}.
Then $\Tbu$ solves the Dirichlet problem \eqref{solving extension problem} and satisfies 
\begin{align}
    \label{initial data} & \left\|\Tbu(\cdot,y,\cdot)-\bu(\cdot,\cdot)\right\|_{H^s} \rightarrow 0, \ \tn{ as } y\to 0+, \\
    \label{Neumann data} & \left\|\f{2^{2s-1}\Gamma(s)}{\Gamma(1-s)}y^{a}\dd_y \Tbu + \mb{H}^{s}\bu\right\|_{H^{-s}} \rightarrow 0, \ \tn{ as } y\to 0+, 
\end{align}
Furthermore for a fixed $M>0$, we have
\begin{align}   
    \label{energy} & \int_0^{M}\int_{\Rb^{n+1}} y^a |\Tbu|^2 \D x\D t\D y \preceq   M^{1+a}\|\bu\|^2_{L^2(\Rb^{n+1})}, \qd \int_0^\iy \int_{\Rb^{n+1}} y^a |\nabla_{x,y}\Tbu|^2 \D x\D t\D y \preceq \|\bu\|_{H^{s}}.
\end{align}
\end{thrm}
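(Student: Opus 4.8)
The plan is to establish Theorem~\ref{energy1} by treating the extension problem as a Bochner-type vector-valued heat problem with a degenerate weight $y^a$, $a = 1-2s$, and to reduce everything to the well-understood scalar fractional heat case by exploiting the structure of the Poisson kernel $\bP^{(a)}_y$. First I would verify, by a direct computation using the definition \eqref{defn of soln} and the fact that $\bP^{(a)}_y$ is built (entrywise, or after a suitable algebraic transformation as hinted at with \cite{Gur, AITY, EINT, Alessandrini_Morassi_2001}) from the scalar caloric Poisson kernel, that $\Tbu$ solves the Dirichlet problem \eqref{solving extension problem}; this is a matter of differentiating under the integral sign and invoking the PDE satisfied by the kernel, together with the semigroup/subordination identity that relates $y^a \dd_y \bP^{(a)}_y$ to $\mb{H}^s$ applied to the heat propagator. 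The decoupling into components (or into the transformed system resembling the fractional heat extension) is what makes the Lam\'e structure harmless here: once decoupled, each scalar piece is covered by the Nystr\"om--Stinga--Torrea / Stinga--Torrea machinery.

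Next, for the boundary behaviour \eqref{initial data} and \eqref{Neumann data}, I would work on the Fourier side in $(x,t)$ — i.e. take the spatial Fourier transform and the partial Fourier transform in time — so that $\Tbu$ becomes a scalar-ish function of $y$ solving an ODE whose solution is an explicit Bessel-type profile; the symbol of $\mb{H}$ is a positive-definite matrix under \eqref{lgo}, and its fractional power is the matrix whose eigenvalues are the fractional powers of the eigenvalues of the symbol, so the Plancherel identity converts $H^s$ and $H^{-s}$ norms into weighted $L^2$ norms of these profiles. Then \eqref{initial data} follows from $\lim_{y\to 0+}$ of the Bessel profile being the identity (dominated convergence against the $H^s$ weight), and \eqref{Neumann data} follows from the asymptotic expansion of that profile near $y=0$, precisely the normalization constant $\frac{2^{2s-1}\Gamma(s)}{\Gamma(1-s)}$ appearing as the coefficient of the $y^a$-term, again with dominated convergence controlled by $\|\bu\|_{H^s}$. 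This is the standard extension-problem computation; the only care needed is that the constants and the matrix symbol behave well, which follows from \eqref{lgo}.

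For the energy estimates \eqref{energy}, I would again pass to the Fourier variables. The second (global) estimate reduces, via Plancherel and Fubini, to the one-dimensional weighted integral $\int_0^\infty y^a\bigl(|\dd_y \phi|^2 + |\xi|^2|\phi|^2\bigr)\,\D y$ where $\phi=\phi(y,\xi,\cdot)$ is the Bessel profile for the relevant eigenvalue; this integral is explicitly computable (or bounded) and comes out proportional to (eigenvalue of the symbol)$^s$ times $|\widehat{\bu}|^2$, whose integral is exactly $\|\bu\|_{H^s}$. For the first (local, up to height $M$) estimate, one uses instead the crude bound $|\phi(y,\xi,\cdot)| \le |\widehat{\bu}|$ uniformly in $y$ (the profile is a contraction, being an average of the heat semigroup), so $\int_0^M\int y^a|\Tbu|^2 \preceq \bigl(\int_0^M y^a\,\D y\bigr)\|\bu\|_{L^2}^2 = \frac{M^{1+a}}{1+a}\|\bu\|_{L^2}^2$; here one must know a priori that $\widehat{\bu}\in L^2$, i.e. $\bu\in L^2(\Rb^{n+1})$, which is part of the hypothesis $\bu\in H^s$.

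The main obstacle I anticipate is not any single estimate but the bookkeeping of the matrix/vector structure: making sure that the transformation reducing $\mb{H}^s$ to a system resembling the fractional heat operator is compatible with the Poisson-kernel representation \eqref{defn of soln}, so that the scalar computations above apply entrywise. Concretely, one needs the symbol of $\mb{L}$ to be simultaneously diagonalizable in a $\xi$-dependent orthonormal frame (the Helmholtz-type split into the $\widehat\xi$ direction with eigenvalue $-(2\mu+\ld)|\xi|^2$ and its orthogonal complement with eigenvalue $-\mu|\xi|^2$), and one must check that $\bP^{(a)}_y$ respects this split so that the Bessel profiles are genuinely scalar along each eigendirection. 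Once that decoupling is pinned down — which is exactly what the cited transformation accomplishes — the rest is the classical Caffarelli--Silvestre / Nystr\"om--Stinga--Torrea argument applied mode by mode, and the constant $\frac{2^{2s-1}\Gamma(s)}{\Gamma(1-s)}$ together with the power $M^{1+a}$ emerge from the standard Gamma-function identities for the Bessel/Poisson kernel normalization.
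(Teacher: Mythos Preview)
Your proposal is correct and follows essentially the same route as the paper: verify the PDE by differentiating under the integral, pass to the Fourier side in $(x,t)$, use the Helmholtz-type splitting via the idempotent projection $\frac{\xi\otimes\xi}{|\xi|^2}$ to reduce to scalar Bessel/Macdonald profiles, and then read off \eqref{initial data}, \eqref{Neumann data} from the small-argument asymptotics of $\mc{K}_s$ and \eqref{energy} from Plancherel together with one-dimensional weighted Bessel integrals. One clarification: the transformation from \cite{Gur, AITY, EINT, Alessandrini_Morassi_2001} is \emph{not} needed for this theorem---the paper carries out the decoupling directly on the Fourier side via \eqref{identity} and \eqref{frac1}, and reserves that transformation for the regularity theory in Section~\ref{rl}; so your ``main obstacle'' is in fact already resolved by the Helmholtz split you describe.
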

Theorem \ref{energy1} constitutes the parabolic counterpart of \cite[Theorem 8.5]{Sc}. We also mention over here that the extension problem for the fractional heat operator has been  independently developed in \cite{NS} and \cite{ST}.

We then study  the  qualitative properties of solutions to nonlocal equations of the type
\begin{equation}\label{maine}
\mb{H}^s \bu= V\bu,
\end{equation}
via such an  extension problem, where $V:\mathbb R^n \to \mathbb R$ is a scalar potential.  The various  regularity properties of solutions to the extension problem are obtained via a transformation inspired by \cite{Gur, AITY, EINT, Alessandrini_Morassi_2001}  which reduces the extension problem for $\mb{H}^s \bu= V\bu$  to a system of the following type 
\begin{equation}\label{reduced}
\begin{cases}
  y^a\dd_t {\bU}^* - \Div \left(y^a \nabla_{x,y}\bU^* \right) - y^a B \nabla {\bU^*} = 0\ \text{ $y>0$},
  \\   
   \py \bU^*=  \tilde V \bU^* \text{at $\{y=0\}$}.
  \end{cases}
\end{equation}
where $\tilde V$ is a matrix valued potential of the type  \eqref{matr} below which involves the first derivatives of $V$.

Now due to asymmetric nature of the potential matrix $\tilde V$ as in \eqref{matr},  unlike that in  scalar valued case as in \cite{ABDG, BG} where space-like and space-time strong unique continuation results  have been obtained for non-local  equations of the type
\[
(\partial_t - \Delta)^s= Vu,
\]
it is not obvious as to how   the Poon type frequency function approach as in \cite{Po} can be adapted  to obtain   conditional doubling properties or non-degeneracy estimates for solutions to the corresponding extension problem which then facilitates a blowup argument  that leads to the desired unique continuation property. This is a somewhat delicate aspect and we refer to subsection \ref{furred} for a further discussion on such an obstruction. However in the case when $s \geq 1/2$ and where $V$ satisfies  a  $C^{3}$ regularity assumption as in \eqref{vassump} below,  via a transformation  as in \eqref{W},  it turns out that one can get rid  of  the Neumann datum and then apply the methods in \cite{ABDG} to obtain the space-like strong unique continuation result that is stated in Theorem \ref{main} below.  Over here, we cannot stress enough the fact that although  the matrix potential $\tilde V$  of the reduced system \eqref{reduced} is not symmetric, nevertheless quite remarkably, it  has a specific structure that ensures that it commutes with its derivative matrices. This plays a key role in some of the  computations in  \eqref{W}-\eqref{calc2} below that is used to assert that the transformed function $W$ in \eqref{W} satisfies a differential inequality of the type \eqref{weq1}. It is to be noted that the precise structure of $\tilde V$ that we  obtain in \eqref{matr}  is due to the nature of the  reduction scheme that is employed to reduce the Lam\'e extension problem  to a fractional heat type extension system. This is indeed a new feature in the vectorial case and it also indicates   that  the question of strong unique continuation for nonlocal systems in general  is somewhat more subtle than for nonlocal scalar  equations.

We now state our unique continuation result. \begin{thrm}\label{main}
Let $s \in [1/2, 1)$ and $\bu: \R^n \to \R^n \in H^{s}$ be a solution to \eqref{maine}  in $B_1 \times (-1, 0]$ where the potential $V$ satisfies the growth assumption in \eqref{vassump} below. Then if $\bu$ vanishes to infinite order at $(0,0)$ in the following sense
\begin{equation}\label{vp}
\int_{B_r \times (-r^2, 0]} |\bu|^2 = O(r^k)\ \text{as $r \to 0$},
\end{equation}
for all $k \in \mathbb N$, 
then $\bu(\cdot, 0) \equiv 0$. 

\end{thrm}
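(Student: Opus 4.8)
The plan is to combine the extension-problem framework of Theorem~\ref{energy1} with the reduction scheme that leads to the system~\eqref{reduced}, and then to run a frequency-function/blowup argument in the spirit of \cite{ABDG, Po}. First I would apply the construction of Theorem~\ref{energy1} to the solution $\bu$ of \eqref{maine}, so that the extension $\Tbu$ solves $y^a\dd_t\Tbu - \Div(y^a\nabla_{x,y}\Tbu)=0$ for $y>0$ with Neumann condition $\tfrac{2^{2s-1}\Gamma(s)}{\Gamma(1-s)}\py\Tbu = -\mb{H}^s\bu = -V\bu$ at $\{y=0\}$. Next, via the transformation inspired by \cite{Gur, AITY, EINT, Alessandrini_Morassi_2001} I would pass to $\bU^*$ solving the reduced system~\eqref{reduced}, where the matrix potential $\tilde V$ of the form~\eqref{matr} involves the first derivatives of $V$; here the hypothesis $V\in C^3$ of~\eqref{vassump} is exactly what makes $\tilde V$ well-behaved (roughly $C^2$ with the required growth). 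The vanishing hypothesis~\eqref{vp} on $\bu$ should transfer, after controlling the Poisson kernel and using the energy bounds~\eqref{energy}, into an analogous infinite-order vanishing of $\bU^*$ at $(0,0)$ in the weighted $L^2(y^a)$ sense on parabolic cylinders.

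The key step is then to eliminate the Neumann datum. Using the hypothesis $s\geq 1/2$ (so that $a=1-2s\leq 0$) and the transformation~\eqref{W}, I would set $W := $ (the indicated multiplicative modification of $\bU^*$ by a matrix built from $\tilde V$), exploiting crucially the fact emphasized in the introduction that, although $\tilde V$ is not symmetric, it commutes with its own derivative matrices. That commutation is what allows the computations~\eqref{W}--\eqref{calc2} to go through and produces a function $W$ satisfying a homogeneous-type differential inequality~\eqref{weq1} of the form $|y^a\dd_t W - \Div(y^a\nabla_{x,y}W)| \leq C(|y^a\nabla_{x,y}W| + |y^a W|)$ on $y>0$ together with a \emph{vanishing} (or purely Robin-with-good-sign) condition at $\{y=0\}$, i.e. the potential has been absorbed. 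At this point $W$ is a solution, up to lower-order terms, of a degenerate parabolic equation with an $A_2$-Muckenhoupt weight $y^a$, of exactly the type treated in \cite{ABDG}.

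From here I would invoke the machinery of \cite{ABDG}: establish a Poon-type generalized frequency function $N(r)$ for $W$ (built from Almgren-type height and energy quantities against the Gaussian, adapted to the weight $y^a$ and to the differential inequality~\eqref{weq1}), prove it is monotone up to a multiplicative error $e^{Cr}$ using the Rellich-Nečas/Pohozaev identity appropriate to the weight, and deduce the \emph{conditional doubling inequality} for $W$ on backward parabolic cylinders. The infinite-order vanishing~\eqref{vp}, transferred to $W$, forces $N(0^+)=+\infty$, which the doubling inequality rules out unless $W\equiv 0$ near $(0,0)$ in $\{y>0\}$; alternatively, one performs the Almgren-type blowup: the rescaled functions $W_r$ converge to a nonzero backward-parabolic global solution that is homogeneous of infinite order, a contradiction. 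Unwinding the transformations (first $W\mapsto\bU^*$ via the invertible matrix factor, then $\bU^*\mapsto\Tbu$, then the trace $y\to 0$) yields $\bu(\cdot,0)\equiv 0$ on a neighborhood of the origin, and a standard connectedness/propagation-of-smallness step upgrades this to $\bu(\cdot,0)\equiv 0$ as claimed.

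The main obstacle I expect is the passage through the transformation~\eqref{W} and the verification of~\eqref{weq1}: one must check that the commutativity structure of $\tilde V$ genuinely survives differentiation in both $t$ and $(x,y)$, that the matrix exponential (or whatever explicit factor realizes the change of unknown) stays bounded with bounded inverse and has controlled derivatives down to $\{y=0\}$ given only $V\in C^3$, and that the resulting zeroth- and first-order error terms have the precise growth needed so that the frequency function of \cite{ABDG} is still almost-monotone — in particular that no new boundary term with a bad sign is introduced at $\{y=0\}$ by the conjugation. A secondary but nontrivial point is the faithful transfer of the infinite-order vanishing condition~\eqref{vp} through all three transformations, which requires quantitative control of the Poisson kernel $\bP^{(a)}_y$ near $y=0$ and of the matrix factors, so that vanishing of $\bu$ at $(0,0)$ is not destroyed (nor artificially created) for $W$.
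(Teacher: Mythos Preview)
Your overall strategy matches the paper's: extension, reduction to $\bU^*$, the matrix-exponential transformation \eqref{W} exploiting commutation of $\tilde V$ with its derivatives to produce \eqref{weq1} with zero Neumann, then the \cite{ABDG} frequency/doubling machinery and a blowup. Two points in the endgame are imprecise. On the transfer of vanishing: the paper does \emph{not} establish thick-cylinder vanishing of $\bU^*$ in weighted $L^2(y^a)$; it only shows infinite-order vanishing of $U((x,0),t)$ on the thin set. Since $\bU^*$ contains $\Div_x\Tbu$, one must show that $\nabla_x\Tbu|_{y=0}$ also vanishes to infinite order, and the tool is not Poisson-kernel control but a trace-interpolation inequality from \cite{AB} that plays the thin $L^2$ norm of $\Tbu$ against the thick weighted $W^{2,2}$ bounds of Lemma \ref{regular2}.

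More substantively, both of your proposed blowup conclusions (``$N(0^+)=+\infty$ contradicts doubling'' and ``the limit is homogeneous of infinite order'') presuppose thick-set infinite-order vanishing of $W$, which is not available. The paper's mechanism is different: the conditional doubling of Theorem \ref{db1} forces the normalized rescalings $U_j$ to converge to a nontrivial limit $U_0$ solving the decoupled extension equation with zero Neumann data; the thin-set infinite-order vanishing then forces $U_0$ to have zero \emph{Dirichlet} trace at $\{y=0\}$ as well; the weak unique continuation result Proposition \ref{wucp} then gives $U_0\equiv 0$, contradicting the nondegeneracy inherited from doubling. Finally, once $U(\cdot,0)\equiv 0$ in $\mathbb B_1^+$, the propagation to all of $\R^{n+1}_+$ is carried out via the interior parabolic strong unique continuation results of \cite{AV, EF} applied away from $\{y=0\}$, rather than a generic connectedness argument.
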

\begin{rmrk}
For $\bu \in H^{s}$, the equation \eqref{maine} is to be understood in the $H^{-s}$ sense. 

\end{rmrk}

The precise assumption on the potential  $V$ is as follows:
\begin{equation}\label{vassump}
||V||_{C^{3}_{(x,t)}(\mathbb R^{n} \times \mathbb R)} \leq K,\ \text{for some $K \geq 0$.}\end{equation}

It is worth emphasizing that there is an example due to Frank Jones ( see \cite{Jo} of a global caloric function (i.e. solution to the heat equation) in $\mathbb R^n \times \mathbb R$ which is supported in a strip of the type $\mathbb R^n \times (t_1, t_2)$.  In view of such an example,  the space-like propagation of zeros  as claimed in Theorem \ref{main} is the best possible. In this connection, we mention that for local solutions to second order parabolic equations space-like strong unique continuation results were proven in the remarkable works \cite{EF, EFV}.

In order to provide a better perspective on our unique continuation result, we mention that for nonlocal elliptic equations  of the type $(-\Delta)^s +V$,  strong unique continuation results have been obtained in \cite{FF, FF1,  Ru, Ru1,  RW}.  The method in \cite{FF} combined the frequency function approach in \cite{GL, GL1} with the Caffarelli-Silvestre extension method in \cite{Caffarelli_Sylvestre_2007}. The approach in \cite{Ru, RW} is instead on Carleman estimates.

In the time dependent case, for global solutions of
\begin{equation}\label{fracheat1}
(\partial_t - \Delta)^s u =Vu,
\end{equation}
a backward space-time strong unique continuation theorem has been established in \cite{BG}  with appropriate assumptions on the potential $V$ which represents the non-local counterpart of the one first obtained by Poon in \cite{Po} for the local case $s=1$. See also \cite{AT} for related work on regularity of nodal sets of solutions to \eqref{fracheat1}.  More recently, a space like strong unique continuation result for local solutions to \eqref{fracheat1}   has been obtained in \cite{ABDG} which constitutes the nonlocal counterpart of the unique continuation results in the aforementioned works \cite{EF, EFV}. We also refer to \cite{BKS} for a unique continuation result for fractional powers of variable coefficient parabolic operators  and \cite{AB} for a related quantitative uniqueness result.

In closing, we refer to the works \cite{Alessandrini_Morassi_2001, AMR, AMRV, KLW, LNUW, LNW} for various qualitative and quantitative results on strong unique continuation for variable coefficient Lam\'e operators in the local case, i.e. when $\lambda$ and $\mu$ are functions of $x$.

The paper is organized as follows. In Section \ref{s:n}, we introduce some basic notations and notions and gather some preliminary results that relevant to this work. In Section \ref{s:fr1}, we define the fractional powers $\mb{H}^s$ via the Fourier transform  and then obtain a pointwise representation of such non-local operators in terms of   the associated semigroup. See Theorem \ref{representation} below. Section \ref{s:extension problem} is devoted to proving Theorem \ref{energy1}. In Section \ref{rl}, we obtain various quantitative regularity estimates for the associated extension problem that are needed for justifying  the Poon type computations as well as for   the blowup argument in the proof of the space-like strong unique continuation result Theorem \ref{main}.

\textbf{Acknowledgment:} We would like to thank Venky  Krishnan and Ramesh Manna for various useful discussions related to this project.

\section{Notations and Preliminaries}\label{s:n}
In this section we introduce the relevant notation and gather some auxiliary  results that will be useful in the rest of the paper. Generic points in $\Rn \times \R$ will be denoted by $(x_0, t_0), (x,t)$, etc. For an open set $\Omega\subset \Rn_x\times \R_t$ we indicate with $C_0^{\infty}(\Omega)$ the set of compactly supported smooth functions in $\Om$. We also indicate by $H^{\alpha}(\Omega)$ the non-isotropic parabolic H\"older space with exponent $\alpha$ defined in \cite[p. 46]{Li}. The symbol $\mathscr S(\R^{n+1})$ will denote the Schwartz space of rapidly decreasing functions in $\R^{n+1}$.

For $f\in L^1(\Rb^n)$, we denote by $\hat{f}$ its Fourier transform as below
\begin{align*}
   \mc{F}_{x\to\xi}{(f)} = \hat{f}(\xi) = \int_{\Rb^n} e^{-i x \cdot \xi} f(x) \D x
\end{align*}
Also then, we have the translation property which says
\begin{align*}
    \mc{F}_{x\to\xi}{(f(\cdot+h))} = e^{i h\cdot\xi} \mc{F}_{x\to\xi}{(f)}, \qd h \in \Rb^n.
\end{align*}
In the setting of the extension problem \eqref{solving extension problem},  we will deal with the thick half-space $\R^{n+1} \times \R^+_y$. At times it will be convenient to combine the additional variable $y>0$ with $x\in \Rn$ and denote the generic point in the thick space $\Rn_x\times\R^+_y$ with the letter $X=(x,y)$. For $x_0\in \Rn, y_0 \in \Rb$ and $r>0$ we let $B_r(x_0) = \{x\in \Rn\mid |x-x_0|<r\}$,
 $\mathbb B_r(X_0)= \{X = (x,y) \in \R^n \times \R \mid |x-x_0|^2 + |y-y_0|^2 < r^2\}$, $\mathbb B_r^+(x_0,y_0)= \mathbb B_r(X_0) \cap \{y>0\}$ (note that this is the upper half-ball),
and $\mathbb Q_r^+((x_0,t_0),0)=\mathbb B_r^+(x_0,0) \times [t_0,t_0+r^2)$.
When the center $x_0$ of $B_r(x_0)$ is not explicitly indicated, then we are taking $x_0 = 0$. Similar agreement for the thick half-balls $\mathbb B_r^+(x_0,0)$. Unless otherwise specified,  for notational ease, $\nabla U$ and  $\operatorname{div} U$ will respectively refer to the quantities  $\nabla_X U$ and $ \operatorname{div}_X U$.  The partial derivative in $t$ will be denoted by $\p_t U$ and also at times  by $U_t$. The partial derivative $\partial_{x_i} U$  will be denoted by $U_i$.
The distance between two points $(X_1, t_1)$ and $(X_2, t_2)$  in the space time is defined as
\begin{equation}\label{distance}
|(X_1, t_1) - (X_2, t_2)| \overset{def}= |X_1 - X_2| + |t_1 - t_2|^{1/2}.
\end{equation}

We will need the following coercivity result  in our analysis  which can be found in \cite[Lemma 5.1]{Giu}.
\begin{lemma}\label{coer}
Let $A=(A^{ij}_{\alpha\beta})$ be a constant matrix satisfying the following Legendre-Hadamard condition
\begin{equation}\label{lg}
\sum_{i,j,\alpha, \beta} A^{ij}_{\alpha\beta} \xi_i \xi_j \eta^\alpha \eta^\beta \geq \nu |\xi|^2 |\eta|^2
\end{equation}
with $\nu\geq 0$. Then one has for every $\tau \in C^{\infty}_0(\mathbb R^n)$
\begin{equation}\label{lg1}
\sum_{i,j, \alpha, \beta}\int A^{ij}_{\alpha \beta} \partial_\alpha \tau^i \partial_\beta \tau^j \geq \nu \int |\nabla \tau|^2.
\end{equation}

\end{lemma}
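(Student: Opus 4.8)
The plan is to establish \eqref{lg1} by a standard coercivity argument via the Fourier transform: pass to frequency space, apply the pointwise hypothesis \eqref{lg} frequency by frequency, and use Plancherel's identity twice. Write $\tau=(\tau^1,\dots,\tau^m)\in C_0^\infty(\Rn)$ and let $\hat\tau^i$ denote the Fourier transform of the component $\tau^i$, so that $\widehat{\partial_\alpha\tau^i}(\eta)=i\eta_\alpha\hat\tau^i(\eta)$. Since each $\tau^j$ is real, $\overline{\partial_\beta\tau^j}=\partial_\beta\tau^j$, so with the normalization constant $c=(2\pi)^{-n}$ Plancherel's identity gives
\begin{equation*}
\int_{\Rn}\partial_\alpha\tau^i\,\partial_\beta\tau^j\,dx=c\int_{\Rn}\eta_\alpha\eta_\beta\,\hat\tau^i(\eta)\,\overline{\hat\tau^j(\eta)}\,d\eta ,
\end{equation*}
and, after multiplying by $A^{ij}_{\alpha\beta}$ and summing over all four indices, the left-hand side of \eqref{lg1} becomes $c\int_{\Rn}\sum_{i,j,\alpha,\beta}A^{ij}_{\alpha\beta}\eta_\alpha\eta_\beta\,\hat\tau^i(\eta)\,\overline{\hat\tau^j(\eta)}\,d\eta$.

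The one point that needs a moment's care — and the closest thing to an obstacle here — is that $\hat\tau$ is complex valued while \eqref{lg} is a statement about real vectors; I would handle this by splitting $\hat\tau(\eta)=a(\eta)+i\,b(\eta)$ with $a,b:\Rn\to\Rm$ real. Then $\hat\tau^i\overline{\hat\tau^j}=a^ia^j+b^ib^j+i(b^ia^j-a^ib^j)$, and since the left-hand side of \eqref{lg1} is manifestly real only the real part contributes, giving
\begin{equation*}
\sum_{i,j,\alpha,\beta}\int_{\Rn}A^{ij}_{\alpha\beta}\,\partial_\alpha\tau^i\,\partial_\beta\tau^j=c\int_{\Rn}\sum_{i,j,\alpha,\beta}A^{ij}_{\alpha\beta}\,\eta_\alpha\eta_\beta\bigl(a^i(\eta)a^j(\eta)+b^i(\eta)b^j(\eta)\bigr)\,d\eta .
\end{equation*}
For each fixed $\eta$ the vectors $a(\eta),b(\eta)\in\Rm$ and $\eta\in\Rn$ are real, so \eqref{lg} applied to the pairs $(a(\eta),\eta)$ and $(b(\eta),\eta)$ yields $\sum_{i,j,\alpha,\beta}A^{ij}_{\alpha\beta}\eta_\alpha\eta_\beta a^ia^j\ge\nu|\eta|^2|a(\eta)|^2$ and the same bound with $b$ in place of $a$.

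Adding these and integrating, the right-hand side above is at least $c\,\nu\int_{\Rn}|\eta|^2\bigl(|a(\eta)|^2+|b(\eta)|^2\bigr)\,d\eta=c\,\nu\int_{\Rn}|\eta|^2|\hat\tau(\eta)|^2\,d\eta$; a final use of Plancherel, via $|\eta|^2|\hat\tau(\eta)|^2=\sum_{i,\alpha}\eta_\alpha^2|\hat\tau^i(\eta)|^2=\sum_{i,\alpha}|\widehat{\partial_\alpha\tau^i}(\eta)|^2$, turns this into $\nu\sum_{i,\alpha}\int_{\Rn}|\partial_\alpha\tau^i|^2=\nu\int_{\Rn}|\nabla\tau|^2$, which is exactly \eqref{lg1}. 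Nothing beyond Plancherel's identity and the hypothesis \eqref{lg} enters, and the argument is insensitive to whether $\nu>0$ or $\nu=0$. If one prefers to avoid the real/imaginary splitting, one may instead note that for fixed $\eta$ the $m\times m$ matrix $M^{ij}(\eta)=\sum_{\alpha,\beta}A^{ij}_{\alpha\beta}\eta_\alpha\eta_\beta$ — which in general is not symmetric — has symmetric part with smallest eigenvalue $\ge\nu|\eta|^2$ by \eqref{lg}, while its antisymmetric part contributes a purely imaginary term to $\sum_{i,j}M^{ij}(\eta)\hat\tau^i(\eta)\overline{\hat\tau^j(\eta)}$ that disappears upon taking real parts; this gives the same estimate.
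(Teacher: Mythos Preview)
Your proof is correct and is exactly the standard Plancherel argument. The paper does not actually prove this lemma; it simply quotes it from \cite[Lemma 5.1]{Giu}, where the proof given is essentially the same Fourier-transform computation you wrote down (pass to frequency space, apply the Legendre--Hadamard condition pointwise to the real and imaginary parts of $\hat\tau$, and use Plancherel back).
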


Such a coercivity result is crucial in proving the energy estimate in Theorem \ref{w22} for the Lam\'e extension problem.

In the final step of the proof of Theorem \ref{main}, when we analyse the blowup limit, we will need the following weak unique continuation result from \cite[Proposition 5.6]{LLR}. 

\begin{prop}\label{wucp}
Let $U_0$ be a  weak solution to
\begin{equation}\label{homog}
\begin{cases}
y^a \partial_t U_0 + \Div(y^a \nabla U_0) =0\ \text{ in  $\mathbb B_1^+ \times [0,1)$,}
\\
\py U_0((x,0), t) \equiv 0\ \text{for all $(x,t)  \in B_1 \times [0,1)$,}
\end{cases}
\end{equation}
such that $U_0((x,0), t) \equiv 0$ for all $(x,t) \in B_1 \times [0,1)$. 
Then $U_0 \equiv 0$ in $\mathbb B_1^+ \times [0,1)$.
\end{prop}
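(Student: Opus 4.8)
The plan is to remove the two conditions on $\{y=0\}$ by reflecting across the hyperplane and then to invoke a unique continuation principle for the resulting degenerate parabolic equation on the \emph{full} ball. Since $\py U_0\equiv 0$ on $B_1\times[0,1)$, I would extend $U_0$ evenly, $\widetilde U_0(x,y,t):=U_0(x,|y|,t)$, and check that $\widetilde U_0$ is a weak solution of the reflected equation $|y|^a\partial_t\widetilde U_0+\Div(|y|^a\nabla\widetilde U_0)=0$ in $\mathbb B_1\times[0,1)$; this is the standard even-reflection fact for the Caffarelli--Silvestre weight (cf. the analogous statements for $(\partial_t-\Delta)^s$ in \cite{ST,ABDG}), the point being that when one tests against $\phi\in C_0^\infty(\mathbb B_1\times[0,1))$ the interior boundary term generated on $\{y=0\}$ is a multiple of $\int(\py U_0)\,\phi$, which vanishes. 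Since moreover $U_0\equiv 0$ on $\{y=0\}\cap(B_1\times[0,1))$, the extension $\widetilde U_0$ solves the degenerate equation on the full ball with vanishing Cauchy data — both $\widetilde U_0$ and $\py\widetilde U_0$ vanish on $\{y=0\}$. (Equivalently one could also form the odd reflection $V(x,y,t):=\operatorname{sgn}(y)\,U_0(x,|y|,t)$, a weak solution of the same equation precisely because $U_0\equiv 0$ on $\{y=0\}$; then $\Psi:=\widetilde U_0-V$ is a weak solution that vanishes on the nonempty open set $\{y>0\}\cap(\mathbb B_1\times[0,1))$, and it suffices to show such $\Psi$ vanishes identically.)

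The core of the argument is a Carleman estimate for the degenerate parabolic operator $|y|^a\partial_t+\Div(|y|^a\nabla\cdot)$ in a neighborhood of $\{y=0\}$, with a Carleman weight that is a suitable convex function of the parabolic distance to the hyperplane, in the spirit of the elliptic estimates of \cite{Ru,RW} and their parabolic analogue in \cite{LLR} (an Almgren/Poon-type frequency-function argument, exploiting the finiteness of the vanishing order, is a possible substitute). Before applying it I would invoke the interior and boundary regularity theory for such weighted equations (developed in Section \ref{rl}, and in \cite{ST,LLR}) — $\widetilde U_0$ is smooth away from $\{y=0\}$ and near $\{y=0\}$ has an expansion in even powers of $y$ with coefficients smooth in $(x,t)$ — so that $\widetilde U_0$ may legitimately be inserted into the estimate. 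Feeding a localization of $\widetilde U_0$ into it and using that both Cauchy data vanish to absorb the boundary contributions forces $\widetilde U_0\equiv 0$ in a space-time ball centred at some point of $\{y=0\}$, hence on a nonempty open subset of $\mathbb B_1^+\times[0,1)$; running this around every point of $\{y=0\}\cap(B_1\times[0,1))$ gives such vanishing near the whole thin sheet, for all $t\in(0,1)$.

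To conclude, I would propagate the vanishing: on $\{y>0\}$ the equation, after dividing by $y^a$, is uniformly parabolic with coefficients that are independent of $x$ and real-analytic in $y$, so for each fixed $t$ the map $(x,y)\mapsto\widetilde U_0(x,y,t)$ is real-analytic on the connected open set $\mathbb B_1^+$. As it vanishes on a nonempty open subset of $\mathbb B_1^+$ for a range of times, slicewise analytic continuation gives $\widetilde U_0(\cdot,\cdot,t)\equiv 0$ on $\mathbb B_1^+$ for every $t\in(0,1)$, and the slice $t=0$ is recovered by continuity in time; restricting to $y>0$ yields $U_0\equiv 0$ in $\mathbb B_1^+\times[0,1)$.

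The step I expect to be the main obstacle is the Carleman estimate near $\{y=0\}$: the weight $|y|^a$ degenerates or blows up along the hyperplane according to the sign of $a\in(-1,1)$, and the conjugated operator does not split into symmetric and antisymmetric parts as cleanly as in the uniformly parabolic case, so the Carleman weight, the weighted norms (carrying the correct powers of $|y|$), and the absorption of the lower-order and $\partial_t$ terms must all be calibrated with care; this is the technical content behind \cite[Proposition 5.6]{LLR}. The reflection and the analytic propagation into $\{y>0\}$ are routine.
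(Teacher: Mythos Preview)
The paper does not supply its own proof of this proposition: it is quoted as \cite[Proposition 5.6]{LLR}, and immediately afterwards the authors remark that it also follows from the space-like analyticity of solutions to \eqref{homog} up to $\{y=0\}$ proved in \cite{BG1}. So there is nothing to compare against beyond those two citations.

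Your outline is sound and lines up with the first of these sources. The even reflection across $\{y=0\}$ (using $\py U_0\equiv 0$) to produce a weak solution on the full ball is the standard move, and the Carleman estimate you describe for $|y|^a\partial_t+\Div(|y|^a\nabla\cdot)$ is exactly the engine in \cite{LLR}; your identification of this as the real technical point is accurate. The subsequent propagation via slicewise spatial analyticity on $\{y>0\}$ is also fine, since once you have vanishing on a strip $\{0<y<\varepsilon\}$ you work on the connected region $\{y>\varepsilon/2\}\cap\mathbb B_1^+$, where the equation is uniformly parabolic with analytic coefficients and classical space-analyticity of parabolic solutions applies.

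The alternative route the paper points to, via \cite{BG1}, is cleaner: there it is shown that any solution of \eqref{homog} with zero Neumann condition is real-analytic in $X=(x,y)$ \emph{up to} $\{y=0\}$ (not just for $y>0$). Granting that, the proposition is immediate: for each fixed $t$, $U_0(\cdot,t)$ is analytic on the connected set $\overline{\mathbb B_1^+}$ and vanishes on the relatively open piece $B_1\times\{0\}$ (indeed to infinite order, since both $U_0$ and $\py U_0$ vanish there and the expansion is in even powers of $y$), hence vanishes identically. This bypasses both the reflection and the Carleman step. Your proposal is therefore correct but takes the longer of the two roads the paper signposts.
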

We mention over here that Proposition \ref{wucp} also follows from the space-like analyticity of solutions to \eqref{homog}  up to $\{y=0\}$ as established in the recent work \cite{BG1}.

\section{The fractional powers of the parabolic Lam\'e operator}\label{s:fr1}

For $f\in L^1(\Rb^n)$, we denote by $\hat{f}$ its Fourier transform as below
\begin{align*}
   \mc{F}_{x\to\xi}{(f)} = \hat{f}(\xi) = \int_{\Rb^n} e^{-i x \cdot \xi} f(x) \D x
\end{align*}
Also then, we have the translation property which says
\begin{align*}
    \mc{F}_{x\to\xi}{(f(\cdot+h))} = e^{i h\cdot\xi} \mc{F}_{x\to\xi}{(f)}, \qd h \in \Rb^n.
\end{align*}
From \eqref{Lame op}, it is not hard to notice that 
\begin{align*}
    \widehat{-\Lb\bu}(\xi) = \left( \mu |\xi|^2 I_n + (\mu+\ld) \xi \otimes \xi \right) \hat{\bu}(\xi)
\end{align*}
Having observed that, we define the heat semigroup on $\Rb^n$ with generator $-\Lb$ as 
\begin{align*}
   \widehat{P_{t} \bu}(\xi) = e^{-t \left(\mu|\xi|^2 I_n + (\mu+\ld) \xi \otimes \xi\right)} \hat{\bu}(\xi), \qd t>0. 
\end{align*}
At this point, we also introduce the evolutive semigroup $P^{\mb{H}}_{\tau}=\{e^{-\tau\mb{H}}\}_{\tau>0}$ on $\Rb^{n+1}$ as follows
\begin{align}\label{evolutive semigroup}
    \widehat{P^{\mb{H}}_{\tau} \bu}(\xi,\sg) = \widehat{e^{-\tau\mb{H}}\bu}(\xi,\sg) = e^{-\tau \left((\mu|\xi|^2 + i\sg)I_n + (\mu+\ld) \xi \otimes \xi\right)} \hat{\bu}(\xi,\sg).
\end{align}
From \eqref{evolutive semigroup}, one can verify that $\{P^{\mb{H}}_{\tau}\}_{\tau>0}$ defines a self-adjoint strongly continuous contraction semigroup on $L^2(\Rb^{n+1})$. Before proceeding further, let us mention an important identity which says 
\begin{align}\label{identity}
    e^{-t \left(\mu|\xi|^2 I_n + (\mu+\ld) \xi \otimes \xi\right)} = e^{-\mu|\xi|^2 t} I_n + \left(e^{-(2\mu+\ld)|\xi|^2 t} - e^{-\mu|\xi|^2 t}\right) \f{\xi\otimes\xi}{|\xi|^2}.
\end{align}
%
To prove the identity \eqref{identity}, we first write
\begin{align}\label{decomposition}
    \mu|\xi|^2 I_n + (\mu+\ld) \xi \otimes \xi = \mu|\xi|^2 \left(I_n - \f{\xi\otimes\xi}{|\xi|^2} \right) + |\xi|^2 (2\mu+\ld) \f{\xi\otimes\xi}{|\xi|^2}.
\end{align}
The fact that the matrix $\f{\xi\otimes\xi}{|\xi|^2}$ is idempotent implies that $\left(I_n - \f{\xi\otimes\xi}{|\xi|^2} \right)$ is also idempotent and 
\begin{align}\label{zero divisor}
    \left(I_n - \f{\xi\otimes\xi}{|\xi|^2} \right) \f{\xi\otimes\xi}{|\xi|^2} = \f{\xi\otimes\xi}{|\xi|^2} \left(I_n - \f{\xi\otimes\xi}{|\xi|^2} \right) = O_n
\end{align}
We now combine \eqref{decomposition} and \eqref{zero divisor} to obtain 
\begin{align*}
    e^{-t \left(\mu|\xi|^2 I_n + (\mu+\ld) \xi \otimes \xi\right)} & = e^{-t \mu|\xi|^2 \left(I_n - \f{\xi\otimes\xi}{|\xi|^2} \right) - t |\xi|^2 (2\mu+\ld) \f{\xi\otimes\xi}{|\xi|^2}} \\
    & = \sum\limits_{k=0}^{\iy} \f{(-t\mu|\xi|^2)^k}{k!} \left(I_n - \f{\xi\otimes\xi}{|\xi|^2} \right) + \sum\limits_{k=0}^{\iy} \f{\left(-t|\xi|^2 (2\mu+\ld) \right)^k}{k!} \f{\xi\otimes\xi}{|\xi|^2} \\
    & = e^{-t \mu|\xi|^2} \left(I_n - \f{\xi\otimes\xi}{|\xi|^2} \right) + e^{- t|\xi|^2 (2\mu+\ld)} \f{\xi\otimes\xi}{|\xi|^2} \\
    & = e^{-t \mu|\xi|^2} I_n + \left( e^{- t|\xi|^2 (2\mu+\ld)} - e^{-t \mu|\xi|^2} \right) \f{\xi\otimes\xi}{|\xi|^2},
\end{align*}
which proves \eqref{identity}.
In view of \eqref{identity}, we equivalently express
\begin{align*}
    \widehat{P_{t} \bu}(\xi) = e^{-t \mu|\xi|^2} \hat{\bu}(\xi) + \left(e^{- t|\xi|^2 (2\mu+\ld)} - e^{-t \mu|\xi|^2} \right) \f{\xi\otimes\xi}{|\xi|^2} \hat{\bu}(\xi).
\end{align*}
Following similar steps, we  also have 
\begin{align}\label{rep1}
     \widehat{P^{\mb{H}}_{\tau} \bu}(\xi,\sg) = e^{-\tau(\mu|\xi|^2+i\sg)} \hat{\bu}(\xi,\sg) + \left( e^{-\tau((2\mu+\ld)|\xi|^2+ i\sigma)} - e^{-\tau( \mu|\xi|^2 + i \sigma)} \right) \f{\xi\otimes\xi}{|\xi|^2} \hat{\bu}(\xi,\sg). 
\end{align}

\begin{definition}
 Let $s \in (0,1)$ and $\bu\in\mc{S}(\Rb^{n+1})$. We define 
\begin{align}\label{deffrac}
  \widehat{\mb{H}^s\bu}(\xi,\sg) =  \left((\mu|\xi|^2 + i\sg) I_n + (\mu+\ld) \xi \otimes \xi\right)^s \hat{\bu}(\xi,\sg).
\end{align}
\end{definition}
We note that the fractional power of the matrix 
\[A(\xi,\sg):= (\mu|\xi|^2 + i\sg) I_n + (\mu+\ld) \xi \otimes \xi\]
can also be realized in simple terms by means of diagonalization employed in \cite{Sc}. To see that, we first consider a rotation matrix $R(\xi)$ in $\Rb^n$ which takes the unit vector $\f{\xi}{|\xi|}$ to $e_1$. Here we assumed $\xi \neq 0$ and $e_1 = (1,0, \cdots, 0)$. Summarizing, we have 
\begin{align*}
    R^t(\xi) R(\xi) = I_n, \qd R(\xi)\xi = |\xi|e_1.
\end{align*}
Consequently, we express
\begin{align*}
    A(\xi,\sg) & = (\mu|\xi|^2 + i\sg) I_n + (\mu+\ld) |\xi|^2 \left(R^t(\xi)e_1 \otimes R^t(\xi)e_1\right) \\
    & = R^t(\xi) \left( (\mu|\xi|^2 + i\sg) I_n + (\mu+\ld) |\xi|^2 e_1 \otimes e_1 \right) R(\xi) \\
    & = R^t(\xi) \begin{pmatrix} 
         (2\mu+\ld)|\xi|^2 + i\sg & 0 & \dots  & 0 \\
         0 & (\mu|\xi|^2 + i\sg) & \dots  & 0 \\
            \vdots & \vdots & \ddots & \vdots \\
           0 & 0 & \dots  & (\mu|\xi|^2 + i\sg)
        \end{pmatrix}  R(\xi)
\end{align*}
and therefore we can write
\begin{align}\label{frac1}
    A^s(\xi,\sg) & = R^t(\xi) \begin{pmatrix} 
         \left((2\mu+\ld)|\xi|^2 + i\sg\right)^s & 0 & \dots  & 0 \\
         0 & (\mu|\xi|^2 + i\sg)^s & \dots  & 0 \\
            \vdots & \vdots & \ddots & \vdots \\
           0 & 0 & \dots  & (\mu|\xi|^2 + i\sg)^s
        \end{pmatrix}  R(\xi) \\
       & = \left(\mu|\xi|^2 + i\sg\right)^s I_n + \left( \left((2\mu+\ld)|\xi|^2 + i\sg\right)^s - \left(\mu|\xi|^2 + i\sg\right)^s \right) R^t(\xi) e_1 \otimes e_1 R(\xi)\notag \\
       & = \left(\mu|\xi|^2 + i\sg\right)^s I_n + \left( \left((2\mu+\ld)|\xi|^2 + i\sg\right)^s - \left(\mu|\xi|^2 + i\sg\right)^s \right) \f{\xi \otimes \xi}{|\xi|^2}.\notag
\end{align}
  Now for a given $\xi \neq 0$,  since the matrices $\frac{\xi \otimes \xi}{|\xi|^2}$ and $I_n - \frac{\xi \otimes \xi}{|\xi|^2}$ maps a vector into orthogonal components ( which follows from \eqref{zero divisor}), it turns out from the representation of the fractional powers as in \eqref{frac1} that
  \begin{equation}\label{nodegn}
 | \widehat{\mb{H}^s \bu} | \geq \text{min} ( |\mu|\xi|^2 + i\sigma|^s | \hat{\bu}|, |(2\mu+ \ld) |\xi|^2 + i \sigma|^s | \hat{\bu}|).
 \end{equation}
 From \eqref{nodegn} it thus follows that
the  natural domain for the definition  of $\mb{H}^s$ is 
\begin{align}\label{hs}
    \tn{Dom}(\mb{H}^s) = H^{2s}\overset{def}= \{\bu \in L^2(\Rb^{n+1}); (\mu|\xi|^2 + i\sg)^s \hat{\bu}(\xi,\sg) \in L^2(\Rb^{n+1})\}.
\end{align}
We now have the following Balakrishnan type representation for $\mb{H}^s$ based on Bochner subordination principle. See \cite{Ba}.
\begin{thrm}\label{representation}
For $s\in(0,1)$ and $\bu \in \mc{S}(\Rb^{n+1})$, we have 
\begin{align}\label{semigroup defn}
    \mb{H}^s\bu(x,t) = - \f{s}{\Gamma(1-s)} \int_0^\iy \left(P_\tau(\Lambda_{-\tau}u)(x,t) - u(x,t)\right) \f{\D\tau}{\tau^{1+s}}
\end{align}
where $\Lambda_h$ is the translation operator defined as $\Lambda_h u(x,t) := u(x,t+h), \ h \in \Rb$. 
\end{thrm}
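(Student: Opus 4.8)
The plan is to establish the Balakrishnan-type formula \eqref{semigroup defn} by verifying it at the level of the Fourier transform. For fixed $(\xi,\sigma)$ with $\xi \neq 0$, write $M = A(\xi,\sigma) = (\mu|\xi|^2 + i\sigma)I_n + (\mu+\ld)\xi\otimes\xi$, which is a (complex, normal) matrix whose eigenvalues, by the diagonalization \eqref{frac1}, are $z_1 = (2\mu+\ld)|\xi|^2 + i\sigma$ with multiplicity one and $z_2 = \mu|\xi|^2 + i\sigma$ with multiplicity $n-1$. The nondegeneracy condition \eqref{lgo} guarantees that $\operatorname{Re} z_1 \geq \delta_0|\xi|^2 > 0$ and $\operatorname{Re} z_2 \geq \delta_0|\xi|^2 > 0$, so each eigenvalue lies in the open right half-plane. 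First I would recall the classical scalar Balakrishnan identity: for any complex number $z$ with $\operatorname{Re} z > 0$ and $s \in (0,1)$,
\begin{align}\label{scalarbala}
z^s = -\frac{s}{\Gamma(1-s)}\int_0^\iy \bigl(e^{-\tau z} - 1\bigr)\,\frac{\D\tau}{\tau^{1+s}},
\end{align}
which follows from the Gamma integral $\int_0^\iy (e^{-\tau z}-1)\tau^{-1-s}\D\tau = \Gamma(-s)z^s$ (valid by analytic continuation from $z>0$, using the bound $|e^{-\tau z}-1| \leq \min(1, \tau|z|)$ to secure absolute convergence at both endpoints and Morera/holomorphy in $z$ on the right half-plane).

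Next I would apply \eqref{scalarbala} in matrix form. Since $M$ is normal, one may either diagonalize via the unitary-up-to-transpose $R(\xi)$ as in \eqref{frac1} and apply \eqref{scalarbala} to each eigenvalue $z_1, z_2$ separately, or argue directly that $-\frac{s}{\Gamma(1-s)}\int_0^\iy (e^{-\tau M} - I_n)\tau^{-1-s}\D\tau = M^s$ as an identity of matrix-valued Bochner integrals, the integrand being norm-bounded by $C\min(1,\tau\|M\|)$ uniformly so that the integral converges absolutely in operator norm. Taking the Fourier transform of the right-hand side of \eqref{semigroup defn}: by \eqref{evolutive semigroup} we have $\widehat{P_\tau(\Lambda_{-\tau}u)}(\xi,\sigma) = e^{-\tau\mu|\xi|^2 I_n - \tau(\mu+\ld)\xi\otimes\xi}\,e^{-i\tau\sigma}\hat u(\xi,\sigma) = e^{-\tau M}\hat u(\xi,\sigma)$, where the extra factor $e^{-i\tau\sigma}$ comes precisely from the time-translation $\Lambda_{-\tau}$ via the translation property $\widehat{u(\cdot, \cdot - \tau)} = e^{-i\tau\sigma}\hat u$. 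Hence the Fourier transform of the right side of \eqref{semigroup defn} is $-\frac{s}{\Gamma(1-s)}\int_0^\iy (e^{-\tau M} - I_n)\tau^{-1-s}\D\tau\,\hat u(\xi,\sigma) = M^s\hat u(\xi,\sigma) = \widehat{\mb{H}^s\bu}(\xi,\sigma)$ by \eqref{deffrac}, which is the claim.

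The main technical point — rather than a deep obstacle — is justifying the interchange of the Fourier transform with the $\tau$-integral, i.e. that the Bochner integral in the time/space domain corresponds to the pointwise-in-$(\xi,\sigma)$ integral after Fourier transform. For $\bu \in \mathscr{S}(\Rb^{n+1})$ this is routine: one splits $\int_0^\iy = \int_0^1 + \int_1^\iy$; on $(1,\iy)$ the bound $\|e^{-\tau M} - I_n\| \leq 2$ together with $\tau^{-1-s} \in L^1(1,\iy)$ and the $L^2$-contractivity of $P_\tau$ (noted after \eqref{evolutive semigroup}) gives an $L^2$-convergent integral; on $(0,1)$ one uses $\|e^{-\tau M} - I_n\| \leq \tau\|M\|$, and since $\widehat{\mb{H}\bu} = M\hat u \in L^2$ (as $\bu \in \mathscr{S}$ implies $\bu \in H^2 = \operatorname{Dom}(\mb{H})$), the quantity $P_\tau(\Lambda_{-\tau}\bu) - \bu = \int_0^\tau \frac{d}{dr}P_r(\Lambda_{-r}\bu)\,dr = -\int_0^\tau P_r(\Lambda_{-r}(\mb{H}\bu))\,dr$ has $L^2$-norm $\leq \tau\|\mb{H}\bu\|_{L^2}$, and $\tau\cdot\tau^{-1-s} = \tau^{-s} \in L^1(0,1)$. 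Fubini/Plancherel then legitimizes moving $\mathcal{F}$ inside, and combined with \eqref{scalarbala} applied to the eigenvalues $z_1,z_2$ we recover \eqref{deffrac}. One should also remark that for $\xi = 0$ the matrix $M = i\sigma I_n$ and the formula reduces to the scalar case, a null set in any event.
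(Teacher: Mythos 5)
Your proof is correct and follows essentially the same route as the paper: pass to the Fourier side, use the evolutive semigroup to identify $\widehat{P_\tau(\Lambda_{-\tau}\bu)}(\xi,\sigma) = e^{-\tau A(\xi,\sigma)}\hat\bu(\xi,\sigma)$, decompose the matrix exponential along the complementary idempotents $\xi\otimes\xi/|\xi|^2$ and $I_n - \xi\otimes\xi/|\xi|^2$ (equivalently the diagonalization \eqref{frac1}), and apply the scalar Balakrishnan identity to each of the two eigenvalues. Your explicit justification of the Fubini/Plancherel interchange via the split $\int_0^1 + \int_1^\infty$ and the bound $\|P_\tau(\Lambda_{-\tau}\bu) - \bu\|_{L^2} \leq \tau\|\mb{H}\bu\|_{L^2}$ supplies a detail the paper leaves implicit, but the underlying computation is the same one the paper carries out after \eqref{multiplier}.
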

\begin{proof}
We will consider fourier transform in both the space and time variables in order to prove the theorem. We observe that
\begin{align}
   \nn \mc{F}_{x,t}(P_\tau(\Lambda_{-\tau}u)(\xi,\sg) & = e^{-i\sg \tau} \mc{F}_{x} \left( P_\tau (\mc{F}_t u) \right)(\xi,\sg) \\
  \nn  & = e^{-i\sg \tau} e^{-\tau \left(\mu|\xi|^2 I_n + (\mu+\ld) \xi \otimes \xi\right)} \hat{\bu}(\xi,\sg) \\
    \label{multiplier} & = e^{-\tau \left( (\mu|\xi|^2 + i\sg) I_n + (\mu+\ld) \xi \otimes \xi\right)} \hat{\bu}(\xi,\sg).
\end{align}
Denoting RHS of \eqref{semigroup defn} by $h(x,t)$ and then taking its Fourier transform and also by using \eqref{rep1} we find 
\begin{align*}
    \widehat{h}(\xi,\sg) & = - \f{s}{\Gamma(1-s)} \int_0^\iy \left( e^{-\tau \left( (\mu|\xi|^2 + i\sg) I_n + (\mu+\ld) \xi \otimes \xi\right)} - I_n \right)\hat{\bu}(\xi,\sg) \f{\D\tau}{\tau^{1+s}} \\
    & = - \f{s}{\Gamma(1-s)} \int_0^\iy \left(e^{-\tau(\mu|\xi|^2+i\sg)} - 1\right) \hat{\bu}(\xi,\sg) \f{\D\tau}{\tau^{1+s}}  \\
    & \qd \qd \qd - \f{s}{\Gamma(1-s)} \int_0^\iy \left( e^{-\tau((2\mu+\ld)|\xi|^2 + i\sg)} - e^{-\tau (\mu|\xi|^2 + i \sg)} \right)  \f{\D\tau}{\tau^{1+s}} \f{\xi\otimes\xi}{|\xi|^2} \hat{\bu}(\xi,\sg) \\
    & = (\mu|\xi|^2+i\sg)^s \hat{\bu}(\xi,\sg) + \left( \left((2\mu+\ld)|\xi|^2 + i\sg\right)^s - \left(\mu|\xi|^2 + i\sg\right)^s \right) \f{\xi \otimes \xi}{|\xi|^2} \hat{\bu}(\xi,\sg) \\
    & = A^s(\xi,\sg) \hat{\bu}(\xi,\sg),
\end{align*}
where in the last inequality above, we used \eqref{frac1}.
\end{proof}


\section{The extension problem and energy estimates} \label{s:extension problem}
In this section, we build the framework for the related extension problem for $\mb{H}^s$.  Given $\bu$, we look  for $\Tbu$ satisfying
\begin{align}\label{solving extension problem}
\begin{cases}
   \dd_t \Tbu  = \left( \dd_y^2 + \f{a}{y}\dd_y + \Lb \right) \Tbu , \tn{ for } (x,t)\in\Rb^{n+1},\ y>0, \\
    \Tbu(x,t,0)  = \bu(x,t) , \tn{ for } (x,t)\in\Rb^{n+1},
\end{cases}
\end{align}
where 
\begin{equation}\label{relation}
a=1-2s.
\end{equation}
Here the Dirichlet data i.e. $\bu$ is prescribed on the boundary $y=0$. More precisely,  we show that  $\mb{H}^s$  can be realized as a certain weighted Dirichlet to Neumann map corrresponding to \eqref{solving extension problem} which  is precisely the content of Theorem \ref{energy1}.

A key ingredient in our analysis is the Poisson representation formula for $\Tbu$. To describe this, let $\bW$ denote the heat kernel associated to the Lam\'e operator $\Lb$ i.e.
\begin{align*}
    (\dd_t - \Lb)\bW(x-x_0,t-t_0) = \delta(x-x_0,t-t_0)\Ib_n.
\end{align*}
Then following \cite[Lemma A.1]{Sc}, we have
\begin{align}\label{FT of heat kernel}
    \mc{F}_{x} \bW(\xi,t) = e^{-\mu|\xi|^2t} \ \Ib_n + \left( e^{-(2\mu+\ld)|\xi|^2t} - e^{-\mu|\xi|^2t} \right) \f{\xi\otimes\xi}{|\xi|^2}.
\end{align}

Now  similar to that in \cite{BGMN}, for $y>0$, we define 
\begin{align}\label{Poisson kernel}
    \bP^{(a)}_y(x,z,t) = \f{1}{2^{2s}\Gamma(s)} \f{y^{2s}}{t^{1+s}} e^{-\f{y^2}{4t}} \bW(x-z,t)
\end{align}
which represents the Poisson kernel for the problem \eqref{solving extension problem}. For the sake of completeness, we briefly justify this claim.  
 From straightforward computations, we obtain 
\begin{align}\label{Poisson kernel 1}
    \left(\dd_y^2 + \f{a}{y}\dd_y\right) \bP^{(a)}_y(x,z,t) = \left(\f{y^2}{4t^2} - \f{1+s}{t}\right) \bP^{(a)}_y(x,z,t).
\end{align}
Now for $x \neq z$, using the fact that $\bW$ is the Heat kernel for $\mb{L}$, we have
\begin{align}\label{Poisson kernel 2}
    \left( \dd_t - \Lb \right) \bP^{(a)}_y(x,z,t) = \left(\f{y^2}{4t^2} - \f{1+s}{t}\right) \bP^{(a)}_y(x,z,t).
\end{align}
Combining \eqref{Poisson kernel 1} and \eqref{Poisson kernel 2}, we can conclude  that
\begin{align}\label{PDE for Poisson kernel}
    \dd_t \bP^{(a)}_y(x,z,t) = \left( \dd_y^2 + \f{a}{y}\dd_y + \Lb \right) \bP^{(a)}_y(x,z,t), \ \tn{ for } x \neq z, \ y>0.
\end{align}

We now proved the central result of this section which is Theorem \ref{energy1}.

\begin{proof}
We first verify that $\Tbu$ indeed solves the degenerate PDE in \eqref{solving extension problem}. 
To do so, we differentiate \eqref{defn of soln} under the integral sign and make use of the facts that $\bP^{(a)}_y$ satisfies \eqref{PDE for Poisson kernel} and
\begin{align*}
    \lim_{\tau\to 0+} \f{e^{-\f{y^2}{4\tau}}}{\tau^{1+s}} = \lim_{\tau\to \iy} \f{e^{-\f{y^2}{4\tau}}}{\tau^{1+s}} = 0,
\end{align*}
to arrive at 
\begin{align*}
    \dd_t \Tbu = \left( \dd_y^2 + \f{a}{y}\dd_y + \Lb \right) \Tbu, \qd \tn{ for } (x,t)\in\Rb^{n+1} \tn{ and } y>0.
\end{align*}
We now prove \eqref{initial data}, i.e.
\[ \lim\limits_{y\to 0+} \Tbu(\cdot,y,\cdot) = \bu(\cdot,\cdot),\ \tn{ in } H^s. \]
Taking Fourier transform w.r.t time variable in \eqref{defn of soln}, we have
\begin{align*}
    \mc{F}_t{\Tbu}(x,y,\sg) = \int_0^\iy \int_{\Rb^n} \bP^{(a)}_y(x,z,\tau) e^{-i\sg\tau} \mc{F}_{t}\bu(x,\sg) \D z\D\tau\ 
\end{align*}
which by subsequently taking the Fourier transform in $x$ variable further reduces to
\begin{align*}
    \widehat{\Tbu}(\xi,y,\sg) = \f{1}{2^{2s}\Gamma(s)} \int_0^\iy \f{y^{2s}}{\tau^{1+s}} e^{-\f{y^2}{4\tau}-i\sg\tau} \mc{F}_{x}\mb{W}(\xi,\tau) \widehat{\bu}(\xi,\sg) \D\tau.
\end{align*}
Now by using \eqref{FT of heat kernel} and \eqref{Poisson kernel} we find
\begin{align}
   \nn \widehat{\Tbu}(\xi,y,\sg) & = \f{y^{2s}}{2^{2s}\Gamma(s)} \int_0^\iy e^{- \f{y^2}{4\tau} + \left(-i\sg-\mu|\xi|^2\right)\tau} \f{\D\tau}{\tau^{1+s}} \widehat{\bu}(\xi,\sg) \\
   \label{FT of extended soln} & \qd + \f{y^{2s}}{2^{2s}\Gamma(s)} \int_0^\iy \left(e^{-\f{y^2}{4\tau} + \left(-i\sg-(2\mu+\ld)|\xi|^2\right)\tau} - e^{- \f{y^{2}}{4\tau}+\left(-i\sg-\mu|\xi|^2\right)\tau} \right) \f{\D\tau}{\tau^{1+s}} \f{\xi\otimes\xi}{|\xi|^2} \widehat{\bu}(\xi,\sg).
\end{align}
Now, we recall the following important identity 
\begin{align}\label{Macdonald identity}
    \int_0^\iy \tau^{\nu-1} e^{-\left(\f{\beta}{\tau}+\gamma \tau\right)} \D \tau = 2 \left(\f{\beta}{\gamma}\right)^{\f{\nu}{2}} \mc{K}_{\nu}(2\sqrt{\beta\gamma})
\end{align}
where $\tn{Re}(\beta), \tn{Re}(\gamma) > 0$ and $\mc{K}_\nu(\cdot)$ denotes the Macdonald's function. Using \eqref{Macdonald identity}, we find
\begin{align}
    \nn \int_0^\iy e^{- \f{y^2}{4\tau} + \left(i\sg-\mu|\xi|^2\right)\tau} \f{\D \tau}{\tau^{1+s}} & = \int_0^\iy \tau^{-1-s} e^{-\left(\f{y^2}{4\tau}+\left(\mu|\xi|^2+i\sg\right)\tau\right)} \D\tau \\
   \label{applying Macdonald 1} & = \f{2^{1+s}}{y^s} \left(\mu|\xi|^2+i\sg\right)^{\f{s}{2}} \mc{K}_s\left(y\sqrt{\mu|\xi|^2+i\sg}\right),
\end{align}
where we have used \eqref{Macdonald identity} for $\nu=-s,\ \beta= \f{y^2}{4},\  \gamma=\mu|\xi|^2+i\sg$ and the fact $\mc{K}_{s}=\mc{K}_{-s}$. Following an exactly similar set of arguments, we also obtain
    \begin{align}\label{applying Macdonald 2}
    & \int_0^\iy e^{-\f{y^2}{4\tau} - \left(i\sg+(2\mu+\ld)|\xi|^2\right)\tau} \f{\D \tau}{\tau^{1+s}} = \f{2^{1+s}}{y^s} \left((2\mu+\ld)|\xi|^2+i\sg\right)^{\f{s}{2}} \mc{K}_s\left(y\sqrt{(2\mu+\ld)|\xi|^2+i\sg}\right).
    \end{align}
In light of \eqref{applying Macdonald 1} and \eqref{applying Macdonald 2}, we obtain from \eqref{FT of extended soln}
\begin{align}
   \notag \widehat{\Tbu}(\xi,y,\sg) & = \f{2^{1-s}y^s}{\Gamma(s)} \left(\mu|\xi|^2+i\sg\right)^{\f{s}{2}} \mc{K}_s\left(y\sqrt{\mu|\xi|^2+i\sg}\right) \widehat{\bu}(\xi,\sg) \\
  \notag & \qd + \f{2^{1-s}y^s}{\Gamma(s)} \left( \left((2\mu+\ld)|\xi|^2+i\sg\right)^{\f{s}{2}} \mc{K}_s\left(y\sqrt{(2\mu+\ld)|\xi|^2+i\sg}\right) \right.\\
   \label{FT of extension} & \qd \qd \left. - \left(\mu|\xi|^2+i\sg\right)^{\f{s}{2}} \mc{K}_s\left(y\sqrt{\mu|\xi|^2+i\sg}\right)\right)\f{\xi\otimes\xi}{|\xi|^2} \widehat{\bu}(\xi,\sg).
\end{align}
We now recall the following property of the Macdoland's function 
\begin{align}\label{limit of Macdoland}
    \lim_{z\to 0} z^s\mc{K}_s(z) = 2^{s-1}\Gamma(s)
\end{align}
which will be important in the forthcoming steps.

We have using \eqref{FT of extension}
\begin{align}
   \label{proof of initial data}  & \|\Tbu(\cdot,y,\cdot)-u(\cdot,\cdot)\|_{H^s(\Rb^{n+1})} = \int_{\Rb^{n+1}} (1+||\xi|^2+i\sg|^2)^s|\widehat{\Tbu}(\xi,y,\sg)-\widehat{\bu}(\xi,\sg)|^2 \D\xi\D\sg \\
   \notag & \preceq \int_{\Rb^{n+1}} (1+||\xi|^2+i\sg|^2)^s \left\vert \f{2^{1-s}y^s}{\Gamma(s)} \left(\mu|\xi|^2+i\sg\right)^{\f{s}{2}} \mc{K}_s\left(y\sqrt{\mu|\xi|^2+i\sg}\right) - 1 \right\vert^2 |\widehat{\bu}(\xi,\sg)|^2 \\
   \notag &  + \int_{\Rb^{n+1}} (1+||\xi|^2+i\sg|^2)^s \left\vert y^s\left((2\mu+\ld)|\xi|^2+i\sg\right)^{\f{s}{2}} \mc{K}_s\left( y\sqrt{(2\mu+\ld)|\xi|^2+i\sg}\right) \right.\\
    \notag & \hspace{6cm}\left. - y^s\left(\mu|\xi|^2+i\sg\right)^{\f{s}{2}} \mc{K}_s\left(y\sqrt{\mu|\xi|^2+i\sg}\right)\right\vert^2 |\widehat{\bu}(\xi,\sg)|^2.
\end{align}

 We first  look at  the first term in RHS of \eqref{proof of initial data}. For $\epsilon>0$, let us choose $\delta>0$ small such that we have 
\[ \left\vert \f{2^{1-s}}{\Gamma(s)}z^s\mc{K}_s(z)- 1\right\vert \le \f{\epsilon}{2}, \tn{ when } |z|\le \delta\]
from \eqref{limit of Macdoland}. 
Next, we compute
\begin{align*}
    & \int_{\Rb^{n+1}} (1+||\xi|^2+i\sg|^2)^s \left\vert \f{2^{1-s}y^s}{\Gamma(s)} \left(\mu|\xi|^2+i\sg\right)^{\f{s}{2}} \mc{K}_s\left(y\sqrt{\mu|\xi|^2+i\sg}\right) - 1 \right\vert^2 |\widehat{\bu}(\xi,\sg)|^2 \D\xi\D\sg \\
    & \le \underbrace{\underset{|z|\le \delta}{\tn{sup}}\left\vert\f{2^{1-s}z^s}{\Gamma(s)}\mc{K}_s(z)-1\right\vert^2 \int_{\Rb^{n+1}} (1+||\xi|^2+i\sg|^2)^s |\widehat{\bu}(\xi,\sg)|^2 \D\xi\D\sg}_{\textnormal{(I)}}\\
    & + \underbrace{\underset{|z| > \delta}{\tn{sup}} \left\vert\f{2^{1-s}z^s}{\Gamma(s)}\mc{K}_s(z)-1\right\vert^2 \int_{\Rb^{n+1}} \mathbf{\chi}_{y\left|\mu|\xi|^2+i\sg\right|^{\f{1}{2}} > \delta} (1+||\xi|^2+i\sg|^2)^s |\widehat{\bu}(\xi,\sg)|^2 \D\xi\D\sg}_{\textnormal{(II)}}.
\end{align*}
 Using our choice of $\delta$, we can show that the term in $\textnormal{(I)}$ can be  upper bounded by  $\f{\epsilon}{2}\|u\|_{H^s(\Rb^{n+1})}$. To handle the term $\textnormal{(II)}$, we use boundedness of $z^s\mc{K}_s(z)$ when $z$ is away from zero. An application of Lebesgue dominated convergence theorem alongwith the fact that $u\in H^s(\Rb^{n+1})$ implies that
\begin{align*}
    \lim_{y\to 0+} \int_{\Rb^{n+1}} \mathbf{\chi}_{y\left|\mu|\xi|^2+i\sg\right|^{\f{1}{2}} > \delta} (1+||\xi|^2+i\sg|^2)^s |\widehat{\bu}(\xi,\sg)|^2 \D\xi\D\sg = 0.
\end{align*}
We can proceed similarly to establish
\begin{align*}
    & \lim\limits_{y\to 0+} \int_{\Rb^{n+1}} (1+||\xi|^2+i\sg|^2)^s \left\vert y^s\left((2\mu+\ld)|\xi|^2+i\sg\right)^{\f{s}{2}} \mc{K}_s\left( y\sqrt{(2\mu+\ld)|\xi|^2+i\sg}\right) \right.\\
    \notag & \hspace{2cm} \left. - y^s\left(\mu|\xi|^2+i\sg\right)^{\f{s}{2}} \mc{K}_s\left(y\sqrt{\mu|\xi|^2+i\sg}\right)\right\vert^2 |\widehat{\bu}(\xi,\sg)|^2 \D\xi\D\sg = 0.
\end{align*}
Hence we have proved \eqref{initial data}.
Now we proceed to prove \eqref{Neumann data}. Taking $y$ derivative in \eqref{FT of extension}, we find
\begin{align*}
    \dd_y \widehat{\Tbu}(\xi,y,\sg) & = \f{s}{y} \widehat{\Tbu}(\xi,y,\sg) + \f{2^{1-s}y^s}{\Gamma(s)} \left(\mu|\xi|^2+i\sg\right)^{\f{1+s}{2}} \mc{K}^{'}_s\left(y\sqrt{\mu|\xi|^2+i\sg}\right) \widehat{\bu}(\xi,\sg) \\
  \notag &  + \f{2^{1-s}y^s}{\Gamma(s)} \left( \left((2\mu+\ld)|\xi|^2+i\sg\right)^{\f{1+s}{2}} \mc{K}^{'}_s\left(y\sqrt{(2\mu+\ld)|\xi|^2+i\sg}\right) \right.\\
    & \left. - \left(\mu|\xi|^2+i\sg\right)^{\f{1+s}{2}} \mc{K}^{'}_s\left(y\sqrt{\mu|\xi|^2+i\sg}\right)\right)\f{\xi\otimes\xi}{|\xi|^2} \widehat{\bu}(\xi,\sg).
\end{align*}
For notational convenience, we denote 
\begin{align*}
    L_1 = \sqrt{\mu|\xi|^2+i\sg}, \qd L_2 = \sqrt{(2\mu+\ld)|\xi|^2+i\sg}.
\end{align*}
Now we use the following recurrence and derivative formula for the Macdoland's function 
\begin{align}\label{recu}
   \mc{K}^{'}_s(z)= \f{s}{z} \mc{K}_s(z) - \mc{K}_{s+1}(z),\qd \mc{K}_{s+1}(z) - \mc{K}_{s-1}(z) = \f{2s}{z} \mc{K}_s(z).
\end{align}
Using the first recurrence relation in \eqref{recu} above, we have
\begin{align}
   \label{derivative of extended soln}  \dd_y \widehat{\Tbu}(\xi,y,\sg) & =  \f{2^{1-s}y^s}{\Gamma(s)} \left( \f{2s}{y} L_1^{s} \mc{K}_s(L_1y) - L_1^{1+s}\mc{K}_{1+s}(L_1y)\right) \widehat{\bu}(\xi,\tau) \\
    \nn & \qd + \f{2^{1-s}y^s}{\Gamma(s)} \left\{ \left( \f{2s}{y} L_2^{s} \mc{K}_s(L_2y) - L_2^{1+s}\mc{K}_{1+s}(L_2y) \right) \right. \\
    \nn  & \qd \qd \qd + \left. \left(\f{2s}{y} L_1^{s} \mc{K}_s(L_1y) - L_1^{1+s}\mc{K}_{1+s}(L_1 y) \right) \right\} \f{\xi\otimes\xi}{|\xi|^2} \widehat{\bu}(\xi,\sg). 
\end{align}
Now using the second recurrence relation in \eqref{recu} we find
\begin{align*}
    & \f{\Gamma(s)}{2^{1-s}} y^{1-2s} \dd_y \widehat{\Tbu}(\xi,y,\sg) \\
    & = - L_1^{1+s} y^{1-s} \mc{K}_{1-s}(L_1y) \widehat{\bu}(\xi,\tau) - \left\{L_2^{1+s} y^{1-s} \mc{K}_{1-s}(L_2 y) - L_3^{1+s} y^{1-s} \mc{K}_{1-s}(L_3y)  \right\} \f{\xi\otimes\xi}{|\xi|^2} \widehat{\bu}(\xi,\sg) \\
    & \underset{y\to 0+}{\longrightarrow} - 2^{-s} \Gamma(1-s) L_1^{2s} \widehat{\bu}(\xi,\tau) - 2^{-s} \Gamma(1-s) \left(L_2^{2s} - L_1^{2s}\right) \f{\xi\otimes\xi}{|\xi|^2} \widehat{\bu}(\xi,\sg) = - \f{\Gamma(1-s)}{2^{s}} \widehat{\mb{H}^s\bu}(\xi,\sg).
\end{align*}
In the last equality  above, we used \eqref{frac1}.  Along with the above pointwise limit, we can imitate the arguments in the proof of \eqref{initial data} using duality ( see for instance the proof of Theorem 3.1 in \cite{BKS})  to guarantee $\lim\limits_{y\to 0+} \f{2^{2s-1}\Gamma(s)}{\Gamma(1-s)} y^a \dd_y \Tbu =  (\dd_t -\Lb)^s\bu$ in $H^{-s}$ topology.

Next we turn our attention to proving \eqref{energy}. Using the Fourier representation of $\Tbu$ from \eqref{FT of extended soln} and Plancherel theorem, we easily note that
\begin{align*}
   \|\Tbu(\cdot,y,\cdot)\|^2_{L^2(\Rb^{n+1})} \preceq y^{2s} \int_{\Rb^{n+1}} \int_0^\iy  \f{e^{-\f{y^2}{4\tau}}}{\tau^{1+s}} \D\tau\ |\widehat{\bu}(\xi,\sg)|^2 \D\xi\D\sg \preceq \|\bu\|^2_{L^2(\Rb^{n+1})}
\end{align*}
which results in 
\begin{align*}
    \int_{0}^M \int_{\Rb^{n+1}} y^a |\Tbu|^{2} \D x\D y\D t \preceq \|\bu\|^2_{L^2(\Rb^{n+1})} M^{1+a}.
\end{align*}
Now we want to show that $\int_0^\iy \int_{\Rb^{n+1}} y^a |\nabla_{x,y}\Tbu|^2 \D x\D t\D y \preceq \|\bu\|_{H^{s}}$. In order to do so, we again use the Plancherel theorem, \eqref{FT of extension} and \eqref{derivative of extended soln}  to obtain 
\begin{align}
   \nn \int_0^\iy \int_{\Rb^{n+1}} y^a |\nabla_{x,y}\Tbu|^2 \D x\D t\D y & = \int_0^\iy \int_{\Rb^{n+1}} y^a \left( |\xi|^2 |\widehat{\Tbu}|^2 + \left|\dd_y \widehat{\Tbu}\right|^2 \right)(\xi,y,\sg) \ \D\xi\D\sg\D y \\
   \nn & \preceq \int_0^\iy \int_{\Rb^{n+1}} y \left( |\xi|^2 L_1^{2s} \mc{K}^2_s(L_1y) + |\xi|^2 L_2^{2s} \mc{K}^2_s(L_2y) \right) |\widehat{\bu}(\xi,\sg)|^2\D\xi\D\sg\D y \\
   \nn & + \int_0^\iy \int_{\Rb^{n+1}} y \left( L_1^{2+2s} \mc{K}^2_{1-s}(L_1y) + L_2^{2+2s} \mc{K}^2_{1-s}(L_2y) \right) |\widehat{\bu}(\xi,\sg)|^2\D\xi\D\sg\D y \\
   \label{gradient estimate} & \preceq \int_0^\iy \int_{\Rb^{n+1}} \sum_{i=1}^2  y L_i^{2+2s} \left( \mc{K}^2_s(L_i y) + \mc{K}^2_{1-s}(L_i y)\right) |\widehat{\bu}(\xi,\sg)|^2 \D\xi\D\sg\D y. 
\end{align}
In the very last step, we have used $|\xi| \le L_i,\ i=1,2$. Now similar to \cite[Lemma 4.5]{BG}, we can use the asymptotics of Bessel functions  to  obtain
\begin{align}\label{asymp}
    \int_0^\iy y L_i^2 \left( \mc{K}^2_s(L_i y) + \mc{K}^2_{1-s}(L_i y) \right) \D y \le C_s, \qd i=1, 2. 
\end{align}
Using \eqref{asymp} in \eqref{gradient estimate} we thus conclude
\begin{align*}
    \int_0^\iy \int_{\Rb^{n+1}} y^a |\nabla_{x,y}\Tbu|^2 \D x\D t\D y \preceq \sum_{i=1}^2 \int_{\Rb^{n+1}}  L_i^{2s} |\widehat{\bu}(\xi,\sg)|^2\D\xi \D\sg \preceq \|\bu\|_{H^s}.
\end{align*}
This finishes the proof of the theorem.
\end{proof}

\section{Regularity theory for the extension problem}\label{rl}
It follows from Theorem  \ref{energy1} that if $\bu \in H^{s}$ solves $\mb{H}^s \bu\overset{def}= (\partial_t - \mb{L})^s \bu = V\bu$, then we have  by analogous arguments as in the proof of Lemma 4.6 in \cite{BKS}  that $\Tbu$ is a weak solution to the following extension problem
\begin{equation}\label{ext1}
\begin{cases}
   \dd_t \Tbu = \left( \dd_y^2 + \f{a}{y}\dd_y + \Lb \right) \Tbu , & \tn{ for } y>0, \\
    \Tbu(x,t,0)=\bu(x,t), & \tn{ for } (x,t)\in\Rb^{n+1},
    \\
    \py \Tbu= V \bu & \text{in $B_1 \times (-1, 0]$}.
\end{cases}    
\end{equation}
We refer to Definition 4.3 in \cite{BG} for the precise notion of weak solutions. See also \cite{BS}. We now show the smoothness of weak solutions to \eqref{ext1}. As an intermediate step, we prove the following $W^{2,2}$ type estimate.
 
\begin{thrm}\label{w22}
Let $\Tbu$ be a weak solution to \eqref{ext1} in $\mathbb B_1^+ \times (-1, 0]$ where $V$ satisfies \eqref{vassump}.  Then it follows that $\nabla \nabla_x \Tbu \in L^{2}_{loc}(\mathbb B_1^+ \times (-1, 0],  y^a dXdt).$
\end{thrm}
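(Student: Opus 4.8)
The strategy is the classical difference-quotient (Nirenberg) method, adapted to the degenerate weight $y^a$ and to the fact that we only want regularity in the $x$-directions (tangential to $\{y=0\}$). Let $\Tbu$ be a weak solution of \eqref{ext1} on $\mathbb B_1^+ \times (-1,0]$. Fix a cutoff $\eta \in C^\infty_0(\mathbb B_1^+ \times (-1,0])$ with $\eta \equiv 1$ on a slightly smaller cylinder, and for $h \neq 0$ and $k \in \{1,\dots,n\}$ let $\delta_k^h \Tbu(x,y,t) = \big(\Tbu(x+he_k,y,t) - \Tbu(x,y,t)\big)/h$ denote the tangential difference quotient. Note $\delta_k^h$ commutes with $\partial_y$, with $\partial_t$, with the weight $y^a$, and — crucially — with $\mathbf L$, since $\mathbf L$ has constant coefficients; moreover, since the Neumann datum is $\py \Tbu = V\bu$ with $V$ depending on $(x,t)$, the product rule gives $\delta_k^h(V\bu) = (\delta_k^h V)(\bu(\cdot+he_k,\cdot)) + V\,\delta_k^h\bu$, where by \eqref{vassump} the factor $\delta_k^h V$ is bounded uniformly in $h$ in $C^2$.

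First I would record the Caccioppoli-type energy inequality for the extension equation: testing the weak formulation of \eqref{ext1} with $\eta^2 \Tbu$ and using the boundary term $\int y^a \partial_y \Tbu \cdot (\eta^2\Tbu) = \int_{\{y=0\}} V\bu\cdot(\eta^2\bu)$ controlled by $\|V\|_\infty$ and a trace/interpolation inequality (as in \cite{ABDG, BG, BKS}), one obtains
\[
\int y^a \eta^2 |\nabla_{x,y}\Tbu|^2\, dX\,dt \;\preceq\; \int y^a (|\nabla\eta|^2 + \eta|\partial_t\eta| + 1)\,|\Tbu|^2\, dX\,dt,
\]
with the implicit constant depending on $K$. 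Next I would apply this same inequality to $v^h := \delta_k^h \Tbu$, which by the commutation remarks above is itself a weak solution of an equation of the same type, namely $\partial_t v^h = (\partial_y^2 + \tfrac{a}{y}\partial_y + \mathbf L)v^h$ for $y>0$ with Neumann datum $\py v^h = V\,\delta_k^h\bu + (\delta_k^h V)\,\bu(\cdot+he_k,\cdot)$ on $B_1\times(-1,0]$. The extra inhomogeneous boundary term $(\delta_k^h V)\bu(\cdot+he_k,\cdot)$ is harmless: it is bounded in $L^2$ of the boundary uniformly in $h$ by \eqref{vassump} and the trace estimate for $\Tbu$ already guaranteed by Theorem \ref{energy1}. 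Running the Caccioppoli estimate for $v^h$ yields
\[
\int y^a \eta^2 |\nabla_{x,y}\,\delta_k^h\Tbu|^2\, dX\,dt \;\preceq\; \int_{\operatorname{supp}\eta} y^a |\delta_k^h\Tbu|^2\, dX\,dt \;+\; \text{(boundary terms)} \;\preceq\; \int y^a |\nabla_x\Tbu|^2\,dX\,dt + C,
\]
where the last step uses the standard fact that $\|\delta_k^h w\|_{L^2(K';y^a)} \preceq \|\partial_{x_k} w\|_{L^2(K;y^a)}$ for $K' \Subset K$, applied to $w = \Tbu$, whose $\nabla_x$ is already in $L^2_{loc}(y^a\,dX\,dt)$ by the energy estimate \eqref{energy} of Theorem \ref{energy1}.

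Finally, since the right-hand side is bounded uniformly in $h$, the weak compactness of difference quotients in the weighted space $L^2(y^a\,dX\,dt)$ (which is a genuine Hilbert space, so bounded sets are weakly precompact) gives $\nabla_{x,y}\,\partial_{x_k}\Tbu \in L^2_{loc}(\mathbb B_1^+\times(-1,0];\,y^a\,dX\,dt)$ for each $k=1,\dots,n$; that is exactly $\nabla\nabla_x\Tbu \in L^2_{loc}$. The main obstacle is the boundary term: one must verify that the degenerate trace inequality and the interpolation used to absorb $\int_{\{y=0\}} V\,\delta_k^h\bu\cdot(\eta^2\delta_k^h\bu)$ into the bulk gradient term produce constants independent of $h$, and that $\delta_k^h\bu$ (the trace of $\delta_k^h\Tbu$) is controlled — this is where the hypothesis $s\ge 1/2$ (hence $a \le 0$) and the $C^3$ (indeed just $C^1$ here) regularity of $V$ enter, and where one appeals to the coercivity Lemma \ref{coer} to handle the full vectorial system $\mathbf L$ rather than the scalar Laplacian. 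I expect the rest to be routine bookkeeping with cutoffs.
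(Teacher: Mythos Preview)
Your proposal is correct and follows essentially the same route as the paper's proof: a Caccioppoli-type energy estimate for the extension system (using Lemma~\ref{coer} for coercivity of the Legendre--Hadamard form), applied first to $\Tbu$ and then to its tangential difference quotients, which satisfy the same equation with a perturbed Neumann datum controlled via \eqref{vassump}. One correction: your remark that ``this is where the hypothesis $s\ge 1/2$\dots enter[s]'' is misplaced --- Theorem~\ref{w22} is stated and proved for all $s\in(0,1)$, and the trace/absorption argument for the boundary term works for any $a\in(-1,1)$; the restriction $s\ge 1/2$ only appears later in Section~\ref{s:mn} for the transformation \eqref{W}.
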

\begin{proof}
It suffices to show that the  following estimate holds
\begin{equation}\label{des}
\int_{\mathbb B_{1/2}^+ \times (-1/4, 0]} \left( | \nabla \Tbu|^2 + |\nabla \nabla_x \Tbu|^2  \right)  y^a dXdt \leq C \int_{\mathbb B_1^+\times (-1, 0]} |\Tbu|^2 y^a dXdt,
\end{equation}
where $C$ depends on $\delta_0$ in \eqref{lgo} and the constant in \eqref{vassump}.

We first note that $\Tbu$ solves the following parabolic system in $\{y >0\}$ for $i=1,..., n$
\begin{equation}\label{ei}
\sum_{j, \alpha, \beta=1}^nA^{ij}_{\alpha\beta} \Tbu^{j}_{\alpha \beta}  + \left(\f{a}{y}\dd_y + \dd_y^2\right) \Tbu^{i} = \Tbu^{i}_t,\end{equation}
where $$ A^{ij}_{\alpha \beta}= \left(\mu \delta^{ji}_{\alpha \beta} + (\mu +\lambda) \delta^{\alpha i}_{\beta j} \right).$$ Now using \eqref{ei} and the boundary condition in \eqref{ext1},  from the proof of Theorem 5.1 in \cite{BG}   we find that the following inequality estimate holds for all $r<1$
\begin{equation}\label{inter1}
\int_{\mathbb B_{1}^+ \times (-1, 0]}  |(\phi\Tbu)_y|^2 y^a + \int_{\mathbb B_1^+ \times (-1, 0]} \sum_{i,j, \alpha, \beta=1}^n A^{ij}_{\alpha\beta} (\phi \Tbu^{j})_{\alpha} (\phi \Tbu^{i})_{\beta} y^a\leq \frac{C}{(1-r)^2} \int_{\mathbb B_1^+ \times (-1, 0]} |\Tbu|^2y^a,
\end{equation}

where $\phi$ is a suitable cut-off such that $\phi \equiv 1$ in $\mathbb B_r \times (-r^2, 0]$ and vanishes outside $\mathbb B_{1} \times (-1, 0]$.

Then from \eqref{lgo}, we note  that the constant matrix $(A^{ij}_{\alpha\beta})$ satisfies the Legendre-Hadamard condition \eqref{lg} with $\nu=\delta_0$.  Thus using the coercivity result in Lemma \ref{coer} we deduce the  following energy estimate from \eqref{inter1} for all $r <1$\begin{equation}\label{en1}
\int_{\mathbb B_r^+ \times (-r^2, 0]} |\nabla \Tbu|^2 y^a dXdt  \leq \frac{C}{(1-r)^2} \int_{\mathbb B_1^+ \times (-1, 0]} |\Tbu|^2 y^a.
\end{equation}
Moreover  for $i=1,...,n$, if we let \begin{equation}\label{diff} \tau_{h, i} \Tbu(X,t)= \frac{\Tbu(X+ he_i,t) - \Tbu(X,t)}{h}, \end{equation} then we note that $\bf v=\tau_{h,i} \Tbu$ solves the following problem

\begin{equation}\label{ext2}
\begin{cases}
  \sum_{j, \alpha, \beta=1}^nA^{ij}_{\alpha\beta} \bf v^{j}_{\alpha \beta}  + \left(\f{a}{y}\dd_y + \dd_y^2\right) \bf v^{i} = \bf{ v}^{i}_t , & \tn{ for $i=1,..., n$ and } y>0, \\
    \py \bf v=( \tau_{h, i} V) \bu + V(\cdot+  he_i,\cdot) \bf v   & \text{in $B_1 \times (-1, 0]$}.
\end{cases}    
\end{equation}
Since $V$ satisfies \eqref{vassump}, we find that $\tau_{h, i} V$ is bounded independent of $h$. Moreover we have  that $\bf v$ satisfies the same equation as $\bu$ in the "bulk" (i.e. in $\{y >0\})$. Therefore it again follows from the proof of Theorem 5.1 in \cite{BG} using also the coercivity result in Lemma \ref{coer} that $\tau_{h, i} \bu= \bf v$ satisfies the following energy estimate 
\begin{equation}\label{en2}
\int_{\mathbb B_{1/2}^+ \times (-1/4, 0]} |\nabla \tau_{h, i} \Tbu |^2 y^a dXdt  \leq C  \int_{\mathbb B_{3/4}^+ \times (-9/16, 0]} \left(  |\tau_{h, i} \Tbu|^2  + |\Tbu|^2 + |\nabla \Tbu|^2 \right) y^a dXdt.
\end{equation}
Now for all $h$ small enough, it follows that

\[
\int_{\mathbb B_{3/4}^+ \times (-9/16, 0]} |\tau_{h, i} \Tbu|^2 y^a dXdt  \leq C \int_{\mathbb B_{5/6}^+ \times (-25/36, 0]} |\nabla \Tbu|^2 y^a.\]
Using this inequality along with \eqref{en1} in \eqref{en2}, we can finally assert the following estimate  holds
\begin{equation}\label{fn0}
\int_{\mathbb B_{1/2}^+ \times (-1/4, 0]}  |\nabla \tau_{h, i} \Tbu|^2 y^a dX dt \leq C  \int_{\mathbb B_1^+ \times (-1, 0]} |\Tbu|^2 y^a.
\end{equation}
By letting $h \to 0$ in \eqref{fn0}, we find that \eqref{des} follows.

\end{proof}
\subsection*{Reduction to a fractional heat  type extension problem}

In order to obtain further regularity of solutions,  we   use a reduction technique in our analysis which is borrowed from \cite{Gur, AITY, EINT, Alessandrini_Morassi_2001}.  Such a reduction  is  also crucial for the proof of our space-like strong unique continuation result in Theorem \ref{main}.

Note that $\Tbu$ solves the extension problem \eqref{ext1}.
Now  by formally applying the divergence operator  (with respect to $x$ variable) in \eqref{ext1}, we find
\begin{align}
\begin{cases}\label{extended parabolic Lame for div(u)}
   \dd_t (\DTbu) = \left( \dd_y^2 + \f{a}{y}\dd_y\right) \DTbu + \tn{div}\Lb(\Tbu) , & \tn{ for } y>0, \\
    \DTbu(x,t,0)=\tn{div}\bu(x,t), & \tn{ for } (x,t)\in\Rb^{n+1}.
\end{cases}    
\end{align}
We then compute the term $\tn{div}\Lb(\Tbu)$ in the following way
\begin{align}\label{divL} 
   \tn{div}\Lb(\Tbu) = \mu \Delta (\DTbu) + (\mu+\ld)\Delta(\DTbu) 
  = (2\mu+\ld) \Delta(\DTbu).
\end{align}
Before proceeding further, we would like to alert the reader that in this subsection, $\Div \Tbu$ will refer to the tangential divergence $\Div_x \Tbu$.

Now  let
\begin{align*}
    \bU = \begin{pmatrix}
          \Tbu \\
          \tn{div} \Tbu
          \end{pmatrix}
          := \begin{pmatrix}
          \bU^1 \\
          \bU^2 \\
          \vdots\\
          \bU^{n+1}
          \end{pmatrix}.
\end{align*} 
Then by combining \eqref{ext1} and \eqref{divL} we observe that
\begin{align}\label{AM transform}
    \left( \dd_t  - \left( \dd_y^2 + \f{a}{y}\dd_y\right) \right) \bU = B_0 \Delta \bU + B_1 \nabla\bU.
\end{align}
where  $B_0$ and $B_1$ are as follows%
\begin{align*}
    B_0 = \begin{pmatrix}
          \mu & 0 & 0 & \dots  & 0 \\
          0 & \mu & 0 & \dots  & 0 \\
           \vdots & \vdots & \vdots & \ddots & \vdots \\
          0 & 0 & 0 & \dots  & (2\mu+\ld)
          \end{pmatrix}, \qd  
     B_1\nabla\bU = \begin{pmatrix} 
                     (\mu+\ld)\dd_{x_1}\bU^{n+1} \\
                     (\mu+\ld)\dd_{x_2}\bU^{n+1} \\
                    \vdots \\
                    0
                   \end{pmatrix}    . 
\end{align*}
Moreover, formally one has
\begin{align*}
    \bU(x,t,0) = \begin{pmatrix}
          \bu \\
          \tn{div} \bu
          \end{pmatrix}.
\end{align*}
Now taking divergence w.r.t $x$ variables we find
\begin{align*}
    \lim_{y\to 0+} y^a \dd_y (\tn{div}\bu) =  V(x,t)\tn{div}\bu +  \nabla_x V(x,t) \cdot\bu
\end{align*}
As a result, we have
\begin{align}\label{matr}
    \lim_{y\to 0+} y^a \dd_y \bU =  \begin{pmatrix}
          V(x,t) & 0 & 0 & \dots  & 0 \\
          0 & V(x,t) & 0 & \dots  & 0 \\
           \vdots & \vdots & \vdots & \ddots & \vdots \\
          \dd_1 V & \dd_2 V & \dd_3 V & \dots  & V(x,t)
          \end{pmatrix} \times  \bU= \hat{V} \bU.
\end{align}
To reduce \eqref{AM transform} into a further accessible form, we make the following change of variables 
\begin{align}\label{star}
    {\bU}^{*}(x,y,t) = \begin{pmatrix}
          \bU^1\left(\sqrt{\mu}x_1,\dots,\sqrt{\mu}x_n,y, t\right) \\
         \bU^2\left(\sqrt{\mu}x_1,\dots,\sqrt{\mu}x_n,y,t\right)\\
          \vdots\\
        \bU^{n+1}\left(\sqrt{2\mu+\ld}x_1,\dots,\sqrt{2\mu+\ld}x_n,y,t\right).
          \end{pmatrix}
\end{align}
We then notice that in $\{y>0\}$, $\bU^{\star}$ solves
\begin{align*}
    y^a\dd_t {\bU}^{\star} - \Div \left(y^a \nabla_{x,y}\bU^{\star}\right) - y^a B \nabla {\bU}^{\star} = 0,
\end{align*}
where

\begin{align}\label{drift1}
    B\nabla\bU^* = \begin{pmatrix} 
                     \f{(\mu+\ld)}{\sqrt{\mu}}\dd_1(\bU^*)^{n+1} \\
                     \f{(\mu+\ld)}{\sqrt{\mu}}\dd_2(\bU^*)^{n+1} \\
                    \vdots \\
                    0
                   \end{pmatrix}. 
\end{align}
Moreover, the weighted Neumann condition also gets transformed as 
\begin{equation}\label{dert}
\py \bU^*= \tilde V \bU^*,
\end{equation}

where  $\tilde V$ has a similar structure as the matrix valued function in \eqref{matr}. Now from the twice Sobolev  differentiability result  as  in Theorem \ref{w22}, we find that the formal computations  in \eqref{extended parabolic Lame for div(u)}-\eqref{dert} above can be justified  by weak type arguments and we thus have the following   result on the reduction of the  parabolic Lam\'e extension problem to an almost decoupled parabolic system.

\begin{lemma}\label{red1}
Let  $\bU^{*}$ be as in \eqref{star} corresponding to $\Tbu$ which solves the  extension problem \eqref{ext1}. Then $\bU^*$  is a weak solution to the following problem

\begin{equation}\label{staru}
\begin{cases}
  y^a\dd_t {\bU}^* - \Div \left(y^a \nabla_{x,y}\bU^* \right) - y^a B \nabla {\bU^*} = 0\ \text{in $\mathbb B_1^+ \times (-1, 0]$},
  \\   
   \py \bU^*=  \tilde V \bU^* \text{in $B_1 \times (-1, 0]$}.
  \end{cases}
\end{equation}
\end{lemma}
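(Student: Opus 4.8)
The plan is to justify rigorously the formal computations in \eqref{extended parabolic Lame for div(u)}--\eqref{dert}, working entirely at the level of weak formulations. First I would recall that by Theorem \ref{energy1} and the arguments behind \eqref{ext1}, $\Tbu$ is a weak solution to the Lam\'e extension problem, meaning that for every vector-valued test function $\Phi \in C_0^\infty(\mathbb B_1^+\times(-1,0])$ (with no support restriction at $\{y=0\}$) one has the integral identity
\begin{equation}\label{wkform}
\int y^a \Tbu_t \cdot \Phi + \int y^a \Big( \mu \nabla_{x} \Tbu : \nabla_x \Phi + (\mu+\lambda) (\Div_x \Tbu)(\Div_x \Phi) + \partial_y \Tbu \cdot \partial_y \Phi \Big) = \int_{\{y=0\}} V \bu \cdot \Phi.
\end{equation}
The crucial new input is Theorem \ref{w22}, which gives $\nabla\nabla_x \Tbu \in L^2_{loc}(y^a\,dXdt)$; in particular $\Div_x \Tbu \in L^2_{loc}(y^a)$ has a tangential gradient in $L^2_{loc}(y^a)$, and its trace on $\{y=0\}$ equals $\Div_x \bu$ in the appropriate sense. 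This is exactly what is needed to make sense of the scalar component $\bU^{n+1}=\Div_x\Tbu$ and of the mixed derivative term $\nabla_x V\cdot\bu$ appearing in \eqref{matr}.

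Next I would derive the equation for $\Div_x \Tbu$ by choosing test functions of the form $\Phi = \nabla_x \eta$ in \eqref{wkform} for scalar $\eta \in C_0^\infty$, or equivalently by testing the full system against $\nabla_x \eta$ and integrating by parts in $x$ (which is legitimate since all second-order tangential derivatives of $\Tbu$ are $L^2_{loc}(y^a)$). Using $\mu\Delta_x + (\mu+\lambda)\Delta_x = (2\mu+\lambda)\Delta_x$ on $\Div_x\Tbu$ as in \eqref{divL}, one obtains that $\Div_x\Tbu$ is a weak solution of the scalar degenerate equation $y^a\partial_t(\Div_x\Tbu) - \Div_{x,y}(y^a(2\mu+\lambda\text{-weighted})\nabla\Div_x\Tbu) = 0$ with Neumann datum $V\Div_x\bu + \nabla_x V\cdot\bu$; the boundary term is computed by moving the $\nabla_x$ off of $\eta$ and onto $V\bu$ on $\{y=0\}$, producing precisely the matrix $\hat V$ of \eqref{matr}. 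Assembling the $\Tbu$-equations and this scalar equation into the vector $\bU = (\Tbu, \Div_x\Tbu)^t$ gives the weak version of \eqref{AM transform}--\eqref{matr}, with anisotropic leading coefficient $B_0$ and first-order term $B_1\nabla\bU$. Finally, the anisotropic rescaling \eqref{star} in the $x$-variables — rescaling the first $n$ components by $\sqrt\mu$ and the last by $\sqrt{2\mu+\lambda}$ — turns $\Div(B_0\nabla\,\cdot\,)$ into the ordinary Laplacian $\Delta_{x,y}$ (the $y$-variable is untouched, so the weight $y^a$ is unaffected), converting \eqref{AM transform} into the drift equation of \eqref{staru} with $B$ as in \eqref{drift1}, and transforming $\hat V$ into a matrix $\tilde V$ of the same structural type (diagonal entries $V$ composed with the rescaling, bottom row involving rescaled $\partial_i V$). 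Each of these steps is a change of variables in a weak formulation, which is routine once the differentiability in Theorem \ref{w22} is available.

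The main obstacle is the justification that one may legitimately test \eqref{wkform} against $\nabla_x\eta$ and perform the integration by parts in $x$ that produces the $\Div_x$-equation together with its boundary term — i.e., verifying that the trace of $\Div_x\Tbu$ on $\{y=0\}$ is $\Div_x\bu$ and that the boundary integral $\int_{\{y=0\}} V\bu\cdot\nabla_x\eta$ can be integrated by parts to $-\int_{\{y=0\}} (\Div_x(V\bu))\,\eta = -\int_{\{y=0\}}(V\Div_x\bu + \nabla_x V\cdot\bu)\eta$. This requires that $\bu\in H^s$ has enough tangential regularity for $\Div_x(V\bu)$ to make sense in the relevant dual space, which follows from \eqref{vassump} ($V\in C^3$) together with the interior estimates: away from $\{y=0\}$ everything is smooth by standard parabolic regularity for the bulk system, and near $\{y=0\}$ the trace statements follow from the weighted Sobolev embedding/trace theory used already in \cite{BG, BKS} combined with Theorem \ref{w22}. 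I would therefore phrase the proof as: (i) record the weak formulation of \eqref{ext1}; (ii) invoke Theorem \ref{w22} to upgrade the tangential regularity; (iii) test against $\nabla_x\eta$ and integrate by parts to obtain the weak $\Div_x$-equation with the stated Neumann datum; (iv) assemble $\bU$ and apply the anisotropic rescaling \eqref{star}; (v) read off that $\bU^*$ is a weak solution of \eqref{staru} with $B$, $\tilde V$ of the asserted form. $\qed$
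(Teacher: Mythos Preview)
Your proposal is correct and follows essentially the same approach as the paper: the paper states only that ``from the twice Sobolev differentiability result as in Theorem \ref{w22}, we find that the formal computations in \eqref{extended parabolic Lame for div(u)}--\eqref{dert} above can be justified by weak type arguments,'' and you have spelled out precisely what those weak type arguments are (test \eqref{ext1} against $\Phi=\nabla_x\eta$, use the $L^2_{loc}(y^a)$ bound on $\nabla\nabla_x\Tbu$ to integrate by parts in $x$ both in the bulk and on $\{y=0\}$, then rescale). No substantive difference from the paper's route.
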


 Further regularity for the parabolic Lam\'e extension problem \eqref{ext1}  will follow from the  regularity results for \eqref{staru} that we develop subsequently.

In this direction, we first prove the following H\"older continuity result  for a general class of extension problem modelled on \eqref{staru} via compactness arguments similar to that employed in \cite{BS}.

\begin{thrm}\label{hold}
Let $\bU$ be a weak solution to the following problem
\begin{equation}\label{res1}
\begin{cases}
  y^a\dd_t {\bU} - \Div \left(y^a \nabla_{x,y}\bU \right) - y^a B \nabla {\bU} = 0\ \text{for $y>0 \cap \mathbb B_1^+ \times (-1, 0]$},
  \\   
   \py \bU=  W \bU + W_1\ \text{in $B_1 \times (-1, 0]$},
  \end{cases}
\end{equation}
where $B, W, W_1$ are bounded. Then there exists $\alpha= \alpha(a)$ such that $\bU \in H^{\alpha}(\overline{\mathbb B_{1/2}^+} \times (-1/4, 0])$. 
\end{thrm}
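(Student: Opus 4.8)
The plan is to establish the Hölder continuity of weak solutions to \eqref{res1} by a standard \emph{compactness-normalization scheme} in the spirit of \cite{BS}, treating the Neumann datum $W\bU + W_1$ as a lower-order perturbation of the homogeneous problem with vanishing Neumann condition. Let me outline the four main stages.

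\textbf{Step 1: Caccioppoli and De Giorgi--Nash--Moser type estimates for the bulk equation.} First I would record the basic energy estimate for weak solutions of \eqref{res1}: testing the equation with $\phi^2 \bU$ for a suitable parabolic cutoff $\phi$, using the $A_2$-weight $y^a$ and absorbing the drift term $y^a B\nabla \bU$ (which is admissible since $B$ is bounded and $\nabla_{x,y}\bU \in L^2_{loc}(y^a\,dXdt)$ by the definition of weak solution) together with the boundary term
\[
\int_{B_1\times(-1,0]} (W\bU + W_1)\cdot \phi^2 \bU,
\]
which is controlled by boundedness of $W, W_1$ and the trace/Poincaré inequality for the $y^a$-weighted space. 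This yields, for nested cylinders $\mathbb Q_{r}^+ \subset \mathbb Q_{R}^+$,
\[
\int_{\mathbb Q_r^+} y^a |\nabla_{x,y}\bU|^2 \leq \frac{C}{(R-r)^2}\int_{\mathbb Q_R^+} y^a |\bU|^2 + C\int_{\mathbb Q_R^+} y^a\bigl(|\bU|^2 + 1\bigr).
\]
Since the weight $y^a$ with $a=1-2s \in (-1,1)$ is in the parabolic Muckenhoupt class $A_2$, the Moser iteration of \cite{Cruz-Uribe--Rios} or \cite{Chiarenza--Serapioni} (or the even-reflection argument to a weighted equation on the whole ball, using that the Neumann condition is a bounded perturbation) gives local boundedness of $\bU$ in $\overline{\mathbb B_{1/2}^+}\times(-1/4,0]$.

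\textbf{Step 2: Compactness lemma.} The heart of the argument is an oscillation-decay estimate obtained by contradiction. I claim there exist $\theta \in (0,1)$ and $\varepsilon_0 > 0$ such that if $\bU$ is a weak solution of \eqref{res1} in $\mathbb Q_1^+$ with $\|\bU\|_{L^\infty(\mathbb Q_1^+)} \leq 1$, $\|W\|_\infty + \|W_1\|_\infty + \|B\|_\infty \leq \varepsilon_0$, then
\[
\operatorname*{osc}_{\mathbb Q_\theta^+} \bU \leq \tfrac12 \operatorname*{osc}_{\mathbb Q_1^+} \bU.
\]
Suppose not: there are solutions $\bU_k$ with coefficients $B_k, W_k, W_{1,k}$ of sup-norm $\to 0$ for which the decay fails. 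After subtracting constants and normalizing, by the Caccioppoli estimate of Step 1 the $\bU_k$ are bounded in the weighted Sobolev space, hence (by the compact embedding for the $A_2$-weighted parabolic Sobolev space, cf.\ the Rellich--Kondrachov statement in \cite{BS}) a subsequence converges in $L^2(y^a\,dXdt)$ to some $\bU_\infty$. Passing to the limit in the weak formulation, the drift and the Neumann terms vanish, so $\bU_\infty$ solves the homogeneous system
\[
y^a \partial_t \bU_\infty - \Div(y^a \nabla_{x,y}\bU_\infty) = 0,\qquad \py \bU_\infty = 0.
\]
This decoupled equation is exactly the extension equation for the fractional heat operator component-wise, whose solutions are $C^\infty$ in $x,t$ and Hölder (indeed smooth, after even reflection in $y$) up to $\{y=0\}$ — this is classical, e.g.\ \cite{NS, ST, BG1}. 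Hence $\bU_\infty$ satisfies the oscillation decay with some universal $\theta$, contradicting the non-decay that persists in the limit.

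\textbf{Step 3: Iteration and smallness reduction.} For general bounded $B, W, W_1$ (not necessarily small), I would rescale: passing to $\bU(\rho x, \sqrt\rho\, y, \rho^2 t)$ for small $\rho$ multiplies $B$ by $\rho$, $W$ by $\sqrt\rho$ and $W_1$ by $\rho$ (the $\sqrt\rho$ on $W$ because the Neumann operator $\py$ scales like $y^{a}\partial_y \sim \rho^{-a/2}$ against $\rho$), so after fixing $\rho$ small enough the smallness hypothesis of Step 2 holds. Iterating the oscillation-decay inequality over the geometric sequence of cylinders $\mathbb Q_{\theta^j\rho}^+$ yields $\operatorname{osc}_{\mathbb Q_{r}^+}\bU \leq C r^\alpha$ with $\alpha = \alpha(a) = -\log 2 / \log\theta$, first at the boundary point $(0,0)$ and then, by translating the center along $\{y=0\}$ and combining with interior parabolic regularity in $\{y>0\}$ (where the equation is uniformly parabolic on compact subsets), at every point of $\overline{\mathbb B_{1/2}^+}\times(-1/4,0]$. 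Covering and using the distance \eqref{distance} gives $\bU \in H^\alpha(\overline{\mathbb B_{1/2}^+}\times(-1/4,0])$.

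\textbf{Main obstacle.} The delicate point is the compactness step: one needs the correct weighted Sobolev space with the $A_2$-weight $y^a$, the right trace theorem so that the boundary term in the Caccioppoli estimate is genuinely lower-order, and — crucially — the compact embedding that allows passage to the limit. One must also be careful that the drift term $y^a B\nabla\bU$ does not destroy the energy estimate when $a<0$; here boundedness of $B$ together with Young's inequality (absorbing $\varepsilon \int y^a|\nabla\bU|^2$) suffices, but the bookkeeping with the weight must be done carefully. Finally, checking that the limiting homogeneous solution $\bU_\infty$ really is Hölder up to $\{y=0\}$ relies on the regularity theory for the fractional-heat extension (available from \cite{NS, ST, BG1}); since the system \eqref{res1} decouples in the limit, this reduces to the known scalar case applied componentwise.
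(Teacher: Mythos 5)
Your proposal follows essentially the same compactness-plus-iteration scheme as the paper: approximate by solutions of the homogeneous (drift-free, zero-Neumann) extension system via an Aubin--Lions type compactness argument, exploit the smoothness of the limiting $U_0$ to get a one-step decay, iterate over geometric scales, and finally glue the boundary decay with interior parabolic estimates in $\{y>0\}$ by a two-case argument depending on the distance to $\{y=0\}$. The paper implements the decay in $L^2(y^a)$ Campanato form rather than via a De Giorgi oscillation estimate, which avoids the need to establish $L^\infty$ bounds by a separate Moser iteration, but this is a technical rather than a structural difference.

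There is, however, a concrete error in your Step 3. The scaling $\bU(\rho x, \sqrt\rho\, y, \rho^2 t)$ is anisotropic in $x$ versus $y$ and therefore does not preserve the operator $\Div(y^a\nabla_{x,y}\,\cdot\,)$: under this map $\partial_{xx}$ picks up a factor $\rho^2$ while $\partial_{yy}$ only picks up $\rho$, so the principal part of the equation is destroyed. The correct scaling, and the one the paper uses, is isotropic in $X=(x,y)$, namely $\bU_{r_0}(X,t)=\bU(r_0 X, r_0^2 t)$, under which the drift scales as $B\mapsto r_0 B$ and the Neumann datum scales as $W\mapsto r_0^{1-a}W=r_0^{2s}W$ (and similarly $W_1\mapsto r_0^{2s}W_1$), since $\py$ picks up a factor $r_0^{1-a}$. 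Your factor of $\sqrt\rho$ on $W$ and $\rho$ on $W_1$ are both wrong. Related to this, you do not record the constraint on $\alpha$ needed for the iteration to close: in the paper the normalization at step $k$ sends $W_1\mapsto r^{k(2s-\alpha)}W_1$, and one needs $\alpha<\min(2s,1)$ so that the smallness condition persists at every scale. Without this constraint the induction breaks down. Both defects are fixable without changing the overall strategy, but as written the scaling computation is incorrect.
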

\begin{proof}
\emph{Step 1:}
\emph{Claim 1:} We first show that given $\ve>0$, there exists $\delta>0$ such that if
\begin{equation}\label{sm1}
\begin{cases}
\int_{\mathbb B_1^+ \times (-1, 0]} |\bU|^2 y^a \leq 1, 
\\
|B|, |W|, |W_1| \leq \delta,
\end{cases}
\end{equation}
then there exists $U_0$ which solves  the fractional heat extension system
\begin{equation}\label{res2}
\begin{cases}
  y^a\dd_t U_0 - \Div \left(y^a \nabla_{x,y}U_0 \right)  = 0\ \text{for $y>0 \cap \mathbb B_{1/2}^+ \times (-1/4, 0]$},
  \\   
   \py U_0=  0\ \text{in $B_{1/2} \times (-1/4, 0]$},
  \end{cases}
\end{equation}
such that
\begin{equation}\label{close}
\int_{\mathbb B_{1/2}^+ \times (-1/4, 0]} |\bU - U_0|^2 y^a \leq \ve.
\end{equation}
We argue by contradiction. If not, there exists an $\ve_0>0$ such that no choice of $\delta$ works.  That means for each $k=1,2,...$, there exists $U^k$ which solves \eqref{res1} corresponding $B^k, W^k, W^k_1$ satisfies 
\begin{equation}\label{s1}
\begin{cases}
\int_{\mathbb B_1^+ \times (-1, 0]} |U^k|^2 y^a \leq 1,
\\
|B^k|, |W^k|, |W^k_1| \leq \frac{1}{k},
\end{cases}
\end{equation}
 such that  for all $k$, we have
 \begin{equation}\label{nclose}
 \int_{\mathbb B_{1/2}^+ \times (-1/4, 0]} |U^k - U_0|^2 y^a > \ve_0
 \end{equation}
 for all such $U_0$ which solves \eqref{res2}. 
Now from the proof of Theorem 5.1 in \cite{BG}   and the bounds in \eqref{s1}, it follows that 
 \begin{equation}\label{s2}
 \int_{\mathbb B_{1/2}^+ \times (-1/4, 0]} |\nabla U^k|^2 y^a \leq C.
 \end{equation}
 Moreover from \eqref{s1} and \eqref{s2}, it follows in a standard way ( see for instance the proof of Theorem  4.1 in \cite{AK}) that $\partial_ t U^k \in L^{2}((-1/4, 0]; (W^{1,2}(\mathbb B_{1/2}^+, y^a dX))^*)$ are uniformly bounded independent of $k$. Here $(W^{1,2}(\mathbb B_{1/2}^+, y^a dX))^*$ denotes the dual sapce.  Therefore  by  the Aubin-Lions compactness lemma  ( \cite{Au}), we have that up to a subsequence $k \to \infty$, $U^k$'s converge to some $U^0$ in $L^{2}(\mathbb B_{1/2}^+ \times (-1/4, 0])$ which by standard weak type arguments, is a weak solution to \eqref{res2}. This contradicts \eqref{nclose} for large enough $k$'s and proves \emph{Claim 1}. 
 
 \emph{Step 2:}
 We now show that there exists  universal $r, \delta, \alpha \in (0,1)$ such that  if \eqref{sm1} holds, then there exists a vector $b_1$ with universal bounds such that
 \begin{equation}\label{sm2}
 \int_{\mathbb B_r^+ \times (-r^2, 0]} |\bU - b_1|^2 y^a \leq r^{n+3+a+2\alpha}.
 \end{equation}
 From \emph{Step 1}, we obtain the existence of $U_0$. Since $U_0$ is smooth( see for instance \cite[Theorem 1.1]{BG1}), we have that 
 the following inequality holds
 \begin{equation}\label{sm4}
 \int_{\mathbb B_r^+ \times (-r^2, 0]} |U_0 - U_0(0, 0))|^2 y^a \leq C r^{n+3+a+ 2}.
 \end{equation}  
 We now choose $\alpha$ such that 
 \begin{equation}\label{kj}
 \alpha < \text{min}( 2s, 1).\end{equation}
  We then choose $r< 1/2$ such that 
 \begin{equation}\label{sml}
C r^{n+3+a+ 2} = \frac{1}{4} r^{n+3+a +2\alpha}.
\end{equation}
Subsequently we let
\[
\ve= \frac{1}{4} r^{n+3+a +2\alpha}
\]
which decides the choice of $\delta$. We thus have
\begin{equation}\label{kj1}
\int_{\mathbb B_{r}^+ \times (-r^2, 0]} |\bU - U_0|^2 y^a \leq \frac{1}{4} r^{n+3+a +2\alpha}.
\end{equation}
\eqref{sm2} thus follows from \eqref{sm4} and \eqref{kj1}  with $b_1= U_0(0,0)$ in view of the fact that \eqref{sml} holds.

\emph{Step 3:}
With the smallness assumption as in \emph{Step 1} and \emph{Step 2}, we now show that for $k=1,2,..$,  we have that there exists $b_k$ such that
\begin{equation}\label{sm5}
 \int_{\mathbb B_{r^k}^+ \times (-r^{2k}, 0]} |\bU - b_k|^2 y^a \leq r^{k(n+3+a+2\alpha)},
 \end{equation}
 such that 
 \begin{equation}\label{bk}
 |b_k -  b_{k+1}| \leq C r^{k\alpha}.
 \end{equation}
 For $k=1$, it is as in \emph{Step 2}. We assume that \eqref{sm5} holds up to $k$. We now show this implies that \eqref{sm5} holds for $k+1$. 
 By letting
 \[
 \tilde U= \frac{\bU(r^kX, r^{2k} t)- b_k}{r^{k\alpha}}
 \]
 we find that $\tilde U$ solves  \eqref{res1} corresponding to 
 \[
 \tilde B(X,t)= r^{k}B(r^kX, r^{2k}t), \tilde W(X, t)= r^{2ks} W(r^k X, r^{2k} t), \tilde W_1(X,t)= r^{k(2s - \alpha)} W_1(r^kX, r^{2k} t). 
 \]
 By change of variable it is seen from \eqref{sm5} that the following holds
 \[
 \int_{\mathbb B_1^+ \times (-1, 0]} |\tilde U|^2 y^a\leq 1.
 \]

 Furthermore the smallness condition in \eqref{sm1} is seen to be verified since $r< 1$ and $\alpha$ is chosen as in \eqref{kj}. Therefore by \emph{Step 2}, there exists   $\tilde b$, such that
 \begin{equation}\label{cls1}
 \int_{\mathbb B_r^+ \times (-r^2, 0]} |\tilde U - \tilde b|^2 y^a \leq r^{n+3+a+2\alpha}. 
 \end{equation}
 By rescaling back to $\bU$, we find that \eqref{sm5} follows from \eqref{cls1} with $b_{k+1} = b + r^{k \alpha} \tilde b$.

 \emph{Step 4:} (Conclusion) We first note that by rescaling as follows
 \[
 \bU_{r_0}(X,t) = \bU_{r_0}(r_0X, r_0^2 t)
 \]
 we find that $\bU_{r_0}$ solves \eqref{res1} with $B_{r_0}(X,t)= r_0 B(r_0X, r_0^2 t)$, $W_{r_0}(X, t)= r_0^{2s} W(r_0X, r_0^2 t)$ and $(W_1)_{r_0}(X,t)=r_0^{2s} W_1(r_0X, r_0^2t)$. Thus by choosing $r_0$ small enough, we can ensure that the smallness condition in \eqref{sm1} can be ensured.

 Now  from \eqref{sm5}, it follows by a standard real analysis argument that the following estimate holds for all $r < 1/2$ with $b_0= \text{lim}_{k \to \infty} b_k$
 \begin{equation}\label{cls2}
  \int_{\mathbb B_r^+ \times (-r^2, 0]} |\bU - b_0|^2  y^a \leq C r^{n+3+a+2\alpha}. 
  \end{equation}
    
  Similarly we have by translation in the tangential directions that  for every $(x_0,0, t_0) \in B_{1/2} \times \{0\} \times (-1/4, 0]$,  there exists $b(x_0, t_0)$ such that
  \begin{equation}\label{cls4}
  \int_{\mathbb B_r^+((x_0, 0)  \times (t_0-r^2, t_0]} |\bU - b(x_0, t_0)|^2  y^a \leq C r^{n+3+a+2\alpha}. 
  \end{equation}  
  Moreover the fact that  assignment $(x_0, t_0) \to  b(x_0, t_0)$ is $H^{\alpha}$ regular also follows in a standard way from \eqref{cls4}.  Now given the boundary decay estimate in \eqref{cls4}, combined with the fact that $\bU$ solves a uniformly parabolic PDE away from $y>0$, one can combine \eqref{cls4} with interior estimates  ( see for instance the proof of Theorem 2.3 in \cite{BBG}) to assert that $\bU$ is in $H^{\alpha}(\overline{\mathbb B_{1/2}^+} \times (-1/4, 0])$. We nevertheless provide the details for the sake of completeness.
  
  
  Let now $(X_1, t_1)$ and $(X_2, t_2)$ be two points in $\mathbb B_{1/2}^+ \times (-1/4, 0]$.  Our objective is to show that the following estimate holds
  \begin{equation}\label{des1}
  |\bU(X_1, t_1) - \bU(X_2, t_2)| \leq C |(X_1, t_1) - X_2, t_2)|^{\alpha}.
  \end{equation}

  Without loss of generality, we assume that $y_1 \leq y_2$. There are two cases:
  \begin{enumerate}
  \item $|(X_1, t_1) - (X_2, t_2)| \leq \frac{1}{4} y_1$;
  \item $|(X_1, t_1) - (X_2, t_2)| \geq \frac{1}{4} y_1$.  
  \end{enumerate}
  If $(1)$ occurs, then applying \eqref{cls4} with $r= \frac{y_1}{2}$, it ensues that the following $L^{2}$ bound is satisfied by
  $U^0(X,t)= \bU(X,t) - b(x_1, t_1)$
  \begin{equation}\label{try}
  \int_{\mathbb B_{\frac{y_1}{2}} (X_1) \times (t_1-\frac{y_1^2}{4},  t_1]} (U^0)^2 y^a \leq C (y_1)^{n+3+a+2\alpha}.
  \end{equation}
  We then observe that the rescaled function \begin{equation}\label{resc1}\tilde U_0(X, t)=  U_0( x_1 + y_1 x, y_1y, t_1 +y_1^2t)\end{equation} satisfies in $\mathbb B_{1/2}((0,1)) \times (-1/4, 0]$ a uniformly parabolic system with bounded  drift. Thus from the classical parabolic theory using \eqref{try} one has 
  \begin{align}\label{kest}
 & |\tilde U^0(X, t) - \tilde U^0(0,1, 0)| \leq C\left( \int_{\mathbb B_{1/2}((0,1)) \times (-1/4, 0] }  (\tilde U^0)^2 \right)^{1/2}  |(X,t) - (0,1, 0)|^{\alpha}\\
 & \leq \frac{C}{y_1^{\frac{n+3}{2}} } \left(  \int_{\mathbb B_{\frac{y_1}{2}} (X_1) \times (t_1-\frac{y_1^2}{4},  t_1]} (U^0)^2 \right)^{1/2} |(X,t) - (0,1,0)|^{\alpha}\ \text{(change of variable)}\notag\\
 & \leq  \frac{C}{y_1^{\frac{n+3+a}{2}} } \left(  \int_{\mathbb B_{\frac{y_1}{2}} (X_1) \times (t_1-\frac{y_1^2}{4},  t_1]} (U^0)^2 y^a\right)^{1/2} |(X,t) - (0,1,0)|^{\alpha}\notag
 \\
 & \leq Cy_1^{\alpha} |(X,t) - (0,1,0)|^{\alpha}.\notag
  \end{align}
In the second inequality in \eqref{kest} above, we used that $y \sim y_1$ in $\mathbb B_{y_1/2}(X_1)$ and in the last inequality we used the decay estimate \eqref{try}. The estimate  \eqref{des1}  follows in this case by applying the inequality \eqref{kest} with $(X,t)= (\frac{x_1 - x_2}{y_1}, \frac{y_2}{y_1}, \frac{t_2 - t_1}{y_1^2})$ and 
 by scaling back to $\bU$.
 
 If instead $(2)$ occurs, then we  first note that for $i=1,2$,  $\bU_i = \bU - b(x_i, t_i)$  solves the PDE \eqref{res1} in $\mathbb B_{y_i/2} \times (t_i - \frac{y_i^2}{4}, t_i]$.  After rescaling as above, using \eqref{try}( which also holds for $\bU_2$ with $y_1$ replaced by $y_2$), from the classical  estimates, we  obtain the following inequality
 \begin{equation}\label{im1}
 |\bU(X_i,t_i) - b(x_i, t_i)| \leq y_i^{\alpha}.
 \end{equation}
 Moreover, the triangle inequality gives
 \begin{equation}\label{im2}
 |y_2| = |X_2 - x_2| \leq |X_2 - X_1| + |X_1 - x_1| + |x_1 - x_2| \leq 6|X_1 - X_2|.
 \end{equation}
 Thus from \eqref{im1} and \eqref{im2} we have
 \[
 |\bU(X_1, t_1) - \bU(X_2, t_2)| \leq \sum_{i=1}^2  |\bU(X_i,t_i) - b(x_i, t_i)| + |b(x_1, t_1) - b(x_2, t_2)| \leq C |(X_1, t_1) - (X_2, t_2)|^{\alpha}.
 \]
 This finishes the proof of the theorem.

\end{proof}

Now following the H\"older continuity result in Theorem \ref{hold} which is the analogue of Theorem 5.1 in \cite{BG}, given such an $\alpha$,  starting with $k=1, 2,... [\frac{1}{\alpha}] +1$,  we  now take iterated difference quotients of  $\bU^*$ in $x$ and $t$ of the type

\begin{equation}\label{ty1}
\bU^*_{h, i}= \frac{\bU^*(x+he_i, y, t) - \bU^*(x, y,t)}{h^{k\alpha}},\ \bU^*_{h, t}= \frac{\bU^*(x+, y, t+h) - \bU^*(x, y,t)}{h^{\frac{k\alpha}{2}}},
\end{equation}
for   $i=1,.., n$
 similar to  in the proof of Lemma 5.6 in \cite{BG}. Note that $\bU^*_{h, i}$ and $\bU^*_{h, t}$ solves a problem of the type \eqref{res1} thanks to \eqref{vassump}, thus by applying Theorem \ref{hold} to these difference quotients we can conclude that $\nabla_x \bU^*, \bU^*_t$ are in $H^{\alpha}$ up to $\{y=0\}$.   Now by applying the method of difference quotients as in the proof of Theorem \ref{w22}, we can assert that $\nabla \nabla_x \bU^* \in L^{2}(\mathbb B_r^+ \times (-r^2, 0], y^a dXdt)$ for all $r <1 $. Moreover  since we have that $\bU^*_t \in H^{\alpha}$ up to $\{y=0\}$, it now follows by using the equation \eqref{staru} that $ (y^a \bU^*_y)_y  \in L^{2} (\mathbb B_r^+ \times (-r^2, 0], y^{-a} dXdt)$ for all $r<1$.  We record all such observations in the following lemma.
 
  \begin{lemma}\label{regular1}
Let $\bU^*$ be as in Lemma \ref{red1}. Then we have that the  $\nabla_x \bU^*, \partial_t \bU^* \in H^{\alpha}$ up to $\{y=0\}$ for all $\alpha$ as in Theorem \ref{hold}. Moreover, the following estimates hold
\begin{equation}\label{ws002}
||\bU^*||_{L^{\infty}( \mathbb B_{1/2}^+ \times (-1/4, 0])} + ||\nabla_x \bU^*, \partial_t \bU^*||_{H^{\alpha}(\mathbb B_{1/2}^+ \times (-1/4, 0])}\leq C \left(\int_{\mathbb B_1^+ \times (-1, 0]} |\bU^*|^2y^a\right)^{1/2} \end{equation} 
and
\begin{equation}\label{ws22}
 \int_{\mathbb B_{1/2}^+ \times (-1/4, 0]} y^a |\nabla \bU^*|^2 + y^a |\nabla \nabla_x \bU^*|^2  + y^a |\partial_t \bU^*|^2 + y^{-a} |(y^a \bU^*_y)_y |^2 \leq C \int_{\mathbb B_1^+ \times (-1, 0]} |\bU^*|^2y^a.\end{equation}\end{lemma}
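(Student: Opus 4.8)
The plan is to bootstrap the boundary H\"older regularity furnished by Theorem \ref{hold} up to the stated $W^{2,2}$-type regularity by the method of difference quotients, and then read off the normal second derivative directly from the equation. To begin with, by Lemma \ref{red1} the function $\bU^*$ is a weak solution of \eqref{staru}, which is the instance of \eqref{res1} with the constant bounded drift $B$ of \eqref{drift1}, bounded Neumann matrix $W=\tilde V$ (bounded by \eqref{vassump}) and $W_1\equiv 0$; so Theorem \ref{hold} already gives $\bU^*\in H^{\alpha}(\overline{\mathbb B_{1/2}^+}\times(-1/4,0])$ together with the bound $\|\bU^*\|_{L^{\infty}}\le C(\int|\bU^*|^2y^a)^{1/2}$, which is the $L^{\infty}$ half of \eqref{ws002}. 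To upgrade this I would run the scheme indicated around \eqref{ty1}: for $k=1,2,\dots,[\tfrac1\alpha]+1$ and $i=1,\dots,n$ one forms the rescaled difference quotients $\bU^*_{h,i}$, $\bU^*_{h,t}$ of \eqref{ty1}; each satisfies the same bulk equation as $\bU^*$ (the coefficients are constant in $(x,t)$), while on $\{y=0\}$ a discrete Leibniz rule turns the Neumann condition $\py\bU^*=\tilde V\bU^*$ into
\begin{equation*}
\py\bU^*_{h,i}=\tilde V(\cdot+he_i)\,\bU^*_{h,i}+\f{\tilde V(\cdot+he_i)-\tilde V}{h^{k\alpha}}\,\bU^*,
\end{equation*}
so $\bU^*_{h,i}$ solves a problem of type \eqref{res1}. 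Since the entries of $\tilde V$ involve $V$ and $\partial_jV$, the hypothesis \eqref{vassump} makes $\tilde V$ H\"older of every order arising in the iteration, so the coefficients $W=\tilde V(\cdot+he_i)$ and $W_1=\f{\tilde V(\cdot+he_i)-\tilde V}{h^{k\alpha}}\bU^*$ are bounded \emph{uniformly in $h$}; applying Theorem \ref{hold} to $\bU^*_{h,i}$ and $\bU^*_{h,t}$ and letting $h\to 0$ then gains $\alpha$ in parabolic H\"older exponent at each step (passing to quotients of $\nabla_x\bU^*$ once the exponent would exceed $1$), and after the finitely many steps one obtains $\nabla_x\bU^*,\partial_t\bU^*\in H^{\alpha}$ up to $\{y=0\}$; tracking the finitely many constants yields the $H^{\alpha}$ part of \eqref{ws002}.

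Next, with $\nabla_x\bU^*$ now H\"older --- hence locally bounded --- up to $\{y=0\}$, I would apply the method of difference quotients exactly as in the proof of Theorem \ref{w22}, now to $\bU^*$ itself. The plain quotient $v=\tau_{h,i}\bU^*$ solves the bulk equation of \eqref{staru} together with $\py v=\tilde V(\cdot+he_i)v+(\tau_{h,i}\tilde V)\,\bU^*(\cdot+he_i)$, whose data are bounded uniformly in $h$ (again by \eqref{vassump}, $\tilde V$ being Lipschitz and $\bU^*$ bounded), so the Caccioppoli energy estimate from the proof of Theorem 5.1 in \cite{BG} --- the Legendre--Hadamard coercivity of Lemma \ref{coer} being trivial here since the leading matrix of \eqref{staru} is $I_{n+1}$ --- together with the basic energy inequality of the type \eqref{en1} for $\bU^*$ gives $\int y^a|\nabla\tau_{h,i}\bU^*|^2\le C\int|\bU^*|^2y^a$ uniformly in $h$. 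Letting $h\to 0$ produces $\nabla\nabla_x\bU^*\in L^2_{loc}(y^a\,dXdt)$ with the corresponding bound in \eqref{ws22}; the term $\int y^a|\nabla\bU^*|^2$ is the energy inequality \eqref{en1} for $\bU^*$, and $\int y^a|\partial_t\bU^*|^2$ is controlled by the previous paragraph since $\partial_t\bU^*\in H^{\alpha}\subset L^{\infty}_{loc}$.

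Finally, solving the bulk equation of \eqref{staru} for the normal term,
\begin{equation*}
\left(y^a\bU^*_y\right)_y=y^a\partial_t\bU^*-\Div_x\left(y^a\nabla_x\bU^*\right)-y^aB\nabla\bU^*,
\end{equation*}
each term on the right is $y^a$ times a function in $L^2_{loc}(y^a\,dXdt)$ by the previous two paragraphs, and $\int y^{-a}|y^ag|^2=\int y^a|g|^2$, so $\left(y^a\bU^*_y\right)_y\in L^2_{loc}(y^{-a}\,dXdt)$ --- the last term of \eqref{ws22}. The main obstacle in all of this is the first paragraph: one must check that the iterated difference quotients of \eqref{ty1} genuinely solve \eqref{res1}-type problems with coefficients bounded \emph{independently of $h$}, and this is exactly what forces the $C^3$ assumption \eqref{vassump} --- the Neumann matrix $\tilde V$ already contains first derivatives of $V$, so differencing the Neumann condition once costs a second derivative of $V$, and the $H^{\alpha}$-bootstrap for $\nabla_x\bU^*,\partial_t\bU^*$ costs one further derivative; keeping the constants under control through the finitely many iterations is what produces the clean estimate \eqref{ws002}.
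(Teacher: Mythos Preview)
Your proposal is correct and follows essentially the same approach as the paper's own argument: apply Theorem \ref{hold} to $\bU^*$, then to the iterated rescaled difference quotients \eqref{ty1} to bootstrap $\nabla_x\bU^*,\partial_t\bU^*\in H^{\alpha}$, then use the energy/difference-quotient method of Theorem \ref{w22} for the mixed second derivatives, and finally read off $(y^a\bU^*_y)_y$ from the equation. If anything, you have been slightly more explicit than the paper in spelling out the discrete Leibniz rule on the Neumann condition and the role of \eqref{vassump} in keeping the coefficients of the differenced problems bounded uniformly in $h$.
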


We now show the H\"older continuity of $y^a \bU^*_y$ up to the thin set $\{y=0\}$.  

\begin{lemma}\label{holdery}
Let $\bU^*$ be as in Lemma \ref{red1}. Then we have that $y^a \bU^*_y \in H^{\beta}(\overline{\mathbb B_{1/2}^+} \times (-1/4, 0])$ for some $\beta>0$. Moreover the following estimate holds
\begin{equation}\label{ws000}
 ||y^a \bU^*_y||_{H^{\beta}(\mathbb B_{1/2}^+ \times (-1/4, 0])}\leq C \left(\int_{\mathbb B_1^+ \times (-1, 0]} |\bU^*|^2y^a\right)^{1/2}.\end{equation}

\end{lemma}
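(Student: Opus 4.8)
The plan is to bootstrap from the regularity already established in Lemma~\ref{regular1}, using the equation \eqref{staru} to trade derivatives of $\bU^*$ for H\"older continuity of the weighted normal derivative $y^a\bU^*_y$. First I would introduce the auxiliary function $v := y^a\bU^*_y$, which is well defined and lies in a suitable weighted Sobolev space by \eqref{ws22}. From the bulk equation in \eqref{staru}, rewritten in the non-divergence form
\begin{equation}\label{aux1}
y^{-a}(y^a\bU^*_y)_y + \Delta_x \bU^* + B\nabla \bU^* = \bU^*_t,
\end{equation}
we get $v_y = y^a\big(\bU^*_t - \Delta_x\bU^* - B\nabla\bU^*\big)$. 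By Lemma~\ref{regular1}, the right-hand side involves $\bU^*_t$ (which is $H^\alpha$ up to $\{y=0\}$, hence continuous) and $\Delta_x\bU^*, \nabla\bU^*$ (which lie in $L^2(y^a\,dXdt)$ and in fact, by the difference-quotient/Theorem~\ref{hold} argument already carried out, $\nabla_x\bU^*$ is $H^\alpha$ up to $\{y=0\}$). So $v_y$ is controlled, but only in an $L^2$-with-weight sense in $y$ for the second-order tangential term; this is the quantity whose integrability in $y$ near $\{y=0\}$ must be checked carefully.

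Next I would establish continuity and then H\"older continuity of $v$ in the $y$-direction by integrating: for $0<y_1<y_2$ small,
\begin{equation}\label{aux2}
v(x,y_2,t) - v(x,y_1,t) = \int_{y_1}^{y_2} y^a\big(\bU^*_t - \Delta_x\bU^* - B\nabla\bU^*\big)(x,y,t)\,dy,
\end{equation}
and one bounds this using Cauchy--Schwarz together with the estimate $y^{-a}|(y^a\bU^*_y)_y|^2 \in L^2$ from \eqref{ws22} (equivalently $y^a|\nabla\nabla_x\bU^*|^2, y^a|\bU^*_t|^2 \in L^2$). Combined with the fact that $v$ has the prescribed trace $\tilde V\bU^*$ on $\{y=0\}$, which is already $H^\alpha$ there since $\bU^*$ is $H^\alpha$ and $\tilde V$ is $C^1$ (indeed $\tilde V$ involves one derivative of the $C^3$ potential $V$), this yields H\"older control of $v$ transversally near the thin set. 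For the tangential ($x$ and $t$) directions, one applies Theorem~\ref{hold} — or rather its interior analogue away from $\{y=0\}$ plus the boundary decay of the type \eqref{cls4} — to $\bU^*_y$ itself: note that $\partial_y$ commutes with the bulk operator, so $\bU^*_y$ solves (in $\{y>0\}$) a PDE of the same structural type, and its weighted normal derivative $y^a(\bU^*_y)_y = v_y - a y^{a-1}\bU^*_y$ on approach to $\{y=0\}$ is governed by \eqref{aux1}. The interior parabolic estimates in the half-ball $\mathbb B_{1/2}((0,1))$ after the rescaling \eqref{resc1}, exactly as in Step~4 of the proof of Theorem~\ref{hold}, then propagate the tangential H\"older bound up to $\{y=0\}$.

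Putting the transversal and tangential estimates together via the two-case dichotomy (points close to $\{y=0\}$ versus points where $|(X_1,t_1)-(X_2,t_2)|$ dominates $y_1$), exactly as in the conclusion of the proof of Theorem~\ref{hold}, gives $v = y^a\bU^*_y \in H^\beta(\overline{\mathbb B_{1/2}^+}\times(-1/4,0])$ for some $\beta = \beta(a) \in (0,\min(\alpha, \text{something}))$, with the quantitative bound \eqref{ws000} following by tracking constants through \eqref{ws22} and the interior Schauder-type estimates. The exponent $\beta$ may be smaller than $\alpha$ because the weight $y^a$ degenerates/blows up at the thin set; one expects $\beta$ to depend only on $a = 1-2s$.

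\textbf{Main obstacle.} The delicate point is controlling the transversal modulus of continuity of $y^a\bU^*_y$ near $\{y=0\}$: the term $\Delta_x\bU^*$ in \eqref{aux1} is only known to be in $L^2(y^a\,dXdt)$ via \eqref{ws22}, not pointwise bounded, so the integral in \eqref{aux2} must be estimated by H\"older's inequality against a power of $y$, and one must verify that the resulting exponent is positive — i.e. that $\int_0^{y_0} y^{a}\cdot y^{-a}\,dy$-type pairings and the Cauchy--Schwarz split $\int_{y_1}^{y_2} y^a |\cdot| \le (\int y^{a})^{1/2}(\int y^a|\cdot|^2)^{1/2}$ actually produce a gain $|y_2-y_1|^\beta$ with $\beta>0$. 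This is where the precise relation $a = 1-2s \in (-1,1)$ is used, and it is the step where the structure of the weighted space and the $W^{2,2}$-type estimate \eqref{ws22} are genuinely needed rather than just soft compactness.
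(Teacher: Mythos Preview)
Your proposal has the right large-scale architecture (two-case dichotomy, rescaling to a uniformly parabolic problem in the interior), but the two central steps contain genuine gaps.

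\medskip
\textbf{First gap: the transversal estimate.} In \eqref{aux2} you want a \emph{pointwise} bound $|v(x,y_2,t)-v(x,y_1,t)|\le C|y_2-y_1|^{\beta}$, but the information available from \eqref{ws22} is only $\int y^{-a}|(y^a\bU^*_y)_y|^2\,dX\,dt<\infty$, an integral over \emph{all} of $(X,t)$. Cauchy--Schwarz in the single variable $y$ gives
\[
|v(x,y_2,t)-v(x,y_1,t)|\le\Big(\int_{y_1}^{y_2}y^{a}\,dy\Big)^{1/2}\Big(\int_{y_1}^{y_2}y^{-a}|v_y(x,y,t)|^2\,dy\Big)^{1/2},
\]
and the second factor is not controlled for fixed $(x,t)$. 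You are trying to extract pointwise H\"older regularity from an $L^2$ second-derivative bound, which cannot be done by integration alone; some elliptic/parabolic mechanism is needed.

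\medskip
\textbf{Second gap: the tangential estimate.} The claim that ``$\partial_y$ commutes with the bulk operator'' is false: the operator contains $\frac{a}{y}\partial_y$, so differentiating in $y$ produces a singular lower-order term and $\bU^*_y$ does \emph{not} solve an equation of the same type.

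\medskip
\textbf{What the paper does instead.} The key observation you are missing is that $H:=y^{a}\bU^*_y-\phi(x_0,t_0)$ (where $\phi=\tilde V\bU^*$ is the Neumann datum) satisfies the \emph{conjugate} system
\[
y^{-a}\partial_t H-\Div(y^{-a}\nabla H)-y^{-a}B\nabla H=0,
\]
i.e.\ the same structure with the weight $y^{a}$ replaced by $y^{-a}$. This is what makes the interior rescaled estimates in Step~4 of Theorem~\ref{hold} applicable to $H$. To feed those estimates one needs an $L^2(y^{-a})$ decay for $H$ on balls touching $\{y=0\}$; this is obtained not from \eqref{ws22} directly, but from a \emph{refined} Campanato decay for $\bU^*$ minus its first-order expansion $\frac{1}{1-a}\phi(x_0,t_0)y^{1-a}+\langle\nabla_x\bU^*(x_0,t_0),x-x_0\rangle$, of order $r^{n+3+a+2(\alpha+2s)}$ (one full order better than \eqref{cls4}). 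An energy/Caccioppoli inequality then converts this into $\int_{\mathbb B_{y_0/2}(X_0)\times(t_0-y_0^2/4,t_0]}|H|^2 y^{-a}\lesssim y_0^{\,n+1+a+2(\alpha+2s)}$, and the rescaled interior estimate yields $|H(X_0,t_0)|\lesssim y_0^{\alpha}$. The two-case dichotomy then closes exactly as you outlined.

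In short: the missing idea is the conjugate equation for $y^{a}\bU^*_y$ together with the higher-order Campanato decay of $\bU^*$; your direct-integration route cannot bridge the $L^2$-to-pointwise gap.
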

\begin{proof}
We first note that $\bU^*$ solves

\begin{equation}\label{staru1}
\begin{cases}
  y^a\dd_t {\bU}^* - \Div \left(y^a \nabla_{x,y}\bU^* \right) - y^a B \nabla {\bU^*} = 0\ \text{in $  \mathbb B_1^+ \times (-1, 0]$},
  \\   
   \py \bU^*=  \tilde V \bU^* = \phi \in H^{\alpha_0}\  \text{in $B_1 \times (-1, 0]$  for $\alpha_0=1$ (thanks to Lemma \ref{regular1})}.
  \end{cases}
\end{equation}
Now by an analogous compactness argument as in the proof of Lemma 5.6 in \cite{BS}, it follows that there exists $\alpha>0$ such that $1<\alpha +2s<2$ for which the following estimate holds
\begin{equation}\label{decay2}
\int_{\mathbb B_r^+((x_0, 0)) \times (t_0- r^2, t_0])} \left( \bU^* - \frac{1}{1-a} \phi(x_0, t_0) y^{1-a} - <\nabla_x \bU^*(x_0, t_0), x-x_0> \right) ^2 y^a \leq Cr^{n+3+a +2(\alpha + 2s)}.
\end{equation}
Given $(x_0,t_0) \in B_{1/2} \times (-1/4, 0]$, 
Let $H= y^a \bU^*_y - \phi(x_0, t_0)$.  Then from the action of the drift $B$ as in  \eqref{drift1} and Lemma \ref{regular1}  it follows from a direction calculation that $H$ is a weak solution of the  following conjugate system 
\begin{equation}\label{kj02}
  y^{-a}\dd_t H - \Div \left(y^{-a} \nabla_{x,y}H \right) - y^{-a} B \nabla {H} = 0.\end{equation}
  Now for a point $(X_0, t_0)=(x_0, y_0, t_0)$, let $H^*=  \bU^* - \frac{1}{1-a} \phi(x_0, t_0) y^{1-a} - <\nabla_x \bU^*(x_0, t_0), x-x_0> $. Then we note that $H^*$   solves the system of the type
  \begin{equation}\label{kjo4}
    y^a\dd_t H^* - \Div \left(y^a \nabla_{x,y}H^* \right) - y^a B \nabla {H^*} = y^a F
    \end{equation}
    in $\mathbb B_{\frac{y_0}{2}}(X_0) \times (t_0- y_0^2/4, t_0]$. Then from the energy estimate applied to $H^*$   it follows that the following inequality holds
  \begin{align}\label{kjo1}
&  \int_{\mathbb B_{\frac{y_0}{2}}(X_0) \times (t_0- y_0^2/4, t_0]} |H|^2 y^{-a} \leq  \int_{\mathbb B_{\frac{y_0}{2}}(X_0) \times (t_0- y_0^2/4, t_0]} |\nabla H^*|^2 y^{a}\\& \leq \frac{C}{y_0^2}  \int_{\mathbb B_{\frac{3y_0}{4}}(X_0) \times (t_0- 9y_0^2/16, t_0]} \left( |H^*|^2 + y_0^4 |F|^2\right) y^a.\notag
 \end{align}
 Now using the decay estimate \eqref{decay2}, we obtain the following bound from \eqref{kjo1}
 \begin{equation}\label{bdd1}
  \int_{\mathbb B_{\frac{y_0}{2}}(X_0) \times (t_0- y_0^2/4, t_0]} |H|^2 y^{-a}\leq Cy_0^{n+1+a+2(\alpha+2s)}.
  \end{equation}
  Now since $H$ solves \eqref{kj02},  by rescaling as in \eqref{resc1} we note that the rescaled function solves a uniformly parabolic system in  $\mathbb B_{1/2}((0,1)) \times (-1/4, 0]$ and thus by applying the classical estimates to the rescaled function and by scaling back we obtain the following bound
  \begin{equation}\label{bdd2}
  |H(X_0,t_0)|= |y^a \bU^*_y(X_0, t_0) - \phi(x_0, t_0)| \leq Cy_0^{\alpha}.
  \end{equation}
  
  Again let $(X_1, t_1)$ and $(X_2, t_2)$ be two points in $\mathbb B_{1/2}^+ \times (-1/4, 0]$.  Our objective is to show that the following estimate holds
  \begin{equation}\label{des1}
  |y^a\bU^*_y(X_1, t_1) - y^a\bU^*_y(X_2, t_2)| \leq C |(X_1, t_1) - X_2, t_2)|^{\alpha}.
  \end{equation}

  Without loss of generality, we assume that $y_1 \leq y_2$. There are two cases:
  \begin{enumerate}
  \item $|(X_1, t_1) - (X_2, t_2)| \leq \frac{1}{4} y_1$;
  \item $|(X_1, t_1) - (X_2, t_2)| \geq \frac{1}{4} y_1$.  
  \end{enumerate}
  
  If (1) occurs, then the function $H_1= y^a \bU^*_y - \phi(x_1, t_1)$ satisfies an equation of the type \eqref{kj02} in $\mathbb B_{\frac{y_1}{4}}(X_1) \times (t_1 - y_1^2/4, t_1]$. Again by rescaling as in \eqref{resc1}, we note that the rescaled function satisfies a uniformly parabolic system  in   $\mathbb B_{1/2}((0,1)) \times (-1/4, 0]$. Arguing as in \eqref{try}-\eqref{kest} we thus obtain
  \begin{align}\label{bn1}
 & |y^a\bU^*_y(X_1, t_1) - y^a\bU^*_y(X_2, t_2)| = |H_1(X_1, t_1) - H_1(X_2, t_2)| 
 \\
 & \leq \frac{C}{y_1^{\alpha}} \left( \frac{1}{y_1^{n+3-a}} \int_{\mathbb B_{y_1/2}(X_1) \times (t_1 - y_1^2/4, t_1]} H^2 y^{-a} \right)^{1/2} |(X_1, t_1)- (X_2, t_2)|^{\alpha}
 \notag
 \\
 & \leq C|(X_1,t_1)-(X_2, t_2)|^{\alpha}.\notag
 \end{align}
 In the first inequality in \eqref{bn1} we used that in $\mathbb B_{y_1/2}(X_1)$, $y \sim y_1$ and in the last inequality, we used the decay estimate in \eqref{bdd1} with $y_0$ replaced by $y_1$. Suppose now (2) occurs.    In this case, we note that \eqref{bdd2} holds with $(X_0, t_0)$ replaced by $(X_1, t_1)$ and $(X_2, t_2)$. More precisely we have the following inequality
 \begin{equation}\label{bdo1}
 |y^a \bU^*_y (X_i, t_i) - \phi(x_i, t_i)| \leq Cy_i^{\alpha} 
 \end{equation}
 for $i=1,2$. Moreover from \eqref{im1} we have that $|y_2| \leq 6|(X_1,t_1) - (X_2, t_2)|.$. Consequently, we obtain
 \begin{align}\label{bdo2}
& |y^a \bU^*_y(X_1, t_1) - y^a \bU^*_y (X_2, t_2)| \leq \sum_{i=1}^2 |y^a \bU^*_y (X_i, t_i) - \phi(x_i, t_i)|  + |\phi(x_1, t_1) - \phi(x_2, t_2)|\\ &\leq Cy_1^{\alpha} +Cy_2^{\alpha} + C|(x_1, t_1) - (x_2, t_2)|^{\alpha} \leq C|(X_1,t_1)-(X_2, t_2)|^{\alpha}. \notag\end{align}

This finishes the proof of the  lemma.

\end{proof}

Finally by combining the energy estimate in Theorem \ref{w22} along with the regularity results in  Lemma \ref{regular1}, we can assert that the following regularity estimate holds for solutions to the original extension problem  \eqref{ext1} which will be needed subsequently in the proof of Theorem \ref{main}.

\begin{lemma}\label{regular2}
Let $\Tbu$ be as in \eqref{ext1}. Then the following estimates hold for $\alpha$ as in Lemma \ref{regular1}. 
\begin{align}\label{lat}
||\Tbu||_{L^{\infty}( \mathbb B_{1/2}^+ \times (-1/4, 0])} + ||\nabla_x \Tbu, \partial_t \Tbu||_{H^{\alpha}(\mathbb B_{1/2}^+ \times (-1/4, 0])}\leq C \left(\int_{\mathbb B_1^+ \times (-1, 0]} |\Tbu|^2y^a\right)^{1/2} 
\end{align}
and
\begin{equation}\label{lat1}
 \int_{\mathbb B_{1/2}^+ \times (-1/4, 0]} y^a |\nabla \Tbu|^2 + y^a |\nabla \nabla_x \Tbu|^2  + y^a |\partial_t \Tbu|^2 + y^{-a} |(y^a \Tbu_y)_y |^2 \leq C \int_{\mathbb B_1^+ \times (-1, 0]} |\Tbu|^2y^a.\end{equation}
\end{lemma}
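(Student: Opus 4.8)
Note that $\Tbu$ consists of the first $n$ components of $\bU=(\Tbu,\DTbu)$, and by Lemma~\ref{red1} the associated $\bU^*$ satisfies all the regularity recorded in Lemma~\ref{regular1}. The plan is therefore to transfer the estimates for $\bU^*$ back to $\Tbu$, using that $\bU^*$ is merely an anisotropic dilation of $\bU$ in the $x$-variables, and then to reduce the right-hand side from $\int|\bU^*|^2 y^a$ to $\int|\Tbu|^2 y^a$ by means of the energy estimate established in the proof of Theorem~\ref{w22}.

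First I would record the elementary change of variables hidden in \eqref{star}: for $i=1,\dots,n$ one has $\bU^i(x,y,t)=(\bU^*)^i(x/\sqrt{\mu},\,y,\,t)$, and $\bU^{n+1}(x,y,t)=(\bU^*)^{n+1}(x/\sqrt{2\mu+\ld},\,y,\,t)$. Since $\mu$ and $2\mu+\ld$ are fixed constants, bounded below by $\delta_0$ in view of \eqref{lgo}, the map $(x,y,t)\mapsto(cx,y,t)$ (with $c=\sqrt{\mu}$ or $c=\sqrt{2\mu+\ld}$) is bi-Lipschitz, carries parabolic cylinders to comparable parabolic cylinders, scales the weight $y^a\,dX\,dt$ by a fixed constant, and distorts the parabolic distance in \eqref{distance} by at most a fixed factor. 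Hence the estimates \eqref{ws002} and \eqref{ws22} for $\bU^*$, applied on a suitable intermediate cylinder (after a preliminary rescaling, if needed, so that the dilated cylinder sits inside the domain — legitimate since all the quoted estimates are scale invariant), yield the same estimates for every component of $\bU$, hence for $\Tbu$. At this stage one already obtains $\nabla_x\Tbu,\partial_t\Tbu\in H^{\alpha}$ up to $\{y=0\}$, the interior $L^\infty$ bound, and all four $L^2$ quantities appearing in \eqref{lat1} (the $y$-derivatives transfer because the map does not touch $y$), but with $\int|\bU^*|^2y^a\sim\int\bigl(|\Tbu|^2+|\DTbu|^2\bigr)y^a$ on the right.

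The remaining work is to replace $\int\bigl(|\Tbu|^2+|\DTbu|^2\bigr)y^a$ by $\int|\Tbu|^2y^a$. For this I would invoke the Caccioppoli-type energy estimate \eqref{en1} from the proof of Theorem~\ref{w22}: since $|\DTbu|\le|\nabla_x\Tbu|\le|\nabla\Tbu|$, inequality \eqref{en1}, translated and rescaled as needed, bounds $\int_{\mathbb B_\rho^+\times(-\rho^2,0]}|\DTbu|^2y^a$ by $C\int_{\mathbb B_1^+\times(-1,0]}|\Tbu|^2y^a$ for every $\rho<1$. Choosing the intermediate cylinders strictly between $\mathbb B_{1/2}^+\times(-1/4,0]$ and $\mathbb B_1^+\times(-1,0]$ and running a standard finite iteration, the $\DTbu$ term is absorbed, which produces \eqref{lat} and \eqref{lat1}; for the two gradient terms in \eqref{lat1} one may equivalently just quote \eqref{des} of Theorem~\ref{w22}. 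As a more self-contained route to the last term $\int y^{-a}|(y^a\Tbu_y)_y|^2$, one can instead use the bulk equation in \eqref{ext1}: since $y^{-a}(y^a\Tbu_y)_y=\Tbu_{yy}+\tfrac{a}{y}\Tbu_y$, that equation gives the pointwise identity $(y^a\Tbu_y)_y=y^a\bigl(\partial_t\Tbu-\mb{L}\Tbu\bigr)$ for $y>0$, and since $\mb{L}$ is a constant-coefficient second-order operator acting only in $x$ we have $|\mb{L}\Tbu|\le C|\nabla\nabla_x\Tbu|$, whence $\int y^{-a}|(y^a\Tbu_y)_y|^2\le C\int y^a\bigl(|\partial_t\Tbu|^2+|\nabla\nabla_x\Tbu|^2\bigr)$, a quantity already controlled.

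I do not expect any deep obstruction here: once Theorem~\ref{w22} and Lemma~\ref{regular1} are in hand, the lemma is essentially bookkeeping. The one point needing care is precisely that bookkeeping — tracking the nested radii along the chain (Lemma~\ref{regular1} for $\bU^*$, then $\bU$, then $\Tbu$) while applying \eqref{en1} to upgrade $\int(|\Tbu|^2+|\DTbu|^2)y^a$ on an intermediate cylinder to $\int|\Tbu|^2y^a$ on the unit cylinder — together with the minor subtlety that the dilation constant $c$ in \eqref{star} may exceed $1$, so that Lemma~\ref{regular1} might first have to be applied on a cylinder larger than unit size (equivalently, after prescaling).
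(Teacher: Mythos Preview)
Your proposal is correct and follows essentially the same approach as the paper, which simply states that the lemma follows ``by combining the energy estimate in Theorem~\ref{w22} along with the regularity results in Lemma~\ref{regular1}.'' You have spelled out the two steps the paper leaves implicit --- transporting the estimates of Lemma~\ref{regular1} from $\bU^*$ back to $\Tbu$ via the anisotropic dilation \eqref{star}, and then absorbing the extra $\DTbu$ contribution on the right-hand side using the Caccioppoli inequality \eqref{en1} --- and your remarks about tracking the nested radii and the possible need to prescale are the only technical points requiring care.
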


\section{ Proof of Theorem \ref{main}}\label{s:mn}
As previously mentioned  in the introduction, it turns out that some subtle obstructions  which will be described below  doesn't allow us to adapt the Poon type frequency approach    to the reduced system \eqref{staru}. Such an obstruction is very specific to the vectorial case. However with  the smoothness assumptions on the potential $V$ as in \eqref{vassump}, it turns out that in the range $s \geq 1/2$, a certain  change of variables can be carried out which reduces the system \eqref{staru} further to  another system with zero Neumann conditions.

Before proceeding further, we mention that for  notational convenience,  we will denote $\bU^*$ as in \eqref{star}   by $U$ throughout this section.  Similar to that in \cite{ABDG}, by letting $t \to -t$ and by rescaling, we may assume that $U$ solves the following backward parabolic problem

\begin{equation}\label{extlame1}
\begin{cases}
  y^a\dd_t U + \Div \left(y^a \nabla_{x,y}U \right) - y^a B \nabla {U} = 0\ \text{in $\mathbb Q_4^+ \overset{def} = \mathbb B_4^+ \times [0, 16)$},
  \\   
   \py U=  \tilde V U\ \text{in $B_{4} \times [0, 16)$}.
  \end{cases}
\end{equation}

Our first lemma is a monotonicity in time result which allows the passage of information to $t=0$ for the extension problem \eqref{extlame1}.  We now introduce an assumption that will remain in force till the proof of Theorem \ref{main}. We will assume that

\begin{equation}\label{asum}
\int_{\mathbb B_1^+} |U(\cdot, 0)|^2 y^a>0.
\end{equation}
As a consequence of such hypothesis the number
\begin{equation}\label{theta}
\theta \overset{def}= \frac{\int_{\mathbb B_4^+ \times [0, 16)} |U|^2 y^a dXdt}{\int_{\mathbb B_1^+} |U(\cdot, 0)|^2 y^adX}
\end{equation}
is well defined.  We now state and prove the relevant monotonicity in time result.  Before proceeding further, we would like to mention throughout this section, for a vector valued function $f$, we will denote its components by $f^i$.
\begin{lemma} \label{mont}
Let $U$ be a solution of \eqref{extlame1} in $\mathbb Q_4^+.$ Then there exists a constant $N=N(n,a, |B|, ||\tilde V||_{C^1})>2$ such that $N \operatorname{log}(N\theta) \geq 1$, and for which the following inequality holds for $0 \leq t\leq \frac{1}{N \log(N\theta)}$
\[
N\int_{\mathbb{B}_2^+} |U(x,t)|^2 y^a dX\geq \int_{\mathbb{B}_1^+} |U(x,0)|^2 y^a dX.
\]
\end{lemma}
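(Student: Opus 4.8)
This is the parabolic, extension‑problem analogue of the ``passage of information to $\{t=0\}$'' lemmas of Escauriaza--Fern\'andez and Escauriaza--Fern\'andez--Vessella \cite{EF, EFV}, and the plan is to adapt the Gaussian weighted energy method as implemented in \cite{ABDG}. Throughout, all differentiations and integrations by parts are legitimate because, by Lemma \ref{regular1} applied after rescaling (recall $U=\bU^*$ here), $U$, $\partial_t U$, $\nabla\nabla_x U$ and $(y^a U_y)_y$ lie in the natural weighted $L^2$ classes up to $\{y=0\}$ and $U$ is locally bounded, with $\|U\|_{L^\infty(\mathbb{B}_2^+\times[0,12])}^2\leq C M$, where $M:=\int_{\mathbb{B}_4^+\times[0,16)}|U|^2 y^a$ is the numerator of \eqref{theta}.

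\emph{Set-up.} Fix a cut-off $\eta=\eta(x)\in C_0^\infty(\mathbb{B}_2)$ with $\eta\equiv 1$ on $\mathbb{B}_1$, a time scale $t_0>0$ to be calibrated, and put
\[
\mathcal{G}(X,t)=\eta(x)^2\,(t_0-t)^{-\frac{n+1+a}{2}}\,e^{-\frac{|X|^2}{4(t_0-t)}},\qquad H(t)=\int_{\mathbb{B}_2^+}\mathcal{G}(X,t)\,y^a\,|U(X,t)|^2\,dX,
\]
so that on $\{\eta\equiv 1\}$ the un-cut Gaussian obeys $y^a\partial_t\mathcal{G}+\Div(y^a\nabla\mathcal{G})=0$, the adjoint heat identity for the operator in \eqref{extlame1}; this produces the decisive cancellation below. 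By \eqref{asum} and the boundedness of $\mathcal{G}$, $\log H$ is well defined for $t$ near $0$ and $H(t)$ is comparable, up to constants depending on $t_0$, to the localized quantities $\int_{\mathbb{B}_1^+}|U(\cdot,t)|^2 y^a$ and $\int_{\mathbb{B}_2^+}|U(\cdot,t)|^2 y^a$.

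\emph{The differential identity.} Differentiating $H$, using \eqref{extlame1} to write $y^a\partial_t U=-\Div(y^a\nabla U)+y^a B\nabla U$, integrating by parts over $\mathbb{B}_2^+$ and invoking $\lim_{y\to 0}y^a\partial_y U=\tilde V U$, the terms containing exactly one derivative of $\mathcal{G}$ cancel on $\{\eta\equiv1\}$, leaving
\[
H'(t)=2\int \mathcal{G}\,y^a|\nabla U|^2+\mathcal{R}(t),
\]
where $\mathcal{R}(t)$ consists of: the commutator terms supported in $\{\nabla\eta\neq 0\}\subset(\mathbb{B}_2\setminus\mathbb{B}_1)^+$, on which the Gaussian and its derivatives are pointwise $O\!\big(e^{-c/t_0}\big)$; the drift term $2\int\mathcal{G}\,y^a U\cdot B\nabla U$, which is $\leq\tfrac18\int\mathcal{G}\,y^a|\nabla U|^2+C|B|^2 H(t)$ by Cauchy--Schwarz; and the boundary term $2\int_{\{y=0\}}\mathcal{G}\,U\cdot\tilde V U\,dx$, which by $\|\tilde V\|_{L^\infty}$ and the weighted trace inequality $\int_{\{y=0\}}|v|^2\leq\varepsilon\int y^a|\nabla v|^2+C_\varepsilon\int y^a|v|^2$ (valid for $a\in(-1,1)$) is $\leq\tfrac18\int\mathcal{G}\,y^a|\nabla U|^2+C H(t)$. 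Absorbing the good term yields a differential inequality $\frac{d}{dt}\log H(t)\geq -C-(\text{localization error})$, $C=C(n,a,|B|,\|\tilde V\|_{C^1})$, together with the standard almost‑monotonicity in $t$ of the parabolic frequency $\mathcal{N}(t)=(t_0-t)\big(\int\mathcal{G}\,y^a|\nabla U|^2\big)/H(t)$.

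\emph{Calibration and conclusion.} A crude energy bound controls $\mathcal{N}$ at a reference time in $(0,16)$, and hence — by almost‑monotonicity — on a short initial interval, by $C(1+\log\theta)$ (numerator $\lesssim M$, denominator $\gtrsim$ a fixed multiple of the denominator of \eqref{theta}). Calibrating $t_0$ as in the Escauriaza--Fern\'andez--Vessella scheme and integrating the above inequality from $0$ to $\tau$ gives $H(\tau)\geq H(0)\exp\!\big(-C\tau\log(N\theta)\big)$; choosing $N=N(n,a,|B|,\|\tilde V\|_{C^1})$ large enough that $N\log(N\theta)\geq 1$ and that the exponent exceeds $-\log 2$ on $[0,\tfrac{1}{N\log(N\theta)}]$ forces $H(\tau)\geq\tfrac12 H(0)$ there, and the two-sided comparison from the set-up converts this into $\int_{\mathbb{B}_2^+}|U(\cdot,\tau)|^2 y^a\geq c\int_{\mathbb{B}_1^+}|U(\cdot,0)|^2 y^a$, i.e.\ the claim after enlarging $N$ to absorb $c$. \emph{Main obstacle.} The delicate step is this last calibration: one must trade off the Gaussian time scale $t_0$ against the cut-off localization error and the frequency normalization so that precisely the interval of length $\tfrac{1}{N\log(N\theta)}$ is recovered — this is the technical core, carried out as in \cite{EF, EFV, ABDG}. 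The only genuinely new point in the present vectorial, non-homogeneous Neumann situation is to check that the boundary contribution $\int_{\{y=0\}}\mathcal{G}\,U\cdot\tilde V U$ generated by $\py U=\tilde V U$ is of lower order (it is, by the weighted trace inequality) and that the drift $B\nabla U$ merely perturbs constants; neither the asymmetry of $\tilde V$ nor its commuting structure is needed for this lemma.
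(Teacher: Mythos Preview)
Your overall strategy is the correct one and matches the paper's: form a Gaussian-weighted $L^2$ energy, differentiate, handle the drift by Cauchy--Schwarz and the Neumann boundary contribution by a weighted trace inequality, and control the commutator with the cutoff via the exponential smallness of the Gaussian away from its center. Your remarks that neither the asymmetry of $\tilde V$ nor its commuting structure plays a role here are also right.

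The gap is in your choice of weight. You take a single Gaussian $\mathcal G(X,t)=\eta^2(t_0-t)^{-\frac{n+1+a}{2}}e^{-|X|^2/4(t_0-t)}$ centered at the origin and singular at $t_0>0$, and then assert that $H(t)$ is two-sidedly comparable to $\int_{\mathbb B_1^+}|U(\cdot,t)|^2y^a$ ``up to constants depending on $t_0$''. Those constants are the whole point. On $\mathbb B_1^+$ one has $e^{-|X|^2/4t_0}\ge e^{-1/4t_0}$, so the lower comparison at $t=0$ carries a factor $e^{-1/4t_0}$, which forces $t_0$ of order one if $N$ is to be universal; but then on the cutoff annulus the Gaussian is \emph{not} $O(e^{-c/t_0})$ as you claim, and the localization error is of the same size as the signal. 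If instead $t_0$ is small enough to make the localization error small, the lower comparison degenerates like $e^{-c/t_0}$ and the resulting ``$N$'' depends on $\theta$. In short, with your weight the calibration you flag as the main obstacle is not merely delicate but cannot be closed: the two competing exponentials come with the same rate.

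The paper (following \cite{ABDG}) avoids this by using the \emph{Neumann fundamental solution} $\mathscr G(X_1,X,t)=p(x_1,x,t)p^{(a)}(y_1,y,t)$, centered at a variable base point $X_1\in\mathbb B_1^+$ and singular at $t=0$, with a cutoff $\phi\equiv1$ on $\mathbb B_{3/2}$. Two features resolve the tension. First, the reproducing property gives $\lim_{t\to 0^+}H(t)=|U(X_1,0)|^2$ exactly, so no lower comparison constant is needed. Second, for $X_1\in\mathbb B_1^+$ and $X\in\mathbb B_2^+\setminus\mathbb B_{3/2}^+$ one has $|X-X_1|\ge\tfrac12$, hence $\mathscr G\le e^{-1/Mt}$, which yields the differential inequality $H'(t)\ge -C\,t^{-(1+a)/2}H(t)-Ce^{-1/Ct}\|U\|_{L^2(\mathbb Q_4^+,y^a)}^2$; the $t^{-(1+a)/2}$ factor in front of $H$ comes from the \emph{scaled} trace inequality (cf.\ \cite[(3.14)]{ABDG}), not the unscaled version you wrote. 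One then integrates in $t$ and averages over $X_1\in\mathbb B_1^+$ with weight $y_1^a$; the $\theta$-dependence enters only through absorbing the exponentially small localization error into the right-hand side, which is exactly what produces the time interval of length $1/(N\log(N\theta))$.

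Finally, your invocation of the parabolic frequency $\mathcal N(t)$ is out of place here: the paper does not use it for Lemma~\ref{mont}, and in the scheme of \cite{EF,EFV,ABDG} the frequency almost-monotonicity is the tool for the \emph{doubling} inequality (Theorem~\ref{db1}), which in fact takes Lemma~\ref{mont} as input. Bringing $\mathcal N$ into the proof of Lemma~\ref{mont} is therefore circular as written.
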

\begin{proof}
The proof is similar to that in \cite{ABDG}.
Let  $f= \phi U,$ where $\phi \in C_0^{\infty}(\mathbb B_2)$ is a spherically symmetric cutoff such that $0\le \phi\le 1$ and $\phi \equiv1$ on $\mathbb B_{3/2}.$ Since $U$ solves \eqref{extlame1} and $\phi$ is independent of $t$, it is easily seen that the function $f$ solves the problem
\begin{align} 
\begin{cases}\label{extended parabolic Lame2}
  y^a \dd_tf +\Div(y^a \nabla_{x,y}f) -y^a{B}\nabla f= 2y^a\langle \nabla \phi, \nabla U \rangle +\text{div} (y^a \nabla \phi) U -y^a {B} \nabla \phi U, \
    \tn{ for } (x,t)\in \mathbb Q_4^+,\\
    \py f= \tilde V(x,t) \, f(x,t)  \text{ in } B_4 \times [0,16).
\end{cases}    
\end{align}
As in \cite{ABDG}, $\phi$ has the following properties:
\begin{equation}\label{obs1}
\begin{cases}
\operatorname{supp} (\nabla \phi) \cap \{y>0\}  \subset \mathbb B_2^+ \setminus \mathbb B_{3/2}^+
\\
|\Div(y^a \nabla \phi)| \leq C y^a\ \mathbf 1_{\mathbb B_2^+ \setminus \mathbb B_{3/2}^+},
\end{cases}
\end{equation}
where for a set $E$ we have denoted by $\mathbf 1_E$ its indicator function.

Let us fix a point $X_1=(x_1,y_1)\in \R^{n+1}_+$ and introduce the quantity
\begin{align*}
H(t) = \int_{\R^{n+1}_+} |f(X,t)|^2 \G(X_1,X,t) y^a dX 
\end{align*}
where 
\begin{equation}\label{funda}
\G(X_1,X,t) = p(x_1,x,t)\ p^{(a)}(y_1,y,t)
\end{equation}
is the product of the standard Gauss-Weierstrass kernel $p(x_1,x,t) = (4\pi t)^{-\frac n2} e^{-\frac{|x_1-x|^2}{4t}}$ in $\Rn\times\R^+$ with the heat kernel of the Bessel operator $\mathscr B_a = \p_{yy} + \frac ay \p_y$  with Neumann boundary condition in $y=0$ on $(\R^+,y^a dy)$ (reflected Brownian motion)
 \begin{align}\label{fs}
p^{(a)}(y_1,y,t) & =(2t)^{-\frac{a+1}{2}}\left(\frac{y_1 y}{2t}\right)^{\frac{1-a}{2}}I_{\frac{a-1}{2}}\left(\frac{y_1 y}{2t}\right)e^{-\frac{y_1^2+y^2}{4t}}.
\end{align}

\par

From the asymptotic behaviour of $I_{\frac{a-1}{2}}(z)$ near $z=0$ and at infinity one immediately obtains the following estimate for some $C(a), c(a) >0$ (see e.g. \cite[formulas (5.7.1) and (5.11.8)]{Le}) ,
\begin{equation}\label{bessel}
I_{\frac{a-1}{2}}(z) \leq C(a) z^{\frac{a-1}{2}}  \hspace{6mm} \text{if} \hspace{2mm} 0 < z \le c(a),\ \ \ \ \ I_{\frac{a-1}{2}}(z) \leq C(a) z^{-1/2} e^z \hspace{2mm}\ \  \text{if} \hspace{2mm} z \ge c(a).
\end{equation}
Note that as $z\to 0^+$
\begin{equation}\label{zero}
z^{\frac{1-a}{2}} I_{\frac{a-1}{2}}(z) \cong \frac{2^{\frac{1-a}{2}}}{\Gamma((1+a)/2)}.
\end{equation}

\par

Observe that \eqref{funda} and \eqref{fs} imply that for every $x, x_1\in \Rn$ and $t>0$ one has
\begin{equation}\label{neu}
\operatorname{lim}_{y \to 0^+}  y^a \partial_y \G((x,y),(x_1,0),t) =0. 
\end{equation}

We observe explicitly that by the semigroup property of $\mathscr P_t^{(a)}$ and the fact that $\phi \equiv 1$ in $\mathbb B_1$, we have for every $X_1\in \mathbb B_1^+$
\begin{equation}\label{H0}
\underset{t\to 0^+}{\lim}\ H(t) = |f(X_1,0)|^2 = |U(X_1,0)|^2. 
\end{equation}
We want to establish the following.

\noindent \emph{\underline{Claim}}: There exist constants $C=C(n,a, |B|_{\infty}, ||\tilde V||_1)$ and $0<t_0 = t_0(n,a,||\tilde V||_{\infty})<1$ such that for $X_1\in \mathbb B_1^+$ and $0<t<t_0$ one has 
\begin{equation}\label{mkg1}
H'(t) \geq -C||\tilde V||_{\infty} t^{-\frac{1+a}2} H(t) -C e^{-\frac{1}{Ct}}||U||_{L^2(\mathbb Q_4^+, y^a dXdt)}^2.
\end{equation}

\medskip

Using the divergence theorem, \eqref{extended parabolic Lame2} and the Neumann condition \eqref{neu} above we obtain for any fixed $X_1\in \mathbb B_1$ and $0<t\le 1$
\begin{align}\label{h1}
H'(t) &  = \sum_i\int 2 f^i f^i_t \G y^a dX +\int {f^i}^2 \G_t y^a dX
=\sum_i \int 2 f^i f^i_t \G y^a dX +\int {f^i}^2 \operatorname{div} (y^a  \nabla\G)dX
\\
&= \sum_i\int 2 f^i f^i_t \G y^a dX -\int \langle\nabla {f^i}^2,\nabla \G\rangle y^a dX
= \sum_i\int 2 f^i f^i_t \G y^a dX +\int \operatorname{div}(y^a \nabla {f^i}^2) \G dX
\notag\\
& +\int_{\{y=0\}} 2\tilde Vf \cdot f \, \G dx
\notag \\
&=\sum_i \int 2 f^i (y^a f^i_t + \operatorname{div}(y^a \nabla f^i)) \G  dX
 + \int 2 |\nabla f^i|^2   \G y^a dX +\int_{\{y=0\}} 2\tilde Vf \cdot f \, \G dx\notag\\
&= I_1 + I_2 + I_3.\notag
\end{align}

\par 

We first prove that
\begin{itemize}
\item[(i)] for every $X_1\in \mathbb B_1^+$, $0<t\le 1$ and $\epsilon>0$, there exists $C=C(n,a, |B|_{\infty})$ such that the following inequality holds
\begin{equation}\label{i1}
I_1 \geq - C e^{-\frac{1}{M t}} \int_{\mathbb Q_4^+} |U|^2 y^a dXdt+\left[\sum_i[C \, \epsilon \int  |\nabla f^i|^2    y^a \G \, dX+\frac{C}{\epsilon} \int  {f^i}^2y^a  \G \,  dX]\right]. 
\end{equation}

\par

\item[(ii)] there exists $t_0<1$ such that for every $X_1\in \mathbb B_1^+$ and $0<t\le t_0$ one has
\begin{align}\label{traceapp}
|I_3| \leq C(n,a)||\tilde V||_{\infty}\sum_i\left(t^{-\frac{1+a}{2}}\int {f^i}^2 \G y^a dX + t^{\frac{1-a}{2}} \int |\nabla f^i|^2 \G y^a dX\right).
\end{align}
\end{itemize}

\par

With \eqref{i1} and \eqref{traceapp} in hands, we return to \eqref{h1} to find
\begin{align*}
H'(t)  \ge & - C e^{-\frac{1}{M t}} \int_{\mathbb Q_4} U^2 y^a dXdt + \sum_i2\int  |\nabla f^i|^2   \G y^a dX
\\
& - C(n,a)||\tilde V||_{\infty}\left(t^{-\frac{1+a}{2}} H(t) + t^{\frac{1-a}{2}} \sum_i\int |\nabla f^i|^2 \G y^a dX\right)\\
&- \left[\sum_i[C \, \epsilon \int  |\nabla f^i|^2    y^a \G \, dX+\frac{C}{\epsilon} \int  {f^i}^2y^a  \G \,  dX]\right].\end{align*}

\par

If at this point in this inequality we choose $t_0<1$  and $\ve>0$  such that $C \ve + C(n,a)||\tilde V||_{\infty} t_0^{\frac{1-a}{2}} <1$, it is clear that  for $X_1\in \mathbb B_1^+$ and $0<t<t_0$ we obtain the \emph{Claim} \eqref{mkg1}.

\par

We further note that \eqref{traceapp} in fact follows from the inequality \cite[(3.14)]{ABDG}.  Therefore, it suffices to prove the estimate \eqref{i1}. Next when $X_1 \in \mathbb B_1^+$, $X \in \mathbb B_2^+ \setminus  \mathbb B_{3/2}^+$ and $0<t\le 1$, the following bound holds for some universal $M>0$ (See \cite[(3.16)]{ABDG})
\begin{equation}\label{g4}
\G(X_1, X, t) \leq e^{-\frac{1}{M t}}.
\end{equation}

\par

With this estimate in hand, we now insert such inequality in the definition of $I_1$ and using \eqref{extended parabolic Lame2} and \eqref{obs1} we finally obtain for $\epsilon>0$
\begin{align*}
&|I_1| \le C e^{-\frac{1}{Mt}} \sum_i \int_{\mathbb B_2^+} \left(|\nabla U^i| + |U^i|\right) |U^i| y^a+ C\sum_i\int | f^i (y^a[{B}\nabla f]^i)| \G  dX\\
&\leq C e^{-\frac{1}{Mt}} \, \sum_i \int_{\mathbb B_2^+} \left(|\nabla U^i| + |U^i|\right) |U^i| y^a+ \sum_i[C \, \epsilon \int  |\nabla f^i|^2    y^a \, \G \, dX+\frac{C}{\epsilon} \int  {f^i}^2y^a   \G \, dX],
\end{align*}
by Cauchy Schwarz inequality. Here $C$ depends on $|B|_{\infty}.$
At this point we invoke the $L^{\infty}$ bounds for $U, \nabla_x U^i, U_t$ and $y^a U_y$ in Lemma \ref{regular1} and Lemma \ref{holdery} to finally conclude that for every $X_1\in \mathbb B_1^+$ and $0<t\le 1$ the inequality \eqref{i1} holds and this completes the proof of \eqref{mkg1}.

Once the \emph{\underline{Claim}} \eqref{mkg1} is proved  which is the analogue of \cite[(3.7)]{ABDG}, we can now repeat the arguments  as in \cite{ABDG} using the regularity estimate in Lemma \ref{regular1} to complete the proof of the lemma. 

\end{proof}

\subsection{A further reduction of \eqref{extlame1}}\label{furred}

Now it turns out that due to the asymmetric nature of the matrix potential $\tilde V$ which has a similar structure as in \eqref{matr}, it is not obvious to adapt  the Poon type approach as in  the scalar case  in \cite{ABDG} to obtain the analogues of the   first variation results   in \cite[Lemma 3.3]{ABDG}.  More precisely, due to the structure of the matrix potential in \eqref{matr}, one cannot ensure certain key cancellations in the proof of  the variation of the energy in Lemma 3.3 in \cite{ABDG}  which is one of the  important ingredients in the proof of the doubling inequality.   However in the case when $s \geq 1/2$, with a higher  $C^{2}$ regularity assumption on $\tilde V$ which in turn is guaranteed by \eqref{vassump}, we show that by a certain transformation, we can get rid of the Neumann datum.  This transformation also exploits very crucially the special structure of $\tilde V$ as in \eqref{matr}. Over here it is to be said that althought  $\tilde V$ is not symmetric, nevertheless it has a specific structure that ensures that $\tilde V$ commutes with $\partial_t \tilde V, \nabla_x \tilde V$  which in turn allows for some of the ensuing computations below.  As previously mentioned in the introduction, this feature is very specific to the vectorial case.

Corresponding to $U$ as in \eqref{extlame1}, we let

\begin{equation}\label{W}
W= e^{-\frac{\tilde V(x,t) y^{1-a}}{1-a}} U.
\end{equation}
Therefore we have for $i\in\{1,2,\cdots,n\}$
\begin{align*}
	\dd_t W = e^{-\f{y^{1-a}}{1-a} \tilde{V}(x,t)} \left(\dd_t U - \f{y^{1-a}}{1-a} V_t U\right), \qd
	\dd_{x_i} W = e^{-\f{y^{1-a}}{1-a} \tilde{V}(x,t)} \left(\dd_{x_i} U - \f{y^{1-a}}{1-a} V_{x_i} U\right)
\end{align*}
and
\begin{equation}
y^a \dd_y W = e^{-\frac{\tilde V(x,t) y^{1-a}}{1-a}} (- \tilde V) U + e^{-\frac{\tilde V(x,t) y^{1-a}}{1-a}}  y^a U_y.
\end{equation}
 By letting $y \to 0^+$,  it  thus follows from the Neumann condition in \eqref{extlame1} that
\begin{equation}\label{zeron}
\py W=0.
\end{equation}
After a further computation, we find that
\begin{align*}
	& \dd^2_{x_ix_i} W = e^{-\f{y^{1-a}}{1-a}\tilde{V}(x,t)} \left( \dd^2_{x_ix_i}U  - \f{2y^{1-a}\tilde{V}_{x_i}\dd_{x_i}U}{1-a} \right) +  \left(  \tilde{V}^2_{x_i} \f{y^{2(1-a)}}{(1-a)^2} W  - \tilde{V}_{x_ix_i} \f{y^{1-a}W}{1-a} \right), \\
	& \partial_y (y^a W_y) = - \tilde V W_y + e^{-\f{y^{1-a}}{1-a} \tilde V(x,t)} \partial_y (y^a U_y) - \tilde Ve^{-\f{y^{1-a}}{1-a} \tilde V(x,t)}U_y.
	\end{align*}
An important constituent in the above derivations lies in the observation that $\tilde{V}$ and $\tilde{V}_t$ (or $\tilde{V}_{x_i}$, $\tilde{V}_{x_i x_i}$) commute and thus we can compute the derivatives of exponential matrices.  This crucial aspect is easily verified from the structure of $\tilde V$ as in \eqref{matr}.

 In view of the above relations, we obtain  using the equation \eqref{extlame1} satisfied by $U$ that the following holds

 \begin{align}\label{calc1}
& y^a W_t + \Div(y^a \nabla W)= e^{-\f{y^{1-a}}{1-a} \tilde V(x,t)} y^aB \nabla U  - \left( \frac{\tilde V_t y}{1-a}  + \frac{y}{1-a} \Delta V\right)W\\
& + \f{y^{2-a}}{(1-a)^2} \sum_{i} \tilde{V}^2_{x_i} W    -\f{2y}{1-a} \sum_{i=1}^{n}  \tilde{V}_{x_i} e^{-\f{y^{1-a}}{1-a}\tilde{V}(x,t)} \dd_{x_i} U\notag\\
& - \tilde V W_y - \tilde Ve^{-\f{y^{1-a}}{1-a} \tilde V(x,t)}U_y.\notag 
\end{align}
Now again from \eqref{W} we have
\begin{align*}
	\dd_{x_i} U = e^{\f{y^{1-a}}{1-a}\tilde{V}(x,t)} \left(\dd_{x_i}W + \f{y^{1-a} \tilde{V}_{x_i}}{1-a} W\right),\ \dd_y U = e^{\f{y^{1-a}}{1-a}\tilde{V}(x,t)} \left(\dd_y W + y^{-a}\tilde{V}W\right).
\end{align*}
Using this in \eqref{calc1} above, we get
 
 \begin{align}\label{calc2}
& y^a W_t + \Div(y^a \nabla W)= y^ae^{-\f{y^{1-a}}{1-a} \tilde V(x,t)} Be^{\f{y^{1-a}}{1-a}\tilde{V}(x,t)} \nabla W  - \left( \frac{\tilde V_t y}{1-a}  + \frac{y}{1-a} \Delta \tilde V\right)W\\
& - \f{y^{2-a}}{(1-a)^2} \sum_{i} \tilde{V}^2_{x_i} W   -\f{2y}{1-a} \sum_{i=1}^{n}  \tilde{V}_{x_i}  \dd_{x_i} W\notag\\
& - 2\tilde V W_y - y^{-a}(\tilde V)^2W + y e^{-\f{y^{1-a}}{1-a} \tilde V(x,t)} Be^{\f{y^{1-a}}{1-a}\tilde{V}(x,t)}\frac{\nabla_x \tilde V}{1-a} W
.\notag 
\end{align}
 Over here  the term  $Be^{\f{y^{1-a}}{1-a}\tilde{V}(x,t)} \nabla W$ is to be interpreted as 
\begin{align*}
	Be^{\f{y^{1-a}}{1-a}\tilde{V}(x,t)} \nabla W := B(e^{\f{y^{1-a}}{1-a}\tilde{V}(x,t)} \nabla W).
\end{align*}
Note that $B$ acts as a linear map  from $\R^{(n+1) \times (n+1)}  \to \R^{n+1}$.
Similarly the following   term  in \eqref{calc2} above, i.e. $$Be^{\f{y^{1-a}}{1-a}\tilde{V}(x,t)}\frac{\nabla_x \tilde V}{1-a} W,$$ is  to be understood as 
\begin{align*}
	Be^{\f{y^{1-a}}{1-a}\tilde{V}(x,t)}\frac{\nabla_x \tilde V}{1-a} W := \f{1}{1-a} B \begin{pmatrix} 
		 e^{\f{y^{1-a}}{1-a}\tilde{V}(x,t)} \tilde V_{x_1}W & \cdots & e^{\f{y^{1-a}}{1-a}\tilde{V}(x,t)} \tilde V_{x_n}W, & 0 
	\end{pmatrix}.
\end{align*}

It thus  follows from \eqref{calc2} that when $a \leq 0$  (or equivalently when $s \geq 1/2$), then one has that  $W$  solves the following differential inequality $\B_4^+ \times [0, 16)$
\begin{equation}\label{weq1}
\left| y^a W_t + \Div(y^a \nabla W)  \right| \leq  C y^a (|W| + |\nabla W|),
\end{equation}
 with zero Neumann conditions.

Furthermore, we note that since $W$ and $U$ are related by the transformation as in \eqref{W} which is invertible in a bounded manner as $\tilde V$ is bounded, therefore we have that the assumption \eqref{asum} as well as the monotonicity result in Lemma \ref{mont} holds for $W.$
Moreover since the zero Neumann condition \eqref{zeron} holds, at this point we can apply the approach as  in the classical case  in \cite[Section 2]{EFV}  based on the Poon's frequency function ( more precisely we  use the  adaptation of such an  approach as  in \cite{ABDG} where $a \neq 0$ is considered) combined with the regularity estimates in Lemma \ref{regular1} and Lemma \ref{holdery}  to finally conclude that a conditional doubling property similar to that in \cite[Theorem 3.5]{ABDG} holds for $W$ and consequently for $U$ which can be stated as follows.

\begin{thrm}\label{db1}
Let $U$ be a solution of \eqref{extlame1} in $\mathbb Q_4^+$ and assume that $a\leq 0$.  There exists $N>2$, depending on $n$, $a$, sup norm of $B$ and the $C^2  $-norm of $\tilde V$, for which $N\log(N\theta) \ge 1$ and such that:
\begin{itemize}
\item[(i)] For $r \leq 1/2,$ we have 
$$\int_{\mathbb B_{2r}^+}|U(X,0)|^2 y^adX \leq (N \theta)^N\int_{\mathbb B_{r}^+}|U(X,0)|^2 y^adX.$$
Moreover for  $r \leq 1/\sqrt{N \operatorname{log}(N \theta)}$  the following two inequalities hold:
\item[(ii)] $$\int_{\mathbb Q_{2r}^+} |U(X,t)|^2y^a dXdt \leq \operatorname{exp}(N \operatorname{log}(N \theta) \operatorname{log}(N \operatorname{log}(N \theta)))r^2 \int_{\mathbb B_r^+}|U(X,0)|^2y^adX.$$
	 	\item[(iii)]$$\int_{\mathbb Q_{2r}^+} |U(X,t)|^2 y^adXdt \leq \operatorname{exp}(N \operatorname{log}(N \theta) \operatorname{log}(N \operatorname{log}(N \theta)))\int_{\mathbb Q_r^+}|U(X,t)|^2y^adXdt.$$ 
\end{itemize}
\end{thrm}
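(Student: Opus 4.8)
The plan is to pass from $U$ to the auxiliary function $W$ introduced in \eqref{W} and then to transplant the Poon-type frequency function argument of \cite{ABDG} (itself an adaptation of \cite[Section 2]{EFV}) to $W$. The first step is to record that, since $\tilde V$ has $C^2$ norm controlled by \eqref{vassump} and commutes with $\tilde V_t$ and $\nabla_x\tilde V$ (so that exponentials of $\frac{y^{1-a}}{1-a}\tilde V$ can be differentiated as in \eqref{W}--\eqref{calc2}), the identity \eqref{calc2} together with the sign restriction $a\le 0$ shows that $W$ satisfies the differential inequality \eqref{weq1}, i.e. $|y^a W_t + \Div(y^a\nabla W)|\le C y^a(|W|+|\nabla W|)$ on $\mathbb B_4^+\times[0,16)$, \emph{and} the homogeneous Neumann condition \eqref{zeron}, $\py W = 0$. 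Because $\tilde V$ is bounded and $y^{1-a}$ stays bounded on the relevant slab, the map $U\mapsto W$ and its inverse are bounded on $L^2(y^a\,dXdt)$; I would use this to check that the running hypothesis \eqref{asum}, the number $\theta$ of \eqref{theta}, and the conclusion of Lemma \ref{mont} all persist for $W$ up to absolute constants, so that it suffices to prove (i)--(iii) with $U$ replaced by $W$. I would also transport the a priori estimates of Lemma \ref{regular1} and Lemma \ref{holdery} through this bounded change of variables, obtaining $L^\infty$ bounds for $W$, $\nabla_x W$, $\partial_t W$ and $y^a W_y$ near $\{y=0\}$; these are exactly what is needed to absorb all the error terms below.

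Next I would set up, following \cite[Section 3]{ABDG}, the frequency function for $W$ based at $(0,0)$: with $\G$ the Gauss--Weierstrass--Bessel kernel of \eqref{funda}, one puts $H(t)=\int |W(X,t)|^2\,\G\,y^a\,dX$, a Dirichlet-type quantity $D(t)\simeq t\int |\nabla W(X,t)|^2\,\G\,y^a\,dX$ (localized with a cutoff $\phi$ as in Lemma \ref{mont}), and $\mathcal N(t)=D(t)/H(t)$. The heart of the matter is to differentiate $H$ and $D$ and integrate by parts — here $\py W=0$ is crucial, since it makes the $\{y=0\}$ boundary terms vanish — in order to deduce an almost-monotonicity inequality of the form $\frac{d}{dt}\log\mathcal N(t)\ge -C(1+\mathcal N(t)) + (\text{small})$, where the small term is $O(e^{-1/(Ct)}\|W\|_{L^2(\mathbb Q_4^+)}^2)$, exactly as in \cite[Lemma 3.3]{ABDG}. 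The only genuinely new contributions relative to the scalar computation there come from the drift $B\nabla W$ and from the zeroth- and first-order terms on the right-hand side of \eqref{weq1}; each of these I would handle by Cauchy--Schwarz and then absorb into $D(t)$ and $H(t)$ via the $L^\infty$ bounds just recorded, in precisely the manner in which the term $I_1$ was treated in the proof of Lemma \ref{mont} (see \eqref{i1}). The vector structure is harmless: the principal part $y^a\partial_t + \Div(y^a\nabla\,\cdot)$ acts diagonally on the components, $|W|^2$ and $|\nabla W|^2$ are sums over components, and the identities used in \cite{ABDG} extend componentwise — which is the whole point of having removed the asymmetric Neumann datum.

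Once the almost-monotonicity of $\mathcal N$ is in hand, assertions (i)--(iii) follow by the standard bookkeeping of \cite{ABDG}. Integrating the differential inequality for $\log H$ over the stated range of $r$ yields the space-like doubling estimate (i) for $\int_{\mathbb B_r^+}|W(X,0)|^2 y^a$; the space-time estimates (ii) and (iii) are then obtained by combining (i) with the frequency bound and with Lemma \ref{mont}, which transfers $L^2$ mass between positive times and the slice $\{t=0\}$, exactly as in the derivation of the corresponding inequalities in \cite[Theorem 3.5]{ABDG}. Translating back to $U$ via the two-sided bound for the change of variables \eqref{W} only alters constants, so the dependence of $N$ on $n$, $a$, $\|B\|_\infty$ and $\|\tilde V\|_{C^2}$ is as asserted. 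The hard part, I expect, is not the frequency computation itself — by now routine once the correct differential inequality and boundary condition are available — but rather the reduction to $W$: one must verify that the commutation structure of $\tilde V$ with $\tilde V_t$ and $\nabla_x\tilde V$ genuinely produces the clean inequality \eqref{weq1} with no surviving term of the wrong scaling when $a\le 0$ (in particular the terms $2\tilde V W_y$ and $y^{-a}\tilde V^2 W$ in \eqref{calc2} are only absorbed because $y^a$ is bounded below on the slab precisely when $a\le 0$), and that every resulting error term is subcritical with respect to the parabolic scaling of the frequency function — which is exactly where the regularity package of Lemma \ref{regular1} and Lemma \ref{holdery} is indispensable.
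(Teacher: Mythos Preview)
Your proposal is correct and follows essentially the same route as the paper. The paper's own argument for Theorem \ref{db1} is a terse reference to the transformation $W=e^{-\tilde V y^{1-a}/(1-a)}U$ and its properties \eqref{zeron}, \eqref{weq1}, followed by an appeal to the Poon-type frequency approach of \cite[Section 2]{EFV} and \cite{ABDG} together with the regularity package of Lemma \ref{regular1} and Lemma \ref{holdery}; your writeup expands precisely this outline, with the same handling of the drift and lower-order terms and the same transfer back to $U$ via the bounded invertibility of \eqref{W}.
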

With the doubling property in Theorem \ref{db1} in hand, we now proceed with the proof of Theorem \ref{main}.

\begin{proof}[Proof of Theorem \ref{main}]The proof is divided into the following steps.

\emph{Step 1:} We first show that $|U|$ ( keeping in mind  that we are denoting $\bU^*$ in Lemma \ref{red1} by $U$) vanishes to infinite order at $(0,0)$ in the sense of \eqref{vp}. Now since $U$( i.e. $\bU^*$) is of the type
\begin{align*}
      \begin{pmatrix}
          \Tbu \\
          \tn{div}_x \Tbu
          \end{pmatrix},
\end{align*} 
and already by our assumption, $\Tbu$ vanishes to infinite order in $x$ and $t$ at $(0,0)$, therefore we only need to 
additionally show that $\Div_x \Tbu$ vanishes to infinite order in the tangential variables $x,t$ at $(0,0)$. It suffices to show that given $r>0$, 
\begin{equation}\label{vp1}
||\Div_x \Tbu||_{L^{2}(B_r \times [0, r^2))} = O(r^k),
\end{equation}
for all $k>0$.

 In order to show this, we use an idea based on an  interpolation type argument as in \cite{AB}.  See also \cite{Ru1} for the stationary case.

We first state the relevant interpolation inequality that we need which is Lemma 2.3 in \cite{AB}.
Given $s \in (0,1)$ and $f \in C^2_0(\R^n \times \R_+),$ there exists a universal constant $C$ such that for any $0<\eta <1$ the following holds
	\begin{align}\label{inte}
		||\nabla_x f||_{L^{2}(\R^n)} \leq C \eta^s\left( || y^{a/2} \nabla \nabla_x f||_{L^2(\R^n \times \R_+)} +|| y^{a/2} \nabla_x f||_{L^2(\R^n \times \R_+)}\right) + C\eta^{-1} ||f||_{L^2(\R^n)}.
	\end{align}
	
	Let $\phi$ be a smooth function of $X$ supported in ${\mathbb B_{4{r}}}$ such that $\phi \equiv 1$ in  
${\mathbb B_{r}}$.  We now apply the interpolation inequality \eqref{inte}  to $f=\phi \Tbu$ and  obtain for any $0<\eta <1$ that the following estimate holds
\begin{align}\label{der1}
	&||\n_x f||_{L^2(\R^{n} \times [0, r^2))}\\ &\le C\left( \eta^{s}(||y^{a/2}\n \n_x f||_{L^2(\R^{n} \times \R_+ \times [0,r^2))} +||y^{a/2}\n_xf||_{L^2(\R^{n} \times \R_+ \times [0, r^2))})
	 + \eta^{-1}||f||_{L^2(\R^n \times [0, r^2))} \right)\notag\\
	& \le C\big(\eta^{s}(|\phi|||y^{a/2}\n \n_x\Tbu||_{L^2(\mathbb Q_{2{r}}^+)} +(|\n\phi|+|\phi|)||y^{a/2}\n \Tbu||_{L^2(\mathbb Q_{2{r}}^+)}\notag\\
	& +(|\n^2 \phi|+|\n\phi|)||y^{a/2} \Tbu||_{L^2(\mathbb Q_{2{r}}^+)})+ \eta^{-1}||\Tbu||_{L^2( B_{2r} \times [0, 4r^2))} \big)\notag\\
	 & \le   \eta^{s}{r}^{-2}C ||y^{a/2} \Tbu||_{L^2(\mathbb Q_{4{r}}^+ )} +C \eta^{-1}||\Tbu||_{L^2( B_{2r} \times [0, 4r^2))},\notag
\end{align} 
where  in the last inequality, we used the rescaled versions of the regularity estimate in Lemma \ref{regular2}.  From \eqref{der1} it follows
\begin{align}\label{d1f}
	||\n_x \Tbu||_{L^2(B_r \times [0, r^2))} \le  C \eta^{s}{r}^{-2} ||y^{a/2} \Tbu||_{L^2(\mathbb Q_{4}^+ )} +C \eta^{-1}||\Tbu||_{L^2( B_{2r} \times [0, 4r^2)}.
\end{align}
Now since $\Tbu$ vanishes to infinite order in the sense of \eqref{vp}, we have that given $k \in \mathbb N$ large enough, there exists $C_{k,s}$ such that
\begin{equation}\label{d2f}
||\Tbu||_{L^2( B_{2r} \times [0, 4r^2)} \leq C_{k,s} r^{{2k/s}}.
\end{equation}
By letting $\eta= r^{k/s}$ and using \eqref{d2f} in \eqref{d1f} we find for a new $\tilde C_{k,s}$,
\begin{equation}\label{d4f}
||\n_x \Tbu||_{L^2(B_r \times [0, r^2))}  \leq  \tilde C_{k,s} r^{k-2}.
\end{equation}
Since $k$ is arbitrary, \eqref{vp1} follows.

\emph{Step 2:} From \emph{Step 1}, we obtain that $|U|$ vanishes to infinite order in the sense of \eqref{vp} at $(0,0)$ in the tangential  variables $x$ and $t$. We can now argue as in the proof of Theorem 1.1 in \cite{ABDG}.

We first show that  we must have
\begin{equation}\label{crucial}
U(X,0) \equiv 0, \ \ \ \ \ \ \ \text{for every}\ X\in \mathbb B_1^+.
\end{equation}
We argue by contradiction and assume that \eqref{crucial} is not true. Consequently, \eqref{asum} does hold. In particular, from \eqref{asum} and (i) in Theorem \ref{db1} it follows that $\int_{\mathbb B_r^+} |U(X, 0)|^2 y^a dX > 0$ for all $0<r \leq \frac{1}{2}$. From this fact and the continuity of $U$ up to the thin set $\{y=0\}$ we deduce that
\begin{equation}\label{nzero}
\int_{\mathbb Q_r^+} |U|^2 y^a dX dt>0, \end{equation} for all $0< r \leq 1/2$. Moreover, the  inequality (iii) in Theorem \ref{db1} holds, i.e. there exist $r_0$ and $C$ depending on $\theta$ in \eqref{theta} such that for all $r\leq r_0$ one has
\begin{equation}\label{bnj1}
\int_{\mathbb Q_r^+} |U|^2 y^a dX dt \leq C \int_{\mathbb Q_{r/2}^+} |U|^2 y^a dXdt.
\end{equation}
From this doubling estimate we can derive in a standard fashion the following inequality for all $r \leq \frac{r_0}{2}$
\begin{equation*}
\int_{\mathbb Q_r^+} |U|^2 y^a dX dt  \geq \frac{r^{L}}{C}   \int_{\mathbb Q_{r_0}^+} |U|^2 y^a dX dt,
\end{equation*}
where $L= \operatorname{log}_2 C$. Letting $c_0=\frac{1}{C}\int_{\mathbb Q_{r_0}^+} |U|^2 y^a dX dt$, and noting that $c_0>0$ in view of \eqref{nzero}, we can rewrite the latter inequality as
\begin{equation}\label{fvan1}
\int_{\mathbb Q_r^+} |U|^2 y^a dX dt  \geq c_0 r^{L}.
\end{equation}
Let now $r_j\searrow 0$ be a sequence such that $r_j \leq r_0$ for every $j\in \mathbb N$, and define
\[
U_j(X,t) = \frac{U(r_jX, r_j^2 t)}{\bigg(\frac{1}{r_j^{n+3+a}}\int_{\mathbb Q_{r_j}} |U|^2 y^a dX dt\bigg)^{1/2}}. 
\]
Note that on account of  \eqref{nzero} the functions $U_j$'s are well defined. Furthermore, a change of variable gives for every $j\in \mathbb N$
\begin{equation}\label{bound}
\int_{\mathbb Q_1^+} |U_j|^2 y^a dX dt=1.\end{equation}
Moreover from \eqref{bnj1} using change of variables we have\begin{equation}\label{nondeg}
\int_{\mathbb Q_{1/2}^+} |U_j|^2 y^a dX dt \geq C^{-1}.
\end{equation}
Moreover $U_j$ solves the following problem in $\mathbb Q_1^+$
\begin{equation}\label{expb1}
\begin{cases}
\Div( y^a \nabla U_j) + y^a \partial_t U_j= y^a r_j B \nabla U_j,
\\
\py U_j ((x,0),t) = r_j^{1-a} \tilde V(r_jx, r_j^2 t) U_j ((x,0),t).
\end{cases}
\end{equation}
From \eqref{bound} and the regularity estimates in Lemma \ref{regular1} and Lemma \ref{holdery} we infer that, possibly passing to a subsequence which we continue to indicate with $U_j$, we have $U_j \to U_0$ in $H^{\alpha}(\mathbb Q_{3/4}^+)$ up to $\{y=0\}$ and also $\py U_j ((x,0),t) \to \py U_0 ((x,0),t)$ uniformly in $\mathbb Q_{3/4}^+ \cap \{y=0\}$. Consequently, from \eqref{expb1} we infer that the blowup limit $U_0$ solves in $\mathbb Q_{3/4}^+$ 
\begin{equation}\label{expb2}
\begin{cases}
\Div( y^a \nabla U_0) + y^a \partial_t U_0=0,
\\
\py U_0 ((x,0), t) = 0.
\end{cases}
\end{equation}
We now observe that a change of variable and \eqref{fvan1} give
\[
\int_{B_1 \times [0,1)}  |U_j((x,0),t)|^2 dxdt \le c_0^{-1} r_j^{a+1-L} \int_{B_{r_j} \times [0, r_j^2)} |U((x,0),t)|^2 dx dt.
\]
Now using \eqref{vp1}  we infer that 
\[
\int_{B_1 \times [0,1)}  |U_j((x,0),t)|^2 dxdt  \to 0.
\]
Again, since $U_j \to U_0$ uniformly in $\mathbb Q_{1/2}^+$ up to $\{y=0\}$, we deduce that it must be $U_0 \equiv 0$ in $\mathbb Q_{1/2}^+ \cap \{y=0\}$. Moreover, since $U_0$ solves the problem \eqref{expb2}, we can now apply the weak unique continuation result in Proposition \ref{wucp}  to infer that  $U_0 \equiv 0$ in $\mathbb Q_{1/2}^+$. On the other hand, from the uniform convergence  of $U_j$'s in $\mathbb Q_{1/2}^+$ and  the non-degeneracy estimate \eqref{nondeg} we also have
\begin{equation}\label{nondeg1}
\int_{\mathbb Q_{1/2}^+} |U_0|^2 y^a dX dt \geq C^{-1},
\end{equation}
and thus $U_0\not\equiv 0$ in $\mathbb Q_{1/2}^+$. This contradiction leads to the conclusion that \eqref{crucial} must be true.  We now note that, away from the thin set $\{y=0\}$, $U$ solves a uniformly parabolic PDE with smooth coefficients and vanishes identically in the half-ball $\mathbb B_1^+$.  We can thus appeal to \cite[Theorem 1]{AV} to assert that $U$ vanishes to infinite order both in space and time in the sense of \eqref{vp} at every $(X,0)$ for $X \in \mathbb B_1$. At this point, we can  use the strong unique continuation  result  in   \cite[Theorem 1]{EF}  to finally conclude that $U(X,0) \equiv 0$ for $X\in \R^{n+1}_+$. The conclusion of the theorem thus follows. It is to be noted that although the  results in \cite{AV, EF} are stated for scalar equations, nevertheless the results are valid for systems which have decoupled principal part.

\end{proof}

\end{document}